\newtheorem{theorem}{Theorem}[section]
\newtheorem{lemma}[theorem]{Lemma}
\newtheorem{proposition}[theorem]{Proposition}
\theoremstyle{definition}
\newtheorem{definition}[theorem]{Definition}
\newtheorem{definition-proposition}[theorem]{Definition-Proposition}
\newtheorem{example}[theorem]{Example}
\def\C{\mathcal{C}}
\def\D{\mathcal{D}}
\def\H{\mathcal {H}}
\def\F{\mathcal{F}}
\def \text{\mbox}
\providecommand{\ind}{\mathop{\rm ind}\nolimits}%
\providecommand{\Hom}{\mathop{\rm Hom}\nolimits}%
\renewcommand{\mod}{\mathop{\rm mod}\nolimits}%
\providecommand{\dim}{\mathop{\rm dim}\nolimits}%
\providecommand{\Obj}{\mathop{\rm Obj}\nolimits}%
\begin{document}

\title{Grothendieck groups of repetitive cluster categories}

\author{Huimin Chang}
\address{
Department of Applied Mathematics,
The Open University of China,
100039 Beijing,
P. R. China
}
\email{changhm@ouchn.edu.cn}

\author{Dave Murphy}
\address{
Department of Computer Science\\
	University of Verona\\
	Strada le Grazie 15\\
	37134, Verona\\
	Italy}
\email{davidjordan.murphy@univr.it}

\author[Panyue Zhou]{Panyue Zhou$^\ast$}
\address{School of Mathematics and Statistics, Changsha University of Science and Technology, 410114 Changsha, Hunan,  P. R. China}
\email{panyuezhou@163.com (P.\hspace{1mm}Zhou)}

\thanks{$^\ast$Corresponding author.}
\begin{abstract}
 In order to study cluster-tilted algebras and their intermediate coverings, Zhu introduced the notion of repetitive cluster categories, defined as the orbit categories $\D^b(\H)/\langle(\tau^{-1}\Sigma)^p\rangle$ for $1\leq p\in\mathbb{N}$, where $\H$ is a hereditary abelian category with tilting objects. In this paper, we compute partial but essential results on the Grothendieck groups of the repetitive cluster categories $\D^b(\mod KA_n)/\langle(\tau^{-1}\Sigma)^p\rangle$ and $\D^b(\mod KD_n)/\langle(\tau^{-1}\Sigma)^p\rangle$. Our results extend the known computations for classical cluster categories, reveal new structural patterns arising from the repetitive parameter $p$, and provide further evidence of the close interplay between Grothendieck groups, Auslander-Reiten theory, and Coxeter transformations.
\end{abstract}

\subjclass[2020]{18E50; 18F30; 18G80}

\keywords{Grothendieck group; repetitive cluster category; cluster category; Coxeter transformation; Auslander-Reiten theory; covering functor}

\thanks{Huimin Chang is supported by the National Natural Science Foundation of China (Grant No.12301047). Dave Murphy is partially supported by the project LAVIE: Large Views of Smal l Phenomena: Decompositions, Localizations, and Representation Type, FIS00001706, funded by Fondo Italiano per la Scienza FIS-2021.
Panyue Zhou is supported by the National Natural Science Foundation of China (Grant No. 12371034) and by the Scientific Research Fund of Hunan Provincial Education Department (Grant No. 24A0221).}
\maketitle

\section{Introduction}
Cluster categories were introduced by Buan, Marsh, Reineke, Reiten, and Todorov \cite{BMRRT} as a categorical model for the cluster algebras of Fomin and Zelevinsky. For Dynkin type $A$, cluster categories were independently introduced by Caldero, Chapoton, and Schiffler \cite{CCS}, who provided an explicit combinatorial model based on the diagonals of a regular polygon for type $A_n$. In general, cluster categories are defined as the orbit categories $\D^b(H)/\langle F \rangle$, where $\D^b(H)$ is the bounded derived category of a hereditary algebra $H$, and $F = \tau^{-1}\Sigma$ is an automorphism generated by the composition of the inverse Auslander-Reiten translation $\tau^{-1}$ and the shift functor $\Sigma$ of $\D^b(H)$.

To study cluster-tilted algebras and their intermediate coverings, Zhu \cite{Z} introduced \emph{repetitive cluster categories}, defined as the orbit categories $\D^b(\H)/\langle(\tau^{-1}\Sigma)^p\rangle$ for $1 \leq p \in \mathbb{N}$, where $\H$ is a hereditary abelian category with tilting objects. Subsequently, Lamberti \cite{L} provided a geometric model for
$$\C_{n,\hspace{0.5mm}p}^A := \D^b(\mod KA_n)/\langle(\tau^{-1}\Sigma)^p\rangle$$ by interpreting it via diagonals in the so-called repetitive polygon $\Pi^p$. This construction generalizes the model of Caldero, Chapoton, and Schiffler for the case $p=1$ \cite{CCS}.
More recently, Gubitosi \cite{G} extended this geometric approach to type $D_n$ by showing that the repetitive cluster category
$$\C_{n,\hspace{0.5mm}p}^D := \D^b(\mod KD_n)/\langle(\tau^{-1}\Sigma)^p\rangle$$ (for $1 \leq p \in \mathbb{N}$) is equivalent to a category defined on a subset of tagged edges in a regular punctured polygon, thereby generalizing Schiffler's construction \cite{S} for $p=1$.
These developments provide deeper insights into the geometric realizations of repetitive cluster categories. A distinctive feature of these categories is that they possess a fractional Calabi-Yau dimension $\frac{2p}{q}$, which cannot be simplified. This fractional dimension marks a key difference between the categories $\C_{n,p}^A$ and $\C_{n,p}^D$ for general $p$ and their classical counterparts when $p=1$, reflecting subtle changes in both their geometric configurations and algebraic properties as $p$ increases. The fractional nature of the Calabi-Yau dimension arises from the periodic action of the automorphism $(\tau^{-1}\Sigma)^p$ and plays a crucial role in understanding the homological structure of these categories.

The Grothendieck group is an important numerical invariant associated with a category.
In a triangulated category $\C$, the isomorphism class of an object $X$ is denoted by $[X]$, and the free abelian group generated by all such isomorphism classes $[X]$ is denoted by $F$.
The split Grothendieck group $K_0^{\mathrm{sp}}(\C)$ is defined as the quotient of $F$ by the subgroup generated by all elements of the form $[X \oplus Y] - [X] - [Y]$, where $X$ and $Y$ are objects in $\C$.
Meanwhile, the Grothendieck group $K_0(\C)$ is defined as the quotient of $K_0^{\mathrm{sp}}(\C)$ by the subgroup generated by elements of the form $[\delta] = [X] + [Z] - [Y]$, where $\delta$ is a distinguished triangle of the form
  $X\rightarrow Y\rightarrow Z\rightarrow \Sigma X.$

The Grothendieck groups of triangulated categories have been extensively studied.
Barot, Kussin, and Lenzing \cite{BKL} investigated the Grothendieck group of cluster categories by analyzing the action of the Coxeter transformation on their derived categories.
Xiao and Zhu \cite{XZ} examined the relations in the Grothendieck group of a triangulated category of finite type and showed that these relations are generated by all Auslander-Reiten triangles.
Palu \cite{P} computed the Grothendieck groups for certain $2$-Calabi-Yau triangulated categories and established a generalization of the Fomin-Zelevinsky mutation rule.
Fedele \cite{F} extended these results to higher-dimensional cases, generalizing the work of Xiao and Zhu as well as Palu.
More recently, Murphy \cite{M} computed the Grothendieck groups for each family of discrete cluster categories of Dynkin type $A_{\infty}$, originally introduced by Igusa and Todorov \cite{IT}.

In this paper, we
compute partial but essential results on the Grothendieck groups of $\C_{n,p}^A$ and $\C_{n,p}^D$.
These categories arise as repetitive analogues of cluster categories of Dynkin type $A_n$ and $D_n$, and their Grothendieck groups provide important algebraic invariants encoding the homological and combinatorial structure of the categories.
Our results generalize previous computations for classical cluster categories and reveal new patterns in the interaction between Auslander–Reiten theory, triangulated structures, and Coxeter transformations.

The paper is structured as follows.
In Section 2, we review the necessary background on repetitive cluster categories of type $A_n$ and $D_n$, including their construction, fundamental properties, and the definition of the Grothendieck group. This section also recalls key results from the literature that motivate our approach and situates our work in the context of earlier computations for classical cluster categories.
Section 3 contains the proofs of our main results, where we explicitly compute the Grothendieck groups of $\C_{n,p}^A$ and $\C_{n,p}^D$ and analyze how their structure reflects both the repetitive parameter $p$ and the combinatorics of the underlying Dynkin types.
These results extend previous computations in the finite case and highlight new phenomena specific to the repetitive setting.


\subsection*{Conventions}
In this paper, we assume that $K$ denotes an algebraically closed field. We also assume that every subcategory of a category is full and closed under taking isomorphisms, finite direct sums, and direct summands. Hence, any subcategory can be determined by its indecomposable objects.
For  a triangulated category $\C$, we use $\ind\C$ to denote the set of indecomposable objects of $\C$,  $\Sigma$ to denote the shift functor of $\C$, and $\tau$  to denote the Auslander-Reiten translation of $\C$.

\section{Preliminaries}
Let $\C$ be a triangulated category and $F\colon \C\rightarrow \C$ be an autoequivalence. The orbit category $\mathcal O_{F}:= \C/\langle F\rangle$ has the same objects as $\C$ and its morphisms from $X$ to $Y$ are in bijection with
$$\bigoplus_{i\in\ \mathbb{Z}} \mathrm{Hom}_{\C}(X,F^{i}Y).$$
In general, the orbit category is not triangulated \cite{K}. However, there is a sufficient set of conditions such that the orbit category is triangulated. Let $\C=D^b(\H)$ be the bounded derived category of a hereditary abelian $K$-category $\H$, and $F\colon \C\rightarrow \C$ be an autoequivalence.  Assume that the following hypotheses hold:
\begin{itemize}
  \item [(g1)] For each indecomposable $U$ of $\H$, only finitely many objects $F^iU, i\in\mathbb{Z}$,
lie in $\H$.
  \item [(g2)] There is an integer $N\geq 0$ such that the $F$-orbit of each indecomposable of
$\C$ contains an object $\Sigma^n U$, for some $0\leq n\leq N$ and some indecomposable object $U$ of $\H$.
\end{itemize}
Then the orbit category $\C/\langle F\rangle$ admits a natural triangulated structure such that
the projection functor $\pi\colon\C\rightarrow\C/\langle F\rangle$ is triangulated. In particular, the orbit category $\C/\langle F\rangle$ is called a cluster category when $F=\tau^{-1}\Sigma$, denoted by $\C_{n}$, and the orbit category $\C/\langle F\rangle$ is called a repetitive cluster category when $F=(\tau^{-1}\Sigma)^p$, denoted by $\C_{n,p}$.

The class of objects in $\C_{n,p}$ is the same as the class of objects in $\C$ and the space of morphisms is given by
$$\Hom_{\C_{n,p}}(X,Y)=\bigoplus_{i\in\ \mathbb{Z}} \Hom_{\C}(X,(\tau^{-p}\Sigma^p)^{i}Y).$$
Observe that when $p=1$, one gets the classical cluster category $\C_{n}$. Furthermore, there is a natural projection functor $\pi_p\colon\C\rightarrow\C_{n,p}$, and the projection functor $\pi_p$ is simply denoted by $\pi$ if $p=1$. Moreover, one can define the projection functor $\eta_p\colon\C_{n,p}\rightarrow\C_{n}$ which sends an object $X$ in $\C_{n,p}$ to an object $X$ in $\C_{n}$, and $\phi\colon X\rightarrow Y$ in $\C_{n,p}$ to the morphism $\phi\colon X\rightarrow Y$ in $\C_{n}$. It is easy to check that $\pi=\eta_p\circ\pi_p$, i.e. the following diagram commutes:
$$
{
\xymatrix@-7mm@C-0.01cm{
     &&\C \ar[rdd]_{\pi_p}\ar[rr]^{\pi} & & \C_{n} \\
 \\
 &&&\C_{n,p} \ar[ruu]_{\eta_p} \\
 \\
}
}
$$
Thus, the repetitive cluster categories $\C_{n,p}$ serve as intermediate categories between the cluster categories $\C_{n}$ and derived categories $\C$.
\vspace{1mm}

We recall the concepts of the Serre functor and fractional Calabi-Yau triangulated categories, which play a central role in the study of orbit categories and repetitive cluster categories.

\begin{definition}
A $K$-linear triangulated category $\C$ has a Serre functor if it is equipped with an auto-equivalence $\nu\colon \C\rightarrow \C$ together with bifunctorial isomorphisms
$$D\Hom_{\C}(X,Y)\cong\Hom_{\C}(Y,\nu X),$$
for each $X,Y\in\C$. Moreover, if $\C$ admits a Serre functor, we say that $\C$ has Serre duality.
\end{definition}
Note that for a hereditary abelian category $\H$, if $\H$ has tilting objects, then $\H$ has Serre duality, and also Auslander-Reiten translate $\tau$ (shortly AR-translate) \cite{HRS}. Thus, a Serre functor $\nu$ of $\C$ exists, it is unique up to isomorphism and $\nu=\tau\Sigma$.
\begin{definition}
A triangulated category $\C$ with Serre functor $\nu$ is called $\frac{m}{n}$-Calabi-Yau ($\frac{m}{n}$-CY) for $n,m>0$ provided there is an isomorphism of triangle functors
$$\nu^n\cong \Sigma^m$$
\end{definition}
Note that a $\frac{m}{n}$-CY category is also  $\frac{mt}{nt}$-CY, $t\in\mathbb{Z}$. However, the converse is not always true, this means the fraction cannot be simplified in general.

In the following, we summarize some known facts about $\C_{n,p}$ from \cite{L}.
\begin{proposition} \cite[Proposition 3.3]{Z} and \cite[Lemma 2.7]{L} The following statements hold for $\C_{n,p}$.
\begin{enumerate}
\item The projection functors $\pi_p\colon\C\rightarrow\C_{n,p}$ and $\eta_p\colon\C_{n,p}\rightarrow\C_{n}$ are triangle functors.
\item The category $\C_{n,p}$ is triangulated with Serre functor $\tau\Sigma$ induced from $\D$.
\item The category $\C_{n,p}$ is a Krull-Schmidt and $\frac{2p}{p}$-CY category, i.e.
$$(\tau\Sigma)^p\cong \Sigma^{2p}.$$
\item  $\ind(\C_{n,p})=\bigcup_{i=1}^{p}(\ind(F^i\C_n))$, where $F=\tau^{-1}\Sigma$.
\end{enumerate}
\end{proposition}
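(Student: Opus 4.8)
The plan is to derive all four assertions from the general theory of triangulated orbit categories recalled above, feeding in the two special features of the present setting: $\H = \mod KA_n$ or $\H = \mod KD_n$ is of Dynkin type, and the classical cluster category $\C_n = \C/\langle F\rangle$ with $F = \tau^{-1}\Sigma$ is already understood \cite{BMRRT}. First I would verify that $F^p$ satisfies the hypotheses (g1) and (g2), so that Keller's criterion \cite{K} equips $\C_{n,p} = \C/\langle F^p\rangle$ with a triangulated structure for which $\pi_p$ is a triangle functor; this yields the triangulatedness in (2) and the part of (1) concerning $\pi_p$. Condition (g1) for $F^p$ is immediate from (g1) for $F$, since the $F^p$-orbit of an indecomposable lies inside its $F$-orbit. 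For (g2): because $\H$ is hereditary, every indecomposable of $\C$ is $\Sigma^m U$ for a unique $m \in \mathbb Z$ and $U \in \ind\H$; since $\Sigma$ raises $m$ by $1$ while $\tau^{-1}$ changes $m$ by a bounded non-negative amount (using that $\tau$ is periodic modulo $\Sigma$ on $\D^b(\H)$ for $\H$ of Dynkin type, and that $\ind\H$ is finite), the degree $m$ is strictly increasing with bounded increments along the $F^p$-orbit of any indecomposable, and hence meets an interval $[0,N']$ with $N' = N'(n,p)$. This is (g2) for $F^p$.

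For the remaining half of (1), I would use that by the construction of the triangulated orbit category \cite{K} every distinguished triangle of $\C_{n,p}$ is, up to isomorphism, the $\pi_p$-image of a distinguished triangle of $\C$: applying $\pi = \eta_p \circ \pi_p$ (a triangle functor, by the commutative diagram above) to such a lift, and recalling that $\eta_p$ strictly commutes with $\Sigma$, one sees that $\eta_p$ sends distinguished triangles to distinguished triangles, so it too is a triangle functor. For the Serre functor in (2), the finiteness verified above makes $\C_{n,p}$ $\Hom$-finite over $K$ (in $\Hom_{\C_{n,p}}(X,Y) = \bigoplus_i \Hom_\C(X,F^{pi}Y)$ only finitely many summands are non-zero, each finite-dimensional), and $\C = \D^b(\H)$ carries the Serre functor $\nu = \tau\Sigma$ by \cite{HRS}; as $\nu$ is essentially unique it commutes with $F^p$, so applying $D(-)$ termwise to that direct sum, then Serre duality in $\C$, then reindexing $i \mapsto -i$, produces bifunctorial isomorphisms $D\Hom_{\C_{n,p}}(X,Y)\cong\Hom_{\C_{n,p}}(Y,\nu X)$. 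Hence $\nu = \tau\Sigma$, induced from $\D^b(\H)$, is a Serre functor of $\C_{n,p}$.

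Assertion (3) then follows. The category $\C_{n,p}$ is Krull--Schmidt by the same argument as for classical cluster categories \cite{BMRRT}, being $\Hom$-finite over $K$ and idempotent-complete. For the Calabi--Yau property, on any orbit category $\C/\langle G\rangle$ the endofunctor induced by $G$ is canonically isomorphic to the identity; applied with $G = F^p = \tau^{-p}\Sigma^p$ this gives $\tau^p \cong \Sigma^p$ in $\C_{n,p}$, whence $\nu^p = (\tau\Sigma)^p \cong \tau^p\Sigma^p \cong \Sigma^{2p}$, i.e. $\C_{n,p}$ is $\tfrac{2p}{p}$-Calabi--Yau. I would stress that this does not give $\nu \cong \Sigma^2$: that would amount to $F = \tau^{-1}\Sigma \cong \mathrm{id}$ on $\C_{n,p}$, which holds only for $p = 1$; for $p > 1$ only the $p$-th power of $F$ has been inverted, which is precisely why $\tfrac{2p}{p}$ cannot be simplified.

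For (4), fix a set $\mathcal D \subseteq \ind\C$ of representatives of the $F$-orbits, so that $\pi$ induces a bijection from $\mathcal D$ onto $\ind\C_n$. The hypotheses (g1), (g2) ensure, as in the classical case, that $\pi_p$ preserves indecomposability and that $\pi_p X \cong \pi_p Y$ iff $Y \cong F^{pk}X$ for some $k$, so $\ind\C_{n,p}$ is in bijection with the set of $F^p$-orbits in $\ind\C$. The degree argument from (g2) also shows $F$ acts freely on $\ind\C$ (the degree strictly increases, so $F^kX \cong X$ forces $k = 0$), hence each $F$-orbit splits into exactly $p$ distinct $F^p$-orbits, represented by $X, FX, \dots, F^{p-1}X$; therefore $\bigcup_{i=0}^{p-1} F^i\mathcal D$ --- equivalently, modulo $F^p$, $\bigcup_{i=1}^{p} F^i\mathcal D$ --- is a complete irredundant system of representatives of $\ind\C_{n,p}$, which is the assertion $\ind(\C_{n,p}) = \bigcup_{i=1}^{p}\ind(F^i\C_n)$. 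The only steps with content beyond the orbit-category formalism are the uniform bound in (g2) for $F^p$ and the indecomposability/orbit-bijection input to (4); both reduce to the finite Dynkin combinatorics of $\D^b(\mod KA_n)$ and $\D^b(\mod KD_n)$ and the facts already available for $\C_n$, and I expect the bookkeeping behind (g2) for $F^p$ to be the most delicate point.
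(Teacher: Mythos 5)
Your proposal is correct, but note that the paper does not prove this proposition at all: it is quoted as known, with the proofs residing in \cite{Z} (Proposition 3.3) and \cite{L} (Lemma 2.7), and those proofs run along essentially the same lines as yours --- Keller's criterion (g1)--(g2) for $F^p$, the fact that triangles in the orbit category are induced from $\D^b(\H)$ (giving $\eta_p$ via $\pi=\eta_p\circ\pi_p$), descent of the Serre functor $\nu=\tau\Sigma$, the relation $\tau^p\cong\Sigma^p$ in $\C_{n,p}$ for the $\frac{2p}{p}$-CY property, and the splitting of each $F$-orbit into $p$ distinct $F^p$-orbits for the fundamental-domain statement (4). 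The only cosmetic quibble is your justification that $\tau^{-1}$ raises the cohomological degree by a bounded non-negative amount: the clean reason is heredity, namely $\tau^{-1}$ preserves $\ind\H$ except on injectives, where $\tau^{-1}I_i\cong\Sigma P_i$, so $F$ raises the degree by $1$ or $2$; your conclusion (and the freeness of the $F$-action used in (4)) is unaffected.
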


\subsection{Repetitive cluster categories of type $A_{n}$}
Let $Q$ be a quiver of underlying Dynkin type $A_n$. We denote by the vertices of $Q$ with $1,2,\cdots, n$ and the arrows are $i\rightarrow i+1$ ($i=1, \cdots , n-1$), see Figure~\ref{quiverAn}.
\vspace{-2mm}

\begin{figure}[H]
\begin{center}
\[\xymatrix@R0pt@C14pt{
 n
& (n-1)\ar[l]
&\cdots \ar[l]
&2\ar[l]
&1\ar[l].
}
\]
\caption{The quiver of type  $A_n$}
\label{quiverAn}
\end{center}
\end{figure}
\vspace{-2mm}

Let $\C_A=D^b(\mod KA_n)$ be the bounded derived category of $K$-category $\mod KA_n$. Then $\C_{n,p}^A=D^b(\mod KA_n)/\langle(\tau^{-1}\Sigma)^p\rangle$ is the repetitive cluster categories of type $A_n$. When $n=1$, $\C_{n}^A=D^b(\mod KA_n)/\langle(\tau^{-1}\Sigma)\rangle$ is the cluster categories of type $A_n$.
Let $\F=\F_1$ be the fundamental domain for the functor $\tau^{-1}\Sigma$ in $\C_A$ given by the isoclasses of indecomposable objects in $\C_n^A$, and $\F_i$ be the ($\tau^{-1}\Sigma)^{i-1}$-shifts of $\F$ with $1\leq i\leq p$. We can draw the fundamental domain for the functor $\tau^{-p}\Sigma^p$ as in Figure \ref{01}.
\begin{figure}[H]
\begin{center}
\begin{tikzpicture}
\draw (-3,-1)-- (0,-1)--(-1,1)--(-2,1)--(-3,-1);
\draw (-0.8,1)--(2.2,1)--(1.2,-1)--(0.2,-1)--(-0.8,1);
\node at (2.4,0){$\cdots$};
\node at (-1.5,0){$\F$};
\node at (0.7,0){$\F_2$};
\node at (4.2,0){$\F_{p-1}$};
\node at (6.4,0){$\F_{p}$};
\draw (2.7,1)--(5.7,1)--(4.7,-1)--(3.7,-1)--(2.7,1);
\draw (4.9,-1)-- (7.9,-1)--(6.9,1)--(5.9,1)--(4.9,-1);
\end{tikzpicture}
\end{center}
\caption{Partition of the fundamental domain of $\tau^{-p}\Sigma^p$ for an odd value $p$}
\label{01}
\end{figure}

\begin{example}
Let $n=3,p=3$. We draw the AR-quiver of $\C_{3,3}^A$ as shown in Figure \ref{03},  which also can be found in \cite[Example 3.9]{C}. Every indecomposable object in the AR-quiver of $\C_{3,3}^A$ is denoted by a so-called ``diagonal", related to the geometric model of $\C_{3,3}^A$, see Section 3 in \cite{C} for details.
\begin{figure}[h]
\begin{tikzpicture}[scale=0.8,
fl/.style={->,shorten <=6pt, shorten >=6pt,>=latex}]
\coordinate (13) at (0,0) ;
\coordinate (14) at (1,1) ;
\coordinate (15) at (2,2) ;
\coordinate (24) at (2,0) ;
\coordinate (25) at (3,1) ;
\coordinate (26) at (4,2) ;
\coordinate (35) at (4,0) ;
\coordinate (36) at (5,1) ;
\coordinate (37) at (6,2) ;
\coordinate (46) at (6,0) ;
\coordinate (47) at (7,1) ;
\coordinate (48) at (8,2) ;
\coordinate (57) at (8,0) ;
\coordinate (58) at (9,1) ;
\coordinate (59) at (10,2) ;
\coordinate (68) at (10,0) ;
\coordinate (69) at (11,1) ;
\coordinate (610) at (12,2) ;
\coordinate (79) at (12,0) ;
\coordinate (710) at (13,1) ;
\coordinate (711) at (14,2) ;
\coordinate (810) at (14,0) ;
\coordinate (811) at (15,1) ;
\coordinate (812) at (16,2) ;
\coordinate (911) at (16,0) ;
\coordinate (912) at (17,1) ;
\coordinate (913) at (18,2) ;
\coordinate (1012) at (18,0) ;
\coordinate (1013) at (19,1) ;
\coordinate (1014) at (20,2) ;
\coordinate (1113) at (20,0) ;
\coordinate (1114) at (21,1) ;
\coordinate (1115) at (22,2) ;

\draw[fl] (13) -- (14) ;
\draw[fl] (14) -- (15) ;
\draw[fl] (14) -- (24) ;
\draw[fl] (15) --(25) ;
\draw[fl] (24) --(25) ;
\draw[fl] (25) --(35) ;
\draw[fl] (25) --(26) ;
\draw[fl] (35) --(36) ;
\draw[fl] (36) --(37) ;
\draw[fl] (26) --(36) ;
\draw[fl] (46) --(47) ;
\draw[fl] (47) --(48) ;
\draw[fl] (37) --(47) ;
\draw[fl] (36) --(46) ;
\draw[fl] (57) --(58) ;
\draw[fl] (58) --(59) ;
\draw[fl] (48) --(58) ;
\draw[fl] (47) --(57) ;
\draw[fl] (58) --(68) ;
\draw[fl] (68) --(69) ;
\draw[fl] (69) --(610) ;
\draw[fl] (59) --(69) ;
\draw[fl] (79) --(710) ;
\draw[fl] (710) --(711) ;
\draw[fl] (610) --(710) ;
\draw[fl] (69) --(79) ;
\draw[fl] (810) --(811) ;
\draw[fl] (811) --(812) ;
\draw[fl] (711) --(811) ;
\draw[fl] (710) --(810) ;
\draw[fl] (911) --(912) ;
\draw[fl] (912) --(913) ;
\draw[fl] (812) --(912) ;
\draw[fl] (811) --(911) ;
\draw[fl] (1012) --(1013) ;
\draw[fl] (913) --(1013) ;
\draw[fl] (912) --(1012) ;
\draw[fl] (1013) --(1113) ;
\draw (13) node[scale=0.5] {(1,3,1)} ;
\draw (14) node[scale=0.5] {(1,4,1)} ;
\draw (15) node[scale=0.5] {(1,5,1)} ;
\draw (24) node[scale=0.5] {(2,4,1)} ;
\draw (25) node[scale=0.5] {(2,5,1)} ;
\draw (26) node[scale=0.5] {(2,6,1)} ;
\draw (35) node[scale=0.5] {(3,5,1)} ;
\draw (36) node[scale=0.5] {(3,6,1)} ;
\draw (37) node[scale=0.5] {(1,3,2)} ;
\draw (46) node[scale=0.5] {(4,6,1)} ;
\draw (47) node[scale=0.5] {(1,4,2)} ;
\draw (48) node[scale=0.5] {(2,4,2)} ;
\draw (57) node[scale=0.5] {(1,5,2)} ;
\draw (58) node[scale=0.5] {(2,5,2)} ;
\draw (59) node[scale=0.5] {(3,5,2)} ;
\draw (68) node[scale=0.5] {(2,6,2)} ;
\draw (69) node[scale=0.5] {(3,6,2)} ;
\draw (610) node[scale=0.5] {(4,6,2)} ;
\draw (79) node[scale=0.5] {(1,3,3)} ;
\draw (710) node[scale=0.5] {(1,4,3)} ;
\draw (711) node[scale=0.5] {(1,5,3)} ;
\draw (810) node[scale=0.5] {(2,4,3)} ;
\draw (811) node[scale=0.5] {(2,5,3)} ;
\draw (812) node[scale=0.5] {(2,6,3)} ;
\draw (911) node[scale=0.5] {(3,5,3)} ;
\draw (912) node[scale=0.5] {(3,6,3)} ;
\draw (913) node[scale=0.5] {(1,3,1)} ;
\draw (1012) node[scale=0.5] {(4,6,3)} ;
\draw (1013) node[scale=0.5] {(1,4,1)} ;
\draw (1113) node[scale=0.5] {(1,5,1)} ;
\draw[thick, dashed, blue] (-1,-0.5) -- (7,-0.5) -- (4,2.5) -- (1.5,2.5) --cycle ;
\draw[thick, dashed, blue] (8,-0.5) -- (10.5,-0.5) -- (13,2.5) -- (5,2.5) --cycle ;
\draw[thick, dashed, blue] (11,-0.5) -- (19.5,-0.5) -- (16.5,2.5) -- (14,2.5) --cycle ;
\end{tikzpicture}
\caption{The AR-quiver of $\C_{3,3}^A$. The fundamental domains $\F_1,\F_2$ and $\F_3$ are encircled by dashed lines}
\label{03}
\end{figure}
\end{example}

\subsection{Repetitive cluster categories of type $D_{n}$}
Let $Q$ be a quiver of underlying Dynkin type $D_n$. We denote by the vertices of $Q$ with $0,1,\cdots, n-1$ and the arrows are $i-1\rightarrow i$ ($i=2, \cdots , n-1$) together with $0\rightarrow 2$, see Figure~\ref{quiverdn}.
\begin{figure}[H]
\begin{center}
\[\xymatrix@R0pt@C14pt{
&&&&&& 0 \ar[dl]\\
& (n-1)
&(n-2)\ar[l]
&\cdots \ar[l]
&3\ar[l]
&2\ar[l]\\
&&&&&& 1\ar[lu].
}
\]
\caption{The quiver of type  $D_n$}
\label{quiverdn}
\end{center}
\end{figure}
\vspace{-2mm}
Let $\C_D=D^b(\mod KD_n)$ be the bounded derived category of $K$-category $\mod KD_n$. Then $\C_{n,p}^D=D^b(\mod KD_n)/\langle(\tau^{-1}\Sigma)^p\rangle$ is the repetitive cluster categories of type $D_n$. When $n=1$, $\C_{n}^D=D^b(\mod KD_n)/\langle(\tau^{-1}\Sigma)\rangle$ is the cluster categories of type $D_n$.
Let $\F=\F_1$ be the fundamental domain for the functor $\tau^{-1}\Sigma$ in $\C_D$ given by the isoclasses of indecomposable objects in $\C_n^D$, and $\F_i$ be the ($\tau^{-1}\Sigma)^{i-1}$-shifts of $\F$ with $1\leq i\leq p$. We can draw the fundamental domain for the functor $\tau^{-p}\Sigma^p$ as in Figure \ref{funddom}.
\begin{figure}[H]
  \centering
\begin{tikzpicture}[scale=1.2]

\newcommand{\symbendblock}[4]{%
  \path (0.6,0) coordinate (A)
        (2.4,0) coordinate (B)
        ({2.4 + #2},#3) coordinate (C)
        ({2.4 + #2},#4) coordinate (D)
        ({0.6 + #2},#4) coordinate (E)
        ({0.6 + #2},#3) coordinate (F);
  \filldraw[fill=blue!10, draw=blue!80, thick]
    (A) -- (B) -- (C) -- (D) -- (E) -- (F) -- cycle;
  \pgfmathsetmacro{\cx}{(0.6+2.4+(2.4+#2)+(2.4+#2)+(0.6+#2)+(0.6+#2))/6}
  \pgfmathsetmacro{\cy}{(0+0+#3+#4+#4+#3)/6}
  \node at (\cx,\cy) {#1};
}

\begin{scope}[shift={(0,0)}]
  \symbendblock{$\mathcal{F}_1$}{1.0}{-1.8}{-3.0}
\end{scope}

\begin{scope}[shift={(2.2,0)}]
  \symbendblock{$\mathcal{F}_2$}{1.0}{-1.8}{-3.0}
\end{scope}

\node at (6.5,-1.6) {\huge $\cdots$};

\begin{scope}[shift={(6.2,0)}]
  \symbendblock{~~$\mathcal{F}_{p-1}$}{1.0}{-1.8}{-3.0}
\end{scope}

\begin{scope}[shift={(8.4,0)}]
  \symbendblock{$\mathcal{F}_p$}{1.0}{-1.8}{-3.0}
\end{scope}
\end{tikzpicture}
  \caption{Partition of the fundamental domain of $\tau^{-p}\Sigma^p$}\label{funddom}
\end{figure}
\begin{example}
When $n=3$ and $p=2$, we give an AR-quiver of $\C_{3,2}^D$ as shown in Figure \ref{funddom2},  which can also be found Figure 4 in \cite{G}. Every indecomposable object in the AR-quiver of $\C_{3,2}^D$ is expressed through the geometric model of $\C_{3,2}^D$, see Section 4 in \cite{G} for details.
\begin{figure}[H]
\begin{center}

$$\hspace{-4mm}{\small\xymatrix@C=7mm@-6mm{
&(0,2)\ar[dr]\ar[ddr]\ar@{.}[rr] & &
 (1,2)\ar[dr]\ar[ddr]\ar@{.}[rr] & &
 (2,2)\ar[dr]\ar[ddr]\ar@{.}[rr] & &
 (3,2)\ar[dr]\ar[ddr]\ar@{.}[rr] & &
 (4,2)\ar[dr]\ar[ddr]\ar@{.}[rr] & &
 (5,2)\ar[dr]\ar[ddr]\ar@{.}[rr] & &
 (0,2)\ar[dr]\ar[ddr] \\
 & & (0,1)\ar[ur]\ar@{.}[rr] & &
 (1,1)\ar[ur]\ar@{.}[rr] & &
 (2,1)\ar[ur]\ar@{.}[rr] & &
 (3,1)\ar[ur]\ar@{.}[rr] & &
 (4,1)\ar[ur]\ar@{.}[rr] & &
 (5,1)\ar[ur]\ar@{.}[rr] & &
 (0,1) \\
 & & (0,0)\ar[uur]\ar@{.}[rr] & &
 (1,0)\ar[uur]\ar@{.}[rr] & &
 (2,0)\ar[uur]\ar@{.}[rr] & &
 (3,0)\ar[uur]\ar@{.}[rr] & &
 (4,0)\ar[uur]\ar@{.}[rr] & &
 (5,0)\ar[uur]\ar@{.}[rr] & &
 (0,0)
}}
$$
\caption{The AR-quiver of $\C_{3,2}^D$}
\label{funddom2}
\end{center}
\end{figure}
\end{example}
\subsection{Grothendieck group}
Now we recall the definition of Grothendieck group in a triangulated category $\C$ and some related results.

$\mathbf{Notations}$: For an object $X$ in $\C$, we use $[X]$ to denote the isoclass of $X$ and $F$ to denote the free abelian group generated by the isoclasses $[X]$ of objects $X$ in $\C$.
\begin{definition}[\cite{F,XZ}]\label{13}
By using the same notations as above, we define the split Grothendieck group of $\C$ to be
$$K_0^{sp}(\C):=F/\langle[X\oplus Y]-[X]-[Y]\rangle,$$
where $X,Y\in\C$. We define the Grothendieck group of $\C$ to be
$$K_0(\C):=K_0^{sp}(\C)/\langle[X]-[Y]+[Z]|X\rightarrow Y\rightarrow Z\rightarrow \Sigma X \mathrm{\; is\;a\;triangle\;in\;}\C\rangle.$$
\end{definition}
The relations of the Grothendieck groups of triangulated categories we considered in this paper can be deduced from AR-triangles. We recall the following definition and related result.
\begin{definition}\label{22}
A triangulated category $\C$ is said to be of finite type provided  $$\sum_{X\in\Obj\;\C}\dim_K\Hom_{\C}(X,Y)<\infty\hspace{5mm}\mbox{and}~~ \sum_{X\in\Obj\;\C}\dim_K\Hom_{\C}(Y,X)<\infty$$ for any object $Y$ in $\C$. Moreover,
 $\C$ is called a finite triangulated category if it contains only finitely many indecomposable objects up to isomorphisms.
\end{definition}
Note that a finite triangulated category is a triangulated category of finite type by Definition \ref{22}. A triangulated category of finite type has AR-triangles, and we have the following result.
\begin{lemma}\cite[Theorem 2.1]{XZ}\label{21}
Let $\C$ be a triangulated category of finite type. Then $K_0(\C)$ is the quotient of $K_0^{sp}(\C)$ by the subgroup generated by $[\delta]=[X]+[Z]-[Y]$ for all AR-triangles.
\end{lemma}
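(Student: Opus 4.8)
Write $R_{\mathrm{ar}}\le K_0^{\mathrm{sp}}(\C)$ for the subgroup generated by $[\delta]:=[X]-[Y]+[Z]$ over all AR-triangles $\delta\colon X\to Y\to Z\to\Sigma X$, and $R_{\mathrm{tr}}$ for the one generated by all triangles, so that $K_0(\C)=K_0^{\mathrm{sp}}(\C)/R_{\mathrm{tr}}$ by Definition~\ref{13}. The inclusion $R_{\mathrm{ar}}\subseteq R_{\mathrm{tr}}$ is automatic, so the whole point is the reverse one: I would show that $[\delta]\in R_{\mathrm{ar}}$ for \emph{every} triangle $\delta\colon X\xrightarrow{f}Y\xrightarrow{g}Z\xrightarrow{h}\Sigma X$. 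A split triangle ($h=0$) gives $Y\cong X\oplus Z$ and $[\delta]=0$ already in $K_0^{\mathrm{sp}}(\C)$, so one may always assume $h\neq0$. The plan is then to run an induction built from two applications of the octahedral axiom.

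First I would reduce to the case that $Z$ is indecomposable. If $Z=Z_1\oplus Z_2$ with both summands nonzero, complete $h|_{Z_1}$ to a triangle $\delta_1\colon X\to Y_1\to Z_1\to\Sigma X$ and apply the octahedral axiom to $Z_1\hookrightarrow Z\xrightarrow{h}\Sigma X$; this produces a triangle $\delta_2\colon Y_1\to Y\to Z_2\to\Sigma Y_1$ with $[\delta]=[\delta_1]+[\delta_2]$ in $K_0^{\mathrm{sp}}(\C)$. As $Z_1$ and $Z_2$ have strictly fewer indecomposable summands than $Z$, this reduces the problem (by induction on the number of summands of the third term) to $Z$ indecomposable.

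Next, with $Z$ indecomposable and $h\neq0$, the map $g$ cannot be a retraction (else $h=0$), so — using that $\C$ is of finite type and hence has AR-triangles — $g$ factors through the right almost split map $b\colon E\to Z$ of the AR-triangle $\delta_{\mathrm{ar}}\colon\tau Z\xrightarrow{a}E\xrightarrow{b}Z\xrightarrow{c}\Sigma\tau Z$ ending at $Z$, say $g=bg'$. Applying the octahedral axiom to $Y\xrightarrow{g'}E\xrightarrow{b}Z$ (composite $g$, with $\mathrm{cone}(g)\cong\Sigma X$ and $\mathrm{cone}(b)\cong\Sigma\tau Z$) and setting $W:=\mathrm{cone}(g')$, one gets triangles $\delta_a\colon Y\to E\to W\to\Sigma Y$ and $\delta_b\colon X\to\tau Z\to W\to\Sigma X$, and a direct computation gives the identity $[\delta]=[\delta_b]-[\delta_a]+[\delta_{\mathrm{ar}}]$ in $K_0^{\mathrm{sp}}(\C)$. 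Since $[\delta_{\mathrm{ar}}]\in R_{\mathrm{ar}}$, this shows $[\delta]\equiv[\delta_b]-[\delta_a]\pmod{R_{\mathrm{ar}}}$.

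The hard part will be making this into a genuine induction. The last step only rewrites $[\delta]$ in terms of $[\delta_a]$ and $[\delta_b]$, whose third term $W=\mathrm{cone}(g')$ need not be indecomposable, so the two reductions have to be interleaved; one must therefore exhibit a complexity measure on triangles that \emph{strictly decreases} under the octahedral step of the indecomposable case (the third-term reduction then only lowering the number of summands at a fixed value of this measure, so that a lexicographic induction closes). This is exactly where finiteness of type is essential: it supplies a bounded such measure — for instance $\dim_K\Hom_\C(Z,\Sigma X)$, or, after the first reduction, the number of indecomposable summands of the middle term — and, using $g=bg'$ with $b$ radical together with the fact that only finitely many indecomposables map nontrivially into a fixed object, one checks it drops at each octahedral step. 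With that bookkeeping in place one obtains $R_{\mathrm{tr}}\subseteq R_{\mathrm{ar}}$, hence $K_0(\C)=K_0^{\mathrm{sp}}(\C)/R_{\mathrm{ar}}$, as claimed.
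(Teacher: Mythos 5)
The paper does not prove this lemma at all: it is quoted from Xiao and Zhu \cite{XZ}, so the only thing to compare your attempt with is that original argument, whose general shape (reduce to an indecomposable third term, factor through almost split maps, apply the octahedral axiom, and close an induction using finiteness) your outline does mirror. The steps you actually carry out are correct: split triangles contribute no relation; the octahedron applied to $Z_1\hookrightarrow Z\xrightarrow{h}\Sigma X$ gives $[\delta]=[\delta_1]+[\delta_2]$ with both new third terms having fewer summands; and for $Z$ indecomposable with $h\neq 0$, factoring $g=bg'$ through the right almost split map and applying the octahedron to $Y\xrightarrow{g'}E\xrightarrow{b}Z$ does produce triangles $\delta_a\colon Y\to E\to W\to\Sigma Y$ and $\delta_b\colon X\to\tau Z\to W\to\Sigma X$ with $[\delta]=[\delta_b]-[\delta_a]+[\delta_{\mathrm{ar}}]$ in $K_0^{\mathrm{sp}}(\C)$; the cancellation checks out.

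The genuine gap is exactly the point you flag and then leave open: nothing in the proposal shows that iterating these two moves terminates, and that termination is the entire content of the theorem --- it is precisely where the finite type hypothesis must be used. Neither candidate measure you mention works as stated. The number of indecomposable summands of the middle term can grow (the AR middle term $E$ may have more summands than $Y$, and $W$ is uncontrolled), and $\dim_K\Hom_{\C}(Z,\Sigma X)$ does not visibly bound the corresponding quantities for $\delta_a$ and $\delta_b$, whose third term is $W$ rather than anything extracted from $Z$; since $W$ must then itself be decomposed, the proposed lexicographic scheme has no proven decreasing coordinate. A complete argument has to use local finiteness quantitatively --- for instance by measuring how deep the relevant morphisms lie in the radical, noting that almost split factorizations push them into strictly higher radical powers, and invoking the vanishing $\mathrm{rad}^N(-,Z)=0$ for $N$ large, which holds in a category of finite type; some input of this kind is the heart of \cite[Theorem 2.1]{XZ}. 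Until you exhibit such a measure and prove it strictly decreases (or is bounded and strictly increases) along both octahedral moves, what you have is a correct reduction scheme, not a proof of the lemma.
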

\begin{lemma}\cite[Proposition 3.5]{BKL}\label{36}
Suppose $\C_{m}^A$ is the cluster category of finite Dynkin type $A_m$ and $\C_{m}^D$ is the cluster category of finite Dynkin type $D_m$, suppose $K_0(\C_{m}^A)$ is the Grothendieck group of $\C_m^A$ and $K_0(\C_{m}^D)$ is the Grothendieck group of $\C_m^D$. Then we have
\[ K_0(\C_{m}^A)=\left\{
\begin{array}{cc}
0  &\text{if} \ m\ \text{is}\ \text{even},\\
\mathbb{Z}  &\text{if} \ m\ \text{is}\  \text{odd},
\end{array}
\right.
\]
and
\[ K_0(\C_{m}^D)=\left\{
\begin{array}{cc}
\mathbb{Z}^2  &\text{if} \ m\ \text{is}\ \text{even},\\
\mathbb{Z}  &\text{if} \ m\ \text{is}\  \text{odd}.
\end{array}
\right.
\]
\end{lemma}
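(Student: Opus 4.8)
Write $\C_m$ for $\C_m^A$ or $\C_m^D$ and $Q$ for the corresponding Dynkin quiver (Figures~\ref{quiverAn} and \ref{quiverdn}), so $K_0(\mod KQ)=\mathbb{Z}^m$ with basis $[S_1],\dots,[S_m]$. I plan to establish, uniformly in both types, an isomorphism
\[
K_0(\C_m)\;\cong\;\mathbb{Z}^m\big/\big\langle\,[P_j]-[I_j]\;:\;1\le j\le m\,\big\rangle\;=\;\mathrm{coker}\!\big(\mathbf 1+\Phi\big),
\]
where $P_j,I_j$ are the indecomposable projective and injective at vertex $j$ and $\Phi$ is the Coxeter matrix of $Q$; here the second equality uses that the Serre functor $\tau\Sigma$ of $\D^b(\mod KQ)$ acts on $K_0$ as $-\Phi$ (so $[I_j]=-\Phi[P_j]$) and that the $[P_j]$ form a $\mathbb{Z}$-basis of $\mathbb{Z}^m$ (the Cartan matrix of $KQ$ has determinant $\pm1$). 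Then I would read off $\mathrm{coker}(\mathbf 1+\Phi)$ for $A_m$ and $D_m$ directly from the dimension vectors.

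\textbf{The isomorphism.} Both $\C_m^A$ and $\C_m^D$ have finitely many indecomposables, hence are finite triangulated categories and so of finite type (Definition~\ref{22}); by Lemma~\ref{21}, $K_0(\C_m)$ is then $K_0^{sp}(\C_m)$ modulo the relations $[X]+[Z]-[Y]=0$ of its AR-triangles $X\to Y\to Z\to\Sigma X$. Since $\C_m$ is $2$-Calabi-Yau (the $p=1$ case of the Proposition above), $\tau_{\C_m}\cong\Sigma$ and its Serre functor $\nu\cong\Sigma^2$, induced from $\tau\Sigma$ on $\D^b(\mod KQ)$, satisfies $\nu P_j\cong I_j$; as $[\Sigma X]=-[X]$ gives $[\Sigma^2 X]=[X]$, we obtain $[P_j]=[\Sigma^2 P_j]=[\nu P_j]=[I_j]$ in $K_0(\C_m)$. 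The triangle functor $\pi\colon\D^b(\mod KQ)\to\C_m$ induces $q\colon\mathbb{Z}^m=K_0(\mod KQ)\to K_0(\C_m)$, $[M]\mapsto[\pi M]$; it is surjective (every indecomposable of $\C_m$ is a module or a shift $\Sigma P_i$, and $[\Sigma P_i]=-[P_i]$) and kills each $[P_j]-[I_j]$, so induces a surjection $\overline q\colon\mathrm{coker}(\mathbf 1+\Phi)\twoheadrightarrow K_0(\C_m)$. For the inverse I would use the assignment $[\pi Z]\mapsto\overline{[Z]}$ for $Z\in\D^b(\mod KQ)$, where $\overline{[Z]}$ denotes the class of $[Z]\in K_0(\D^b(\mod KQ))=\mathbb{Z}^m$ in $\mathrm{coker}(\mathbf 1+\Phi)$: this is well defined on $K_0^{sp}(\C_m)$ because $\tau^{-1}\Sigma$ acts on $K_0(\D^b)$ as $-\Phi^{-1}$, which agrees with $\mathbf 1$ modulo $\mathrm{im}(\mathbf 1+\Phi)$; and it annihilates every AR-triangle relation, since each AR-triangle of $\C_m$ is the $\pi$-image of an AR-triangle $\tau_{\D^b}X\to E\to X\to\Sigma\tau_{\D^b}X$ of $\D^b(\mod KQ)$, whose class-combination $[\tau_{\D^b}X]+[X]-[E]$ already vanishes in $K_0(\D^b)$. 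Thus the assignment descends to a homomorphism $K_0(\C_m)\to\mathrm{coker}(\mathbf 1+\Phi)$ inverse to $\overline q$ on generators.

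\textbf{The explicit cokernels.} For $A_m$ oriented as in Figure~\ref{quiverAn}: $[P_j]=[S_j]+\cdots+[S_m]$ and $[I_j]=[S_1]+\cdots+[S_j]$. Subtracting consecutive relations $[P_j]=[I_j]$ forces $[S_j]=-[S_{j+1}]$ for $1\le j\le m-1$, and the single remaining relation reads $\bigl(\sum_{k=1}^{m-1}(-1)^{k-1}\bigr)[S_1]=0$, i.e.\ it is vacuous when $m$ is odd and gives $[S_1]=0$ when $m$ is even; hence $K_0(\C_m^A)\cong\mathbb{Z}$ for $m$ odd and $0$ for $m$ even. For $D_m$ oriented as in Figure~\ref{quiverdn}: $[P_0]=[S_0]+\sum_{k=2}^{m-1}[S_k]$, $[P_1]=[S_1]+\sum_{k=2}^{m-1}[S_k]$, $[P_j]=\sum_{k=j}^{m-1}[S_k]$ and $[I_j]=[S_0]+[S_1]+\sum_{k=2}^{j}[S_k]$ for $j\ge2$, with $[I_0]=[S_0]$ and $[I_1]=[S_1]$. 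The relations at $j=0$ and $j=1$ coincide and give $\sum_{k=2}^{m-1}[S_k]=0$; consecutive differences of the relations for $j\ge2$ give $[S_k]=(-1)^{k}[S_2]$; and combining these leaves the single relation $[S_0]+[S_1]+[S_2]=0$ when $m$ is even (so $K_0(\C_m^D)\cong\mathbb{Z}^2$) and the relations $[S_0]+[S_1]=0$, $[S_2]=0$ when $m$ is odd (so $K_0(\C_m^D)\cong\mathbb{Z}$). This gives the lemma.

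\textbf{Main obstacle.} The crux is the isomorphism --- precisely, injectivity of $\overline q$, i.e.\ that the relations $[P_j]=[I_j]$ generate \emph{all} relations of $K_0(\C_m)$. This relies on the structural input that every AR-triangle of the orbit category is the $\pi$-image of an AR-triangle of $\D^b(\mod KQ)$ (a standard feature of cluster categories, cf.\ \cite{BMRRT}); verifying it needs care near the boundary of the fundamental domain, where an AR-triangle of $\C_m$ may mix a module summand with a shifted-projective summand, together with a consistent choice of representative for each $\tau^{-1}\Sigma$-orbit. Once that is in place the rest is routine --- in particular no Smith-normal-form argument is needed, because the defining relations of $\mathrm{coker}(\mathbf 1+\Phi)$ telescope.
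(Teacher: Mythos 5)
Your proposal is correct and follows essentially the same route as the paper: identify $K_0$ of the cluster category with $\mathrm{coker}(1+\phi)$ on $K_0(D^b(\mod KQ))$ (the $p=1$ case of the exact sequences the paper states at the start of Section 3, from which it recovers this lemma in Lemma \ref{Lem: n odd, p equiv 1}, Lemma \ref{Lem: n even, p equiv 1} and Proposition \ref{DLem: p equiv 1}), and then do the elementary linear algebra, which in your write-up checks out in both types. The only real difference is that you supply a proof of the cokernel identification (via Xiao--Zhu's AR-triangle theorem, Lemma \ref{21}, together with the standard fact that AR-triangles of the orbit category lift to AR-triangles of $D^b$), a step the paper asserts without detail and otherwise delegates to the citation of Barot--Kussin--Lenzing, and that you phrase the relations as $[P_j]-[I_j]$ rather than through the $\tau$-action on simples, which is equivalent after telescoping.
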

\section{The Grothendieck group of $\C_{n,p}$}

In this section, let  $H_n=KQ_n$ be a finite-dimensional algebra with $Q_n$ a Dynkin quiver with $n$ vertices. Let $\mod H_n$ denote the category of finitely generated modules, and let $D^b(\mod H_n)$ be its bounded derived category with shift functor $\Sigma$ and translation functor $\tau$.

Let $\C_{n,p}=D^b(\mod H_n)/(\tau^{-1}\Sigma)^p$ be the repetitive cluster categories. When $n=1$, $\C_{n}$ is the cluster category. Let $K_0(D^b(\mod H_n))$ be the Grothendieck group of $D^b(\mod H_n)$, and let $\phi: K_0(D^b(\mod H_n))\rightarrow K_0(D^b(\mod H_n))$ be the Coxeter transformation. For any object $X\in D^b(\mod H_n)$,
$$\phi([X])=[\tau X].$$
Then $\phi^p: K_0(D^b(\mod H_n))\rightarrow K_0(D^b(\mod H_n))$ satisfies
$$\phi^p([X])=[\tau^p X].$$

This induces short exact sequences
$$K_0(D^b(\mod H_n))\xrightarrow{1+\phi^p} K_0(D^b(\mod H_n))\xrightarrow{\pi_p} K_0(\C_{n,p})\rightarrow 0,$$
when $p$ is odd, and
$$K_0(D^b(\mod H_n))\xrightarrow{1-\phi^p} K_0(D^b(\mod H_n))\xrightarrow{\pi_p} K_0(\C_{n,p})\rightarrow 0,$$
when $p$ is even, where $\pi_p$ is induced by the covering functor $\pi_p:D^b(\mod H_n)\rightarrow \C_{n,p}$.

It is easy to show $K_0(\C_{n,p})\cong \text{Coker}(1+\phi^p)=K_0(D^b(\mod H_n))/\text{Im}(1+\phi^p)$ when $p$ is odd, and $K_0(\C_{n,p})\cong \text{Coker}(1-\phi^p)=K_0(D^b(\mod H_n))/\text{Im}(1-\phi^p)$ when $p$ is even.

Let $\{[X_1], \ldots, [X_m]\}$ be a basis of $K_0(D^b(\mod H_n))$. Then it suffices to check the action of $(1 \pm \phi^p)$ on the basis elements $\{[X_1], \ldots, [X_m]\}$.
Indeed, for any $M \in K_0(D^b(\mod H_n))$, we have
\[
(1 \pm \phi^p)[M] = [M] \pm [\tau^p M] = \sum_{i=1}^m \alpha_i [X_i] \pm \sum_{i=1}^m \alpha_i[\tau^p X_i] = \sum_{i=1}^m \alpha_i (1 \pm \phi^p)[X_i],
\]
where $[M] = \sum_{i=1}^m \alpha_i [X_i]$ and $\alpha_i \in \mathbb{Z}$ for all $1 \leq i \leq m$.
\vspace{2mm}

For a basic finite-dimensional $K$-algebra $H_n$, it is shown in \cite{Gr} that $K_0(H_n)$ and $K_0(D^b(\mod H_n))$ are isomorphic, where $K_0(H_n)$ is the Grothendieck group of $H_n$.

Throughout, we write $\langle x_1, \ldots, x_m \mid f(x_1,\ldots,x_m) \rangle$ for the quotient of the free abelian group of rank $m$ by the subgroup generated by $f(x_1,\ldots,x_m)$.

\subsection{The Grothendieck group of $\C_{n,p}^A$}

Throughout this section we consider the basis of $K_0(D^b(\mod (A_n)))$ to be ${[S_1], [S_2], \ldots, [S_n]}$, where $S_i \in \mod (A_n) \subset D^b(\mod (A_n))$ corresponds to the simple module at vertex $i \in Q_0$.
Whenever we write $[S_i]$, we always take $i \in \mod (n+1)$, where $[S_0]$ is defined in Lemma~\ref{Lem: Si in Im}.

We also consider the quiver $A_n$ to be linearly orientated as in Figure \ref{quiverAn}. The Auslander-Reiten quiver of $D^b(\mod (A_n))$ is shown in Figure \ref{00}, where the simple objects in $\mod (A_n)$ lie on the thick, red line in the bottom centre.

\begin{figure}[H]
	\centering
	\begin{tikzpicture}[scale = 1.45]
		\foreach[count = \i] \txt in {1,...,5}
		\draw (-6.9 + 2*\i,0) -- (-6 + 2*\i,1.9) -- (-5.1 + 2*\i, 0) -- (-6.9 + 2*\i,0);
		
		\foreach[count = \i] \txt in {1,...,4}
		\draw (-5.9 + 2*\i, 2) -- (-4.1 + 2*\i, 2) -- (-5 + 2*\i, 0.1) -- (-5.9 + 2*\i, 2);
		
		\draw[ultra thick,red] (-0.9,0) -- (0.9,0);
		
		\node at (0,0.7) {\footnotesize $\mathcal{M}$};
		\node at (1,1.3) {\footnotesize $\Sigma \mathcal{M}$};
		\node at (2,0.7) {\footnotesize $\Sigma^2 \mathcal{M}$};
		\node at (3,1.3) {\footnotesize $\Sigma^3 \mathcal{M}$};
		\node at (4,0.7) {\footnotesize $\Sigma^4 \mathcal{M}$};
		\node at (-1,1.3) {\footnotesize $\Sigma^{-1} \mathcal{M}$};
		\node at (-2,0.7) {\footnotesize $\Sigma^{-2} \mathcal{M}$};
		\node at (-3,1.3) {\footnotesize $\Sigma^{-3} \mathcal{M}$};
		\node at (-4,0.7) {\footnotesize $\Sigma^{-4} \mathcal{M}$};
		
		\node at (-5,1) {$\cdots$};
		\node at (5,1) {$\cdots$};
	\end{tikzpicture}
	\caption{The Auslander-Reiten quiver of $D^b(\mod (A_n))$, where each component is equivalent to the Auslander-Reiten quiver of $\mathcal{M} = \mod (A_n)$.}
\label{00}
\end{figure}

\begin{lemma}\label{Lem: Si in Im}
	Suppose $0 < p \in \mathbb{Z}$.
	Then, for all $i =1,\ldots,n$, we have
$$(1 + (-1)^{p+1} \phi^p)[S_i] = [S_i] + (-1)^{p+1} [S_{i+p}],$$
where $i+p \in \mathbb{Z}/(n+1)\mathbb{Z}$, and $[S_0] = - \sum_{i=1}^{n} [S_i]$.
\end{lemma}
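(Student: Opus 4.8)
The plan is to reduce everything to a statement about the Coxeter transformation $\phi$ acting on $K_0(D^b(\mod A_n))$ in the basis of simples $[S_1],\ldots,[S_n]$, and to track how $\tau^p$ (equivalently $\phi^p$) permutes simple modules up to the boundary correction coming from the identity $\sum_{i=1}^n[S_i] = -[S_0]$. First I would recall the explicit description of $\tau$ on $\D^b(\mod A_n)$ for the linear orientation of Figure~\ref{quiverAn}: on the AR-quiver pictured in Figure~\ref{00}, the translate $\tau$ moves an indecomposable one step to the left within the fundamental domain $\mathcal M = \mod A_n$, and when an indecomposable falls off the left edge it re-enters the next copy $\Sigma^{-1}\mathcal M$. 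In terms of classes in $K_0$, the key point is that $\tau$ acts on the \emph{simple} modules in a particularly clean way: one checks (either from the mesh relations / AR-triangles, or from the standard fact that for linearly oriented $A_n$ one has $\tau^{-1} S_i$ expressible via the canonical exact sequences) that $[\tau S_i]$ is again, up to sign, a simple class. The cleanest bookkeeping is to introduce the formal symbol $[S_0] := -\sum_{i=1}^n [S_i]$ and to observe that with this convention the assignment $[S_i] \mapsto [\tau S_i]$ becomes exactly the ``rotation'' $[S_i] \mapsto [S_{i-1}]$ (indices mod $n+1$), possibly up to a global sign that I will pin down below; hence $[\tau^p S_i] = [S_{i-p}]$, equivalently $\phi^p[S_i] = \pm[S_{i+p}]$ with indices in $\mathbb Z/(n+1)\mathbb Z$.

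The second step is to fix the sign. The Coxeter transformation on $D^b(\mod A_n)$ has order $n+1$ on the nose only up to the shift-twist; more precisely the relation $\tau = \Sigma^{-1}\nu$ with $\nu$ the Serre functor, together with $\nu^{n+1} \cong \Sigma^{n-1}$ (the fractional Calabi--Yau property of $\D^b(\mod A_n)$, which is $\frac{n-1}{n+1}$-CY), forces $\phi^{n+1}$ to act on $K_0$ as multiplication by $(-1)^{n-1}$, since $\Sigma$ acts as $-1$ on any Grothendieck group. Combined with the rotation description this determines the sign in $\phi^p[S_i] = (\text{sign})\,[S_{i+p}]$: tracking one full period shows the sign per step is such that $\phi[S_i] = -[S_{i+1}]$ is \emph{not} quite right in general, so I would instead verify directly on small cases and then argue the sign is absorbed correctly into the claimed formula $(1 + (-1)^{p+1}\phi^p)[S_i] = [S_i] + (-1)^{p+1}[S_{i+p}]$. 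Concretely, it suffices to prove $\phi^p[S_i] = [S_{i+p}]$ in $K_0(D^b(\mod A_n))$ (with the $[S_0]$ convention); then both the $p$ odd case (coefficient $1+\phi^p$) and $p$ even case (coefficient $1-\phi^p$) are subsumed in the single displayed equation, since $(1+(-1)^{p+1}\phi^p)[S_i] = [S_i] + (-1)^{p+1}[S_{i+p}]$ is literally $[S_i]$ plus $(-1)^{p+1}$ times $\phi^p[S_i]$.

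So the heart of the argument is the identity $\phi^p[S_i] = [S_{i+p}]$ in $K_0(D^b(\mod A_n)) \cong \mathbb Z^n$ with basis $[S_1],\ldots,[S_n]$ and the convention $[S_0] = -\sum_{j=1}^n[S_j]$. For this I would use that the Cartan matrix of linearly oriented $A_n$ is the upper-triangular all-ones matrix $C$, that the Coxeter matrix is $\Phi = -C^{-T}C$, and simply compute: $C^{-1}$ is the bidiagonal matrix with $1$ on the diagonal and $-1$ on the superdiagonal, and a direct matrix multiplication shows $\Phi[S_i] = [S_{i-1}]$ for $i \geq 2$ while $\Phi[S_1] = -\sum_{j=1}^n[S_j] = [S_0]$ — which is exactly the rotation $i \mapsto i-1$ mod $n+1$ on the extended index set. (Here one must be slightly careful about whether $\phi = [\tau\,\cdot\,]$ corresponds to $\Phi$ or $\Phi^{-1}$; the paper's convention $\phi([X]) = [\tau X]$ pins it down, and I would match the direction so that the index shifts by $+p$ as claimed.) Iterating $p$ times gives the result immediately, and the boundary wrap-around is handled automatically by the cyclic index convention. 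The main obstacle I anticipate is purely the sign/direction bookkeeping: getting $\phi$ versus $\phi^{-1}$ right and confirming that no stray $(-1)$ survives, which I would settle by checking $n=1,2,3$ explicitly against Figure~\ref{00} and Lemma~\ref{36} before asserting the general formula.
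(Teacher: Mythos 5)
Your skeleton is the same as the paper's: adjoin the symbol $[S_0]:=-\sum_{i=1}^n[S_i]$, show that $\phi$ acts on the extended set $\{[S_0],[S_1],\ldots,[S_n]\}$ as a cyclic rotation, and iterate to get $\phi^p[S_i]=[S_{i+p}]$, after which the displayed formula is a triviality. The paper establishes the rotation by reading $\tau$ off the AR quiver: $\tau S_i\cong S_{i+1}$ for $i\leq n-1$, $\tau S_n\cong\Sigma^{-1}P_1$ so that $[\tau S_n]=-[P_1]=[S_0]$, and $\tau\Sigma^{-1}P_1\cong\Sigma^{-2}S_1$ to continue the cycle. The genuine gap in your proposal is that the direction of the rotation --- which is the only real content of the lemma once the rotation picture is in place --- is never established. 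Your explicit computation outputs $[S_i]\mapsto[S_{i-1}]$, you correctly note this could be $\phi$ or $\phi^{-1}$, and you then plan to ``match the direction so that the index shifts by $+p$ as claimed'': that is assuming the conclusion. Under the paper's fixed conventions (arrows $i\to i+1$ as in Figure~\ref{quiverAn}, and $\phi([X])=[\tau X]$) the direction is a fact to be verified, not a choice: for linearly oriented $A_n$ one has $\tau$ acting on interval modules by $[a,b]\mapsto[a+1,b+1]$, hence $\tau S_i\cong S_{i+1}$, so the rotation goes in the $+1$ direction and the map you actually computed is $\phi^{-1}$. Verifying this shift (together with the wrap-around $[\tau S_n]=[S_0]$) is precisely what a proof must contain.

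Moreover, the two concrete sign assertions you do make are incorrect and would derail the bookkeeping. First, from $\nu^{n+1}\cong\Sigma^{n-1}$ and $\tau=\Sigma^{-1}\nu$ one gets $\tau^{n+1}\cong\Sigma^{-2}$ (this identity is used later in the paper, e.g.\ in Lemma~\ref{Lem: n odd, p,q equiv}), so $\phi^{n+1}=\mathrm{id}$ on $K_0$, not multiplication by $(-1)^{n-1}$; you applied the shift-sign to $\nu$ rather than to $\tau$. In particular no sign ever appears: $\phi[S_i]=+[S_{i+1}]$ cyclically, and the hedged alternative $\phi[S_i]=-[S_{i+1}]$ is simply false. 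Second, the matrix recipe as written ($\Phi=-C^{-T}C$ applied on the left to the column vectors $[S_i]$, with $C$ the upper-triangular all-ones matrix) does not produce the rotation you state; that formula belongs to the row-vector (right-action) convention, and on columns one needs $\Phi=-C^{T}C^{-1}$ with the Cartan matrix matched to the paper's orientation. The matrix route is perfectly viable and would give a clean proof, but these convention mismatches are exactly the sign/direction bookkeeping the lemma asks you to settle, and the proposal leaves them unsettled (the suggestion to ``check $n=1,2,3$'' would reveal the correct direction, but it is not carried out and is not presented as part of the argument).
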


\begin{proof}
	When $p=1$, then $(1 + \phi)[S_i] = [S_i] + [\tau S_i]$ for all $i =1,\ldots,n$.
	However, $\tau$ acts on indecomposable objects in $D^b(\mod H_n)$ by taking them to the indecomposable object immediately to the left in the Auslander-Reiten quiver.
	Therefore, $\tau S_i \cong S_{i+1}$ for all $i = 1,\ldots,n-1$, and $\tau S_n \cong \Sigma^{-1} P_1$.
	The result when $p=1$ then follows from the fact that $[\Sigma^{-1} P_1] = \sum_{i=1}^n [\Sigma^{-1} S_i] = - \sum_{i=1}^n [S_i]$.
	
	Noting that $\tau \Sigma^{-1} P_1 \cong \Sigma^{-2} S_1$, our claim now follows via $[\tau^p S_i] = [S_{i+p}]$, where $i+p \in \mathbb{Z}/(n+1)\mathbb{Z}$, and $[S_0] = - \sum_{i=1}^{n} [S_i]$.
\end{proof}

\subsubsection{When $n$ is odd}

\begin{lemma}\label{Lem: n odd, p,q equiv}
	Let $p,q \geq 1$ such that $p \equiv q \mod (n+1)$. Then $K_0(C_{n,p}^A) \cong K_0(C_{n,q}^A)$.
\end{lemma}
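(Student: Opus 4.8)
The plan is to show that, under the hypothesis $p\equiv q\pmod{n+1}$ and the standing assumption that $n$ is odd, the two groups $K_0(\C_{n,p}^A)$ and $K_0(\C_{n,q}^A)$ are presented by literally the same generators and relations, so that no isomorphism needs to be constructed by hand. First I would invoke the discussion at the start of Section~3: $K_0(\C_{n,p}^A)\cong\mathrm{Coker}\bigl(1+(-1)^{p+1}\phi^p\bigr)$, the cokernel of an endomorphism of $K_0(D^b(\mod KA_n))$, which is free of rank $n$ on the classes $[S_1],\dots,[S_n]$. As explained there, it suffices to record the action of $1+(-1)^{p+1}\phi^p$ on this basis, and Lemma~\ref{Lem: Si in Im} gives exactly that:
$$\bigl(1+(-1)^{p+1}\phi^p\bigr)[S_i]=[S_i]+(-1)^{p+1}[S_{i+p}],\qquad i=1,\dots,n,$$
with indices read in $\mathbb{Z}/(n+1)\mathbb{Z}$ and $[S_0]=-\sum_{j=1}^n[S_j]$. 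Hence
$$K_0(\C_{n,p}^A)\cong\big\langle\, x_1,\dots,x_n \ \big|\ x_i+(-1)^{p+1}x_{i+p}\ \ (i=1,\dots,n)\,\big\rangle,$$
where $x_i$ is the image of $[S_i]$ and the index $i+p$ is taken modulo $n+1$, with $x_0:=-\sum_{j=1}^n x_j$; and similarly for $q$.

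The second step is the numerical comparison of the two presentations. Since $n$ is odd, $n+1$ is even, so $p\equiv q\pmod{n+1}$ forces $p\equiv q\pmod 2$ and therefore $(-1)^{p+1}=(-1)^{q+1}$; call this common sign $\varepsilon\in\{\pm1\}$. Moreover, again because $p\equiv q\pmod{n+1}$, for each $i$ the indices $i+p$ and $i+q$ coincide in $\mathbb{Z}/(n+1)\mathbb{Z}$, hence $x_{i+p}=x_{i+q}$ in both presentations. Consequently the defining relation $x_i+(-1)^{p+1}x_{i+p}$ is word-for-word the relation $x_i+(-1)^{q+1}x_{i+q}=x_i+\varepsilon x_{i+q}$, for every $i=1,\dots,n$. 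Thus the two abelian groups are given by identical presentations, and therefore $K_0(\C_{n,p}^A)\cong K_0(\C_{n,q}^A)$.

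I do not expect any real obstacle here: the entire content of the lemma is the observation that the parity of the exponent in $1+(-1)^{p+1}\phi^p$ is controlled modulo $n+1$ when $n$ is odd, together with the fact that the relevant indices already live in $\mathbb{Z}/(n+1)\mathbb{Z}$. The one point that genuinely requires the hypothesis "$n$ odd" — and the only place where a careless argument could go wrong — is the sign matching $(-1)^{p+1}=(-1)^{q+1}$; if $n$ were even this could fail (e.g.\ $p$ and $q=p+n+1$ would have opposite parity), which is precisely why the lemma is placed in the "$n$ odd" subsubsection. I would flag this explicitly in the write-up so the role of the parity assumption is transparent.
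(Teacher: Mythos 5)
Your proposal is correct and follows essentially the paper's own route: both arguments reduce to showing $\mathrm{Im}(1\pm\phi^p)=\mathrm{Im}(1\pm\phi^q)$, using the $(n+1)$-periodicity of the $\tau$-action on $K_0(D^b(\mod KA_n))$ together with the sign matching $(-1)^{p+1}=(-1)^{q+1}$, which is exactly where the hypothesis that $n$ is odd (so $n+1$ is even) enters. The only cosmetic difference is that the paper checks $(1\pm\phi^p)[X]=(1\pm\phi^q)[X]$ for arbitrary objects $X$ directly from the natural isomorphism $\tau^{n+1}\cong\Sigma^{-2}$, whereas you compare the generating relations on the basis of simples via Lemma~\ref{Lem: Si in Im}.
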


\begin{proof}
	In the derived category $D^b(\mod {A_n})$, we have the natural isomorphism $\tau^{n+1} \xrightarrow{\sim} \Sigma^{-2}$.
	We note that, as $n$ is odd, then $n+1$ is even, and so $p$ is even if and only if $q$ is even.
	
	Consider the case when $p$ is even, and $p = q +a(n+1)$ for some $a \in \mathbb{Z}$.
	Then we have $(1 - \phi^p)[X] = [X] - [\tau^p X] = [X] - [\tau^{q + a(n+1)}X] = [X] - [\Sigma^{-2a} (\tau^q X)] = [X] - [\tau^q X] = (1 - \phi^q)[X]$ for all $X \in D^b(\mod {A_n})$.
	Therefore $\mathrm{Im}(1 - \phi^p) = \mathrm{Im}(1 - \phi^q)$.
	
	An analogous argument shows that when $p$ is odd, then $\mathrm{Im}(1 + \phi^p) = \mathrm{Im}(1 + \phi^q)$, and so $K_0(C_{n,p}^A) \cong K_0(C_{n,q}^A)$ whenever $p \equiv q \mod (n+1)$.
\end{proof}

\begin{lemma}\label{Lem: n odd, p equiv 0}
	Let $n$ be odd, and let $p \equiv 0 \mod (n+1)$.
	Then $K_0(C_{n,p}^A) \cong \mathbb{Z}^n$.
\end{lemma}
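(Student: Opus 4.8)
The plan is to reduce to the case $p=n+1$ and then observe that the endomorphism defining the relevant cokernel is simply zero. Since $n$ is odd, $n+1$ is even, and because $p$ is a positive multiple of $n+1$ it is itself even; hence the short exact sequence governing $K_0$ is the one for even $p$, so that $K_0(\C_{n,p}^A)\cong\operatorname{Coker}(1-\phi^p)=K_0(D^b(\mod A_n))/\operatorname{Im}(1-\phi^p)$.

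Next I would compute the action of $\phi^p$ on $K_0(D^b(\mod A_n))$. Recall from the proof of Lemma~\ref{Lem: n odd, p,q equiv} the natural isomorphism of triangle functors $\tau^{n+1}\xrightarrow{\ \sim\ }\Sigma^{-2}$ on $D^b(\mod A_n)$. Writing $p=a(n+1)$ with $a\geq 1$, this gives $\phi^p([X])=[\tau^{a(n+1)}X]=[\Sigma^{-2a}X]$ for every $X\in D^b(\mod A_n)$. Since the shift functor acts as $-1$ on the Grothendieck group of a bounded derived category, $[\Sigma^{-2a}X]=[X]$, so $\phi^p$ is the identity on $K_0(D^b(\mod A_n))$. (Alternatively, one may first invoke Lemma~\ref{Lem: n odd, p,q equiv} with $q=n+1$ to replace $p$ by $n+1$, and then make the same observation.)

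It then follows that $1-\phi^p=0$ as an endomorphism of $K_0(D^b(\mod A_n))$, whence $\operatorname{Im}(1-\phi^p)=0$ and
$$K_0(\C_{n,p}^A)\cong K_0(D^b(\mod A_n))\cong\mathbb{Z}^n,$$
the last isomorphism holding because $\{[S_1],\ldots,[S_n]\}$ is a basis of $K_0(D^b(\mod A_n))$ (equivalently, $A_n$ has exactly $n$ simple modules, using $K_0(H_n)\cong K_0(D^b(\mod H_n))$ from \cite{Gr}).

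There is essentially no serious obstacle here: the only points requiring care are the parity bookkeeping — making sure it is $1-\phi^p$, and not $1+\phi^p$, that controls $K_0(\C_{n,p}^A)$ — and the elementary fact that $[\Sigma X]=-[X]$ in the Grothendieck group of a derived category, which is what forces the even shift $\Sigma^{-2a}$ to act trivially. The statement then also serves as a natural consistency check: when $p=n+1$ the repetitive parameter exactly ``undoes'' the collapsing that occurs in the ordinary cluster category of type $A_n$ (where, by Lemma~\ref{36}, $K_0$ is $\mathbb{Z}$ for $n$ odd), so that the Grothendieck group becomes as large as it can be, namely the full $K_0$ of the derived category.
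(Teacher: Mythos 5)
Your proof is correct and follows essentially the same route as the paper: note that $p=a(n+1)$ is even, use the isomorphism $\tau^{n+1}\cong\Sigma^{-2}$ so that $[\tau^pX]=[\Sigma^{-2a}X]=[X]$, conclude $1-\phi^p=0$ and hence $K_0(\C_{n,p}^A)\cong K_0(D^b(\mod A_n))\cong\mathbb{Z}^n$. The only difference is cosmetic (you spell out explicitly that $\Sigma$ acts as $-1$ on $K_0$ and mention the optional reduction to $p=n+1$ via Lemma~\ref{Lem: n odd, p,q equiv}), which the paper leaves implicit.
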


\begin{proof}
	As $n$ is odd, then $p = a(n+1)$ must be even.
	Then $$(1 - \phi^p) [X] = [X] - [\tau^p X] = [X] - [\tau^{a(n+1)} X] = [X] - [\Sigma^{-2a} X] =0$$ for all $X \in D^b(\mod {A_n})$.
	Hence $\mathrm{Im}(1 - \phi^p) =0$, and so $K_0(C_{n,p}^A) \cong K_0(\D^b(\mod {A_n})) \cong \mathbb{Z}^n$.
\end{proof}

\begin{proposition}\label{Prop: n odd, p+q equiv 0}
	Let $p+q \equiv 0 \mod (n+1)$. Then $K_0(C_{n,p}^A) \cong K_0(C_{n,q}^A)$.
\end{proposition}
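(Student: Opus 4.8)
The plan is to relate $\mathrm{Im}(1 \pm \phi^p)$ and $\mathrm{Im}(1 \pm \phi^q)$ directly, using the relation $p + q \equiv 0 \pmod{n+1}$ together with the fundamental isomorphism $\tau^{n+1} \xrightarrow{\sim} \Sigma^{-2}$ in $D^b(\mod A_n)$. The statement does not assume $n$ is odd, but since the image-subgroup description of $K_0$ splits into the $p$ odd ($1+\phi^p$) and $p$ even ($1-\phi^p$) cases, I would first dispose of parity. Write $p + q = a(n+1)$. Note $q \equiv -p \pmod{n+1}$, and the sign $(-1)^{p+1}$ governing which operator $1 + (-1)^{p+1}\phi^p$ appears is what we must track.

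First I would handle the generic case where $p$ and $q$ have the same parity modulo $2$ — which, when $n$ is odd (so $n+1$ even), is automatic since $p + q = a(n+1)$ is even. In that case the operator for $p$ is $1 + (-1)^{p+1}\phi^p$ and for $q$ it is $1 + (-1)^{p+1}\phi^q$ (same sign $\varepsilon := (-1)^{p+1}$). The key computation: since $\phi^{n+1} = \Sigma^{-2}$ acts on $K_0(D^b(\mod A_n))$ as $-1$ (because $[\Sigma X] = -[X]$), we get $\phi^{a(n+1)} = (-1)^a$, so $\phi^q = \phi^{a(n+1) - p} = (-1)^a \phi^{-p}$. Then for any $x = [X]$,
\[
(1 + \varepsilon \phi^q)(x) = x + \varepsilon (-1)^a \phi^{-p} x.
\]
Applying $\varepsilon' \phi^p$ to this, where $\varepsilon' = \varepsilon(-1)^a$, yields $\varepsilon' \phi^p x + x = (1 + \varepsilon' \phi^p) x$, and since $\phi^p$ is an automorphism of $K_0(D^b(\mod A_n))$, this shows $\phi^p\bigl(\mathrm{Im}(1 + \varepsilon\phi^q)\bigr) = \mathrm{Im}(1 + \varepsilon'\phi^p)$ up to a unit sign, hence the two images coincide as subgroups provided $\varepsilon' = \varepsilon$, i.e. provided $a$ is even; if $a$ is odd one instead lands on $\mathrm{Im}(1 - \varepsilon\phi^p)$, so a short separate check is needed to confirm $\mathrm{Im}(1 + \varepsilon\phi^p) = \mathrm{Im}(1 - \varepsilon\phi^p)$ in that subcase, or to absorb the sign using $\phi^{n+1} = -1$ once more. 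Either way the two cokernels are isomorphic, giving $K_0(C_{n,p}^A) \cong K_0(C_{n,q}^A)$.

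The cleanest route, and the one I would actually write, is to combine this with Lemma \ref{Lem: n odd, p,q equiv}: since $q \equiv -p \pmod{n+1}$, it suffices to show $K_0(C_{n,p}^A) \cong K_0(C_{n,-p}^A)$ where $-p$ is taken as a positive representative mod $n+1$; and $(1 \pm \phi^{-p}) = \mp\phi^{-p}(1 \pm \phi^p)$ (reading signs off $\phi^{n+1} = -1$ to fix the parity of $-p$ versus $p$), so the images agree because $\phi^{-p}$ is invertible. The main obstacle is bookkeeping the signs: the operator attached to the parameter is $1 + (-1)^{p+1}\phi^p$, the substitution $q \mapsto -p$ mod $n+1$ may change the parity class of the representative, and $\phi^{n+1} = -1$ introduces a further sign $(-1)^a$. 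I expect the proof to reduce, after carefully reconciling these three sign sources, to the single clean observation that multiplying an operator of the form $1 \pm \phi^k$ by the unit $\pm\phi^{-k}$ preserves its image, so that everything collapses to Lemma \ref{Lem: n odd, p,q equiv} applied to $q \equiv -p$.
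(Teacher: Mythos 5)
Your overall strategy is genuinely different from the paper's: the paper writes out explicit generators of $\mathrm{Im}(1+(-1)^{p+1}\phi^p)$ via Lemma \ref{Lem: Si in Im} and then checks that the reflection automorphism $\eta\colon [S_i]\mapsto [S_{n+1-i}]$ carries the generator set for $p$ to the generator set for $q$, whereas you try to write $1\pm\phi^q$ as a unit multiple of $1\pm\phi^p$ using $\tau^{n+1}\cong\Sigma^{-2}$. That route can work, but as written it contains a concrete error: $\phi^{n+1}$ acts as $+1$ on $K_0(D^b(\mod A_n))$, not as $-1$. Indeed $[\Sigma X]=-[X]$ gives $[\Sigma^{-2}X]=[X]$, so $\phi^{n+1}[X]=[\tau^{n+1}X]=[\Sigma^{-2}X]=[X]$ (equivalently, the Coxeter transformation of $A_n$ has order $n+1$). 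Your spurious sign $(-1)^a$ is what forces your case split, and the fallback you propose in the ``$a$ odd'' case --- checking that $\mathrm{Im}(1+\varepsilon\phi^p)=\mathrm{Im}(1-\varepsilon\phi^p)$ --- is false in general: for $n=3$, $p=2$ one computes from Lemma \ref{Lem: Si in Im} that $\mathrm{Im}(1-\phi^2)=\langle [S_1]-[S_3],\,[S_1]+2[S_2]+[S_3]\rangle$ with cokernel $\mathbb{Z}\oplus\mathbb{Z}/2\mathbb{Z}$, while $\mathrm{Im}(1+\phi^2)=\langle [S_1]+[S_3]\rangle$ with cokernel $\mathbb{Z}^2$. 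So the argument, as proposed, has a step that would fail if that branch were ever needed. (There is also a harmless slip: the identity is $(1\pm\phi^{-p})=\pm\phi^{-p}(1\pm\phi^p)$, not $\mp\phi^{-p}(1\pm\phi^p)$, but since only images matter the sign of the unit is irrelevant.)

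The good news is that with the correct sign the problematic branch never occurs and your proof closes cleanly, without needing Lemma \ref{Lem: n odd, p,q equiv} as a crutch: since $n$ is odd, $n+1$ is even, so $p+q\equiv 0 \bmod (n+1)$ forces $p$ and $q$ to have the same parity, and $\varepsilon:=(-1)^{p+1}=(-1)^{q+1}$; then $\phi^q=\phi^{-p}$ on $K_0$, and $1+\varepsilon\phi^q=(\varepsilon\phi^{-p})(1+\varepsilon\phi^p)$ with $\varepsilon\phi^{-p}$ an automorphism commuting with $1+\varepsilon\phi^p$, so $\mathrm{Im}(1+\varepsilon\phi^q)=\mathrm{Im}(1+\varepsilon\phi^p)$ and the cokernels agree. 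This corrected version is arguably more economical than the paper's proof, since it needs neither the explicit generator lists nor the automorphism $\eta$; note also that the same-parity step is exactly where the standing hypothesis that $n$ is odd (implicit in this subsection, though absent from the statement) is used.
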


\begin{proof}
	By Lemma \ref{Lem: Si in Im}, we have that
	\[
	\mathrm{Im}(1 +(-1)^{p+1} \phi^p) = \langle [S_i] + (-1)^{p+1} [S_{i+p}], [S_{n+1-p}] + (-1)^p \sum_{j=1}^n [S_j] \rangle.
	\]
	Analogously, we have
	\[
	\mathrm{Im}(1 +(-1)^{q+1} \phi^q) = \langle [S_i] + (-1)^{p+1} [S_{i+q}], [S_{n+1-q}] + (-1)^q \sum_{j=1}^n [S_j] \rangle.
	\]
	
	Consider the automorphism $\eta \colon K_0(D^b(\mod {A_n})) \rightarrow K_0(D^b(\mod {A_n}))$ such that $$\eta \colon [S_i] \mapsto [S_{n+1-i}].$$
	Under the automorphism $\eta$, we get
$$
\begin{aligned}
\eta([S_i] +(-1)^{p+1} [S_{i+p}]) &= [S_{n+1-i}] +(-1)^{p+1} [S_{n+1-i-p}]\\
 &= [S_l] +(-1)^{q+1} [S_{l-p}] = [S_l] +(-1)^{q+1} [S_{l+q}]
\end{aligned}
$$
where the third equality follows from the assumption that $p+q \equiv 0 \mod (n+1)$, and $l= n+1-i$.
	Finally, we can see how $\eta$ acts on the other relation
	\[
	\eta([S_{n+1-p}] +(-1)^p \sum_{j=1}^n [S_j]) = [S_{p}] +(-1)^p \sum_{j=1}^n [S_{n+1-j}] = [S_{n+1-q}] +(-1)^q \sum_{k=1}^n [S_k],
	\]
	where, again, the second equality follows from the assumption that $p+q \equiv 0 \mod (n+1)$.
	
	Thus, $\eta$ is an isomorphism between $\mathrm{Im}(1 \pm \phi^p)$ and $\mathrm{Im}(1 \pm \phi^q)$, hence  $K_0(C_{n,p}^A) \cong K_0(C_{n,q}^A)$.
\end{proof}

\begin{proposition}\label{Prop: n odd, epi to K0}
	Let $n$ be odd, suppose that $0 < p \leq \frac{n+1}{2}$ and $n+1 \equiv k \mod (p)$ with $0 < k \leq p$.
	Then there exists an epimorphism
	\[
	\langle [S_1], [S_2],\ldots, [S_m] \rangle \twoheadrightarrow K_0(C_{n,p}^A),
	\]
	where $m$ is the greatest common divisor of $p$ and $k$.
\end{proposition}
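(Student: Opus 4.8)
The plan is to work with the presentation of $K_0(\C_{n,p}^A)$ supplied by Lemma~\ref{Lem: Si in Im}. Since $\{[S_1],\ldots,[S_n]\}$ is a basis of $K_0(D^b(\mod A_n))$, the image of $1+(-1)^{p+1}\phi^p$ is generated by the elements $[S_i]+(-1)^{p+1}[S_{i+p}]$ for $i=1,\ldots,n$, so that
\[
K_0(\C_{n,p}^A)\;\cong\;\langle [S_1],\ldots,[S_n]\rangle\,/\,\langle\, [S_i]+(-1)^{p+1}[S_{i+p}] : i=1,\ldots,n\,\rangle ,
\]
with all indices read in $\mathbb{Z}/(n+1)\mathbb{Z}$ and $[S_0]=-\sum_{j=1}^{n}[S_j]$. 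The first step is the arithmetic remark that $m=\gcd(p,k)=\gcd(p,n+1)$, which holds because $n+1\equiv k\pmod p$; hence $\langle p\rangle\subseteq\mathbb{Z}/(n+1)\mathbb{Z}$ is the subgroup $m\mathbb{Z}/(n+1)\mathbb{Z}$, of order $t:=(n+1)/m$, and the bound $0<p\le\frac{n+1}{2}$ guarantees $t\ge 2$ and $m\le p\le n+1-m$.

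The core of the argument is to prove that the images of $[S_1],\ldots,[S_m]$ already generate $K_0(\C_{n,p}^A)$, which is exactly what the proposition asserts, since it produces the natural surjection $\langle[S_1],\ldots,[S_m]\rangle\twoheadrightarrow K_0(\C_{n,p}^A)$. I would call a defining relation \emph{clean} if $i+p\not\equiv 0\pmod{n+1}$, so that it only involves the honest generators $[S_i]$ and $[S_{i+p}]$ with $i,i+p\in\{1,\ldots,n\}$; there is a unique non-clean relation, namely the one indexed by $i=n+1-p$. Fix $j\in\{1,\ldots,n\}$ and let $C=j+\langle p\rangle$ be its orbit under $+p$ in $\mathbb{Z}/(n+1)\mathbb{Z}$, a coset of size $t$ on which $+p$ acts as a single $t$-cycle. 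If $0\notin C$, then every relation indexed by an element of $C$ is clean, and chaining them gives $[S_j]=\pm[S_r]$ for every $r\in C$; in particular $[S_j]=\pm[S_{j\bmod m}]$ with $j\bmod m\in\{1,\ldots,m-1\}$. If $0\in C$, then $C\setminus\{0\}=\{m,2m,\ldots,n+1-m\}$, and the clean relations available among these $t-1$ generators are precisely the $t-2$ relations linking $[S_{\ell p}]$ to $[S_{(\ell+1)p}]$ for $\ell=1,\ldots,t-2$, which form a path through all of $C\setminus\{0\}$, hence through $m$; so $[S_j]=\pm[S_m]$. In either case $[S_j]$ lies in the subgroup generated by $[S_1],\ldots,[S_m]$, and we are done.

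The step I expect to be the main obstacle is the bookkeeping in the case $0\in C$. On that orbit the defining relations already assemble into a single path with the ``phantom'' vertex $0$ as one endpoint: the relation at index $0$ does not exist, while the only relation incident to $0$ is the lone non-clean one, indexed by $i=n+1-p$. Discarding that non-clean relation simply deletes the endpoint $0$ and leaves the sub-path on the $t-1$ honest generators of $C$, which remains connected and still meets $\{[S_1],\ldots,[S_m]\}$, necessarily in $[S_m]$; verifying this, and checking that the inequality $p\le\frac{n+1}{2}$ is what keeps $t\ge 2$ and places $m$ and $p$ inside $\{m,2m,\ldots,n+1-m\}$, is the delicate part. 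I would also note that the unique non-clean relation is never used to prove surjectivity: reinstating it contributes a relation among $[S_1],\ldots,[S_m]$, which is exactly the extra input needed to upgrade this epimorphism to a complete description of $K_0(\C_{n,p}^A)$.
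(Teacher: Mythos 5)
Your proposal is correct and follows essentially the same route as the paper: both arguments start from the presentation of $K_0(\C_{n,p}^A)$ given by Lemma~\ref{Lem: Si in Im} and chain the relations $[S_i]+(-1)^{p+1}[S_{i+p}]$ to identify every $[S_j]$, up to sign, with one of $[S_1],\ldots,[S_m]$, the collapse to $m=\gcd(p,k)$ classes being the same modular-arithmetic fact (you phrase it via cosets of $\langle p\rangle=m\mathbb{Z}/(n+1)\mathbb{Z}$ using $\gcd(p,k)=\gcd(p,n+1)$, the paper via reduction mod $p$ followed by the wrap-around relations shifting by $k$). Your explicit treatment of the single non-clean relation at index $n+1-p$, and the observation that it is not needed for surjectivity, is a welcome sharpening of the paper's somewhat terse bookkeeping but not a different method.
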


\begin{proof}
	Suppose that $p$ is even.
	By Lemma \ref{Lem: Si in Im}, it is clear that $[S_i] = [S_j]$ in $K_0(C_{n,p})$, for $i \equiv j \mod (p)$.
	This is due to the sum of the relations $([S_i] - [S_{i+p}]) + ([S_{i+p}] - [S_{i+2p}]) + \ldots + ([S_{j-p}] - [S_j]) = [S_i] - [S_j]$ in $\mathrm{Im}(1 - \phi^p)$.
	It remains to consider the following relations;
	\[
	[S_n] - [S_{p-1}], [S_{n-1}] - [S_{p-2}], \ldots, [S_{n-p+2}] - [S_1].
	\]
	
	Let $ 0 < k \leq p$, then the above relations become;
	\[
	[S_{k-1}] - [S_{p-1}], [S_{k-2}] - [S_{p-2}], \ldots, [S_{k+1}] - [S_1].
	\]
	Then we have the relation $[S_{k-1}] - [S_{p-1}]$, however, we also have the relation $[S_{l_1}] - [S_{k-1}]$, where $k-1 \equiv l_1 \mod(p)$.
	Hence we have the relation $[S_{l_1}] - [S_{p-1}]$.
	Via this method, we find all of the relations $[S_{l_b}] - [S_{p-1}]$ such that $bk -1 \equiv l_b \mod(p)$.
	It is clear that $l_b$ and $l_{b+1}$ have a difference of $k$, and so by elementary modular arithmetic, we find that we split the set $\{[S_1], [S_2], \ldots, [S_p]\}$ into $m$ subsets, each containing an exactly one of the elements $\{[S_1], [S_2], \ldots, [S_m]\}$ where $m$ is the greatest common divisor of $p$ and $k$.
	
	Now suppose that $p$ is odd.
	Then we have the alternating sum of relations due to Lemma \ref{Lem: Si in Im}, $([S_i] + [S_{i+p}]) - ([S_{i+p}] + [S_{i+2p}]) + \ldots + (-1)^r ([S_{j-p}] + [S_j]) = [S_i] + (-1)^r [S_j]$ in $\mathrm{Im}(1 + \phi^p)$, where $i + rp = j-p$.
	An analogous argument to the case when $p$ is even then proves our claim.
\end{proof}

\begin{theorem}\label{Thm: n odd, p even}
	Let $n,p,m$ as in Proposition \ref{Prop: n odd, epi to K0}, and further suppose that $p$ is even.
	Then
	\[
	K_0(C_{n,p}^A) \cong \langle [S_1], [S_2], \ldots, [S_m] \mid [S_c] + \sum_{j=1}^{m} \alpha_j [S_j] = 0 \rangle,
	\]
	where $n+1-p \equiv c \mod(m)$, and

\[ \alpha_j =\left\{
\begin{array}{ll}
a+1, & \text{\; if \;} 0<j \leq b,\\
		a, & \text{\; if \;} b < j \leq m,
\end{array}
\right.
\]	
for $am+b=n$.
\end{theorem}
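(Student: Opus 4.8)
The plan is to pin down the subgroup $\mathrm{Im}(1-\phi^p)$ inside $K_0(D^b(\mod A_n))\cong\mathbb{Z}^n$ precisely enough to read off a presentation of the quotient. Since $p$ is even, the discussion at the start of this section identifies $K_0(\C_{n,p}^A)$ with $\mathbb{Z}^n/\mathrm{Im}(1-\phi^p)$, where $\mathbb{Z}^n=\langle[S_1],\ldots,[S_n]\rangle$, and $\mathrm{Im}(1-\phi^p)$ is generated by the $n$ elements $(1-\phi^p)[S_i]=[S_i]-[S_{i+p}]$ for $i=1,\ldots,n$; here, by Lemma~\ref{Lem: Si in Im}, indices are read modulo $n+1$ and $[S_0]=-\sum_{j=1}^n[S_j]$. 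A preliminary observation used throughout is that $m=\gcd(p,k)=\gcd(p,n+1)$, because $k\equiv n+1\pmod p$; in particular $m\mid p$, $m\mid(n+1)$, and hence $m\mid(n+1-p)$.

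First I would introduce the ``collapse modulo $m$'' homomorphism $\rho\colon\mathbb{Z}^n\to\mathbb{Z}^m$ sending $[S_i]$ to $[S_{\bar{i}}]$, where $\bar{i}\in\{1,\ldots,m\}$ denotes the residue of $i$ modulo $m$; its kernel $L$ is generated by the differences $[S_i]-[S_{i'}]$ with $i\equiv i'\pmod m$, and $\rho$ restricts to the identity on $\mathbb{Z}^m=\langle[S_1],\ldots,[S_m]\rangle$. Because $m\mid p$ and $m\mid(n+1)$, every generator $(1-\phi^p)[S_i]$ with $i+p\not\equiv 0\pmod{n+1}$ has the form $[S_i]-[S_{i'}]$ with $i'\equiv i+p\equiv i\pmod m$, so it lies in $L$; the only remaining generator, occurring at $i=n+1-p$, is $w:=[S_{n+1-p}]-[S_0]=[S_{n+1-p}]+\sum_{j=1}^n[S_j]$. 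This already gives $\mathrm{Im}(1-\phi^p)\subseteq L+\mathbb{Z}w$.

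For the reverse inclusion, $w\in\mathrm{Im}(1-\phi^p)$ since $w=(1-\phi^p)[S_{n+1-p}]$, whereas $L\subseteq\mathrm{Im}(1-\phi^p)$ is precisely the assertion that $[S_i]=[S_{\bar{i}}]$ in $K_0(\C_{n,p}^A)$ for every $i$ --- this is exactly what the proof of Proposition~\ref{Prop: n odd, epi to K0} establishes, the shift-by-$p$ relations together with the wrap-around relations $[S_n]-[S_{p-1}],\ldots,[S_{n-p+2}]-[S_1]$ combining to realise a shift by $\gcd(p,k)=m$. Granting this, $\mathrm{Im}(1-\phi^p)=L+\mathbb{Z}w$, and therefore
\[
K_0(\C_{n,p}^A)\cong\mathbb{Z}^n/(L+\mathbb{Z}w)\cong(\mathbb{Z}^n/L)/\langle\rho(w)\rangle=\langle[S_1],\ldots,[S_m]\rangle/\langle\rho(w)\rangle.
\]
It remains to evaluate $\rho(w)=[S_{\overline{n+1-p}}]+\sum_{j=1}^n[S_{\bar{j}}]=[S_c]+\sum_{j=1}^m\alpha_j[S_j]$, where $c\equiv n+1-p\pmod m$ (so in fact $c=m$) and $\alpha_j=\#\{\,i\in\{1,\ldots,n\}:i\equiv j\pmod m\,\}$. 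Writing $n=am+b$ with $0\le b<m$, a short residue count gives $\alpha_j=a+1$ for $1\le j\le b$ and $\alpha_j=a$ for $b<j\le m$, which is precisely the asserted presentation.

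The step I expect to be the main obstacle is the inclusion $L\subseteq\mathrm{Im}(1-\phi^p)$, i.e.\ verifying that the quotient identifies $[S_i]$ with $[S_{\bar{i}}]$ and with nothing finer; this is the genuinely combinatorial heart, fusing the ``internal'' shift-by-$p$ relations with the ``boundary'' wrap-around relations into a single shift by $m=\gcd(p,n+1)$. Since it has already been isolated as (the proof of) Proposition~\ref{Prop: n odd, epi to K0}, the remaining work is bookkeeping: confirming $m=\gcd(p,n+1)$, locating the unique boundary generator $w$, and carrying out the residue count that produces $c$ and the coefficients $\alpha_j$.
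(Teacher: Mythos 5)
Your proposal is correct and follows essentially the same route as the paper's proof: use the identifications from Proposition \ref{Prop: n odd, epi to K0} to collapse the classes $[S_i]$ to representatives $[S_1],\ldots,[S_m]$, observe that the only generator of $\mathrm{Im}(1-\phi^p)$ not absorbed by these identifications is $[S_{n+1-p}]+\sum_{j=1}^n[S_j]$, and evaluate it by counting residues modulo $m$ to get the coefficients $\alpha_j$. Your write-up merely makes explicit (via $\rho$, the decomposition $\mathrm{Im}(1-\phi^p)=L+\mathbb{Z}w$, and the identity $m=\gcd(p,n+1)$, whence $c=m$) what the paper asserts more tersely.
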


\begin{proof}
	As $p$ is even, then our relations are $[S_j] - [S_{j+m}]$ by Lemma \ref{Lem: Si in Im}, and so $[S_j] = [S_{j+m}]$ in $K_0(C_{n,p}^A)$.
	This implies there are no more relations to consider except $[S_{n+1-p}] + \sum_{j=1}^{n} [S_j]$.
	Our result follows from noticing that $[S_c] = [S_{n+1-p}]$, and that $[S_j] = [S_{j+sm}]$ for all $s$ such that $0 < j+sm \leq n$.
	Therefore $0 \leq s \leq a$ if $0 < j \leq b$, and $0 \leq s < a$ if $b < j \leq m$, and so,
	\[
	[S_{n+1-p}] + \sum_{j=1}^{n} [S_j] = [S_c] + \sum_{j=1}^b \sum_{s=0}^a [S_{j+sm}] + \sum_{j=b+1}^m \sum_{s=0}^{a-1} [S_{j+sm}] = [S_c] + \sum_{j=1}^b (a+1) [S_j] + \sum_{j=b+1}^m a [S_j].
	\]
\end{proof}

\begin{theorem}\label{Thm: n odd, p odd}
	Let $n,p,m$ as in Proposition \ref{Prop: n odd, epi to K0}, and further suppose that $p \not\equiv \pm 1 \mod (n+1)$ is odd, and that $\frac{p}{m}$ is even.
	Then,
\[ K_0(C_{n,p}^A) =\left\{
\begin{array}{ll}
\langle [S_1], [S_2], \ldots, [S_m] \mid (-1)^t [S_c] - \sum_{j=1}^{b} [S_j] = 0 \rangle, & \text{if\;} $a$ \text{\;is\;even},\\
		\langle [S_1], [S_2], \ldots, [S_m] \mid (-1)^t [S_c] - \sum_{j=b+1}^{m} [S_j] = 0 \rangle, & \text{if\;} $a$ \text{\;is\;odd},
\end{array}
\right.
\]	
	where $n+1-p = tm +c$, and $am+b=n$.
	Further, if $\frac{p}{m}$ is odd.
	Then,
\[ K_0(C_{n,p}^A) =\left\{
\begin{array}{ll}
\langle [S_1], [S_2], \ldots, [S_m] \mid 2[S_j] = 0, (-1)^t [S_c] - \sum_{j=1}^{b} [S_j] = 0\rangle, & \text{if\;} $a$ \text{\;is\;even},\\
		\langle [S_1], [S_2], \ldots, [S_m] \mid 2[S_j] = 0, (-1)^t [S_c] - \sum_{j=b+1}^{m} [S_j] = 0 \rangle, & \text{if\;} $a$ \text{\;is\;odd}.
\end{array}
\right.
\]	
\end{theorem}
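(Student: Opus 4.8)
The plan is to continue directly from the setup in Proposition \ref{Prop: n odd, epi to K0} and Theorem \ref{Thm: n odd, p even}, but now tracking signs carefully since $p$ is odd. By Lemma \ref{Lem: Si in Im}, the relations defining $\mathrm{Im}(1+\phi^p)$ are $[S_i] + [S_{i+p}]$ together with the wrap-around relation $[S_{n+1-p}] - \sum_{j=1}^n [S_j]$ (note the sign: $(-1)^p = -1$). As in Proposition \ref{Prop: n odd, epi to K0}, chaining the relations $[S_i]+[S_{i+p}]$ with alternating signs collapses $\{[S_1],\dots,[S_p]\}$ onto $\{[S_1],\dots,[S_m]\}$, but now each identification $[S_j] = \pm[S_{j+m}]$ carries a sign determined by the parity of $\frac{p}{m}$: a single "step" of size $m$ within the cyclic group $\mathbb Z/(n+1)$ corresponds to $\frac{p}{m}$ steps of size $p$, so $[S_{j}] = (-1)^{p/m}[S_{j+m}]$. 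When $\frac{p}{m}$ is even we get $[S_j] = [S_{j+m}]$ with no $2$-torsion; when $\frac{p}{m}$ is odd we get $[S_j] = -[S_{j+m}]$, and going around the full cycle forces $[S_j] = -[S_j]$, i.e. $2[S_j]=0$. This explains the dichotomy in the statement and accounts for the extra relations $2[S_j]=0$ in the odd case.

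The next step is to rewrite the single surviving wrap-around relation $[S_{n+1-p}] - \sum_{j=1}^n [S_j] = 0$ in terms of the generators $[S_1],\dots,[S_m]$. Writing $n+1-p = tm+c$ with $0 < c \le m$, repeated application of $[S_{j+m}] = (-1)^{p/m}[S_j]$ gives $[S_{n+1-p}] = (-1)^{t}[S_c]$ when $\frac{p}{m}$ is odd (each of the $t$ steps contributing a sign), and $[S_{n+1-p}] = [S_c]$ (consistent with $(-1)^t$ modulo the fact that $2[S_c]=0$ makes the sign irrelevant only in the odd-$\frac{p}{m}$ case — but in the even case $t$ is automatically even or the sign is $+1$; this needs a short check) — the cleanest uniform bookkeeping is to keep the factor $(-1)^t$ throughout. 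For the sum $\sum_{j=1}^n [S_j]$, group the indices $1,\dots,n$ into residue classes mod $m$ exactly as in the proof of Theorem \ref{Thm: n odd, p even}: with $am+b=n$, each class $j$ with $0<j\le b$ contains $a+1$ representatives and each class with $b<j\le m$ contains $a$ representatives. Now, crucially, when $\frac{p}{m}$ is odd the representatives alternate in sign, so $\sum_{s=0}^{r}[S_{j+sm}] = [S_j]\sum_{s=0}^r (-1)^s$, which is $[S_j]$ if $r$ is even and $0$ if $r$ is odd. Hence a class of size $a+1$ contributes $[S_j]$ iff $a$ is even, and a class of size $a$ contributes $[S_j]$ iff $a$ is odd — this is precisely the case split in the statement ($\sum_{j=1}^b$ versus $\sum_{j=b+1}^m$). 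When $\frac{p}{m}$ is even all representatives in a class are equal, and the coefficient count reduces mod $2$: a class of size $a+1$ contributes $(a+1)[S_j]$, a class of size $a$ contributes $a[S_j]$; but once one establishes $2[S_j]=0$ is \emph{absent} here, one must instead observe that the hypothesis $p \not\equiv \pm 1 \bmod (n+1)$ together with $\frac{p}{m}$ even forces the coefficients to be interpreted over $\mathbb Z$ — so here the reduction is genuinely the parity reduction $a+1 \bmod 2$, $a \bmod 2$, again giving the $\sum_{j=1}^b$ or $\sum_{j=b+1}^m$ alternative according to the parity of $a$.

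Assembling these computations yields the four displayed presentations: in the $\frac{p}{m}$-odd case we additionally include the torsion relations $2[S_j]=0$ for $j=1,\dots,m$, and in both cases the single "defect" relation reads $(-1)^t[S_c] - \sum_{j\in J}[S_j] = 0$ where $J = \{1,\dots,b\}$ if $a$ is even and $J=\{b+1,\dots,m\}$ if $a$ is odd. Finally I would verify that no further relations survive — that is, that the epimorphism of Proposition \ref{Prop: n odd, epi to K0} has exactly the kernel generated by what we have listed — by a dimension/rank count or by exhibiting an explicit inverse map on the quotient; the hypothesis $p \not\equiv \pm 1 \bmod(n+1)$ is what guarantees $m < n+1$ strictly and rules out the degenerate classical-cluster-category cases already covered by Lemma \ref{Lem: n odd, p,q equiv} and Lemma \ref{36}.

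\medskip

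\noindent\textbf{Main obstacle.} The delicate point is the sign bookkeeping: one must consistently translate "a step of size $m$" into "$\frac{p}{m}$ steps of size $p$" within $\mathbb Z/(n+1)\mathbb Z$, track the resulting $(-1)^{p/m}$ factor through both the collapsing of generators and the re-expansion of $\sum_{j=1}^n[S_j]$, and correctly handle the interaction between the factor $(-1)^t$ on $[S_{n+1-p}]$ and the $2$-torsion. A secondary subtlety is confirming that in the $\frac pm$-even subcase there really is no $2$-torsion — this is where the coprimality structure $m=\gcd(p,k)$ and the orientation of $A_n$ (Figure \ref{quiverAn}, giving $\tau^{n+1}\cong\Sigma^{-2}$) must be used to show the cycle of identifications closes up with sign $+1$ rather than $-1$.
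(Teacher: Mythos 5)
Your proposal follows the same route as the paper's own argument (collapse the relations of Lemma \ref{Lem: Si in Im} onto step-$m$ identifications with signs, then rewrite the one relation involving $\sum_{j=1}^n[S_j]$, splitting on the parities of $p/m$ and $a$), but the two sign-bookkeeping steps that you yourself flag as the main obstacle are genuinely incorrect, and they carry the whole argument. First, the translation of one $m$-step into $p$-steps: writing $p=mp'$ and $n+1=mN'$ with $\gcd(p',N')=1$, the identification $[S_j]=\pm[S_{j+m}]$ is obtained by applying the relations $[S_i]+[S_{i+p}]=0$ exactly $x$ times where $xp\equiv m \pmod{n+1}$, i.e. $x\equiv (p')^{-1}\pmod{N'}$; this is not $p/m$ in general (your $p/m$ steps of size $p$ move by $p^2/m$, and $p'^2\equiv 1\pmod{N'}$ fails, e.g. for $n+1=16$, $p=3$). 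Since $p$ is odd, $m$ and $p'$ are odd while $N'=(n+1)/m$ is even (here $n+1$ is even), so $x$ is odd and the sign is always $-1$; in particular the dichotomy on the parity of $p/m$ is empty on one side, because $p$ odd forces $p/m$ odd.

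Second, and more seriously, the torsion claim: going around the full cycle means $N'=(n+1)/m$ steps, and $N'$ is even, so the accumulated sign is $+1$, not $-1$, and no relation $2[S_j]=0$ is produced. This is not repairable by better bookkeeping. Model $K_0(D^b(\mod A_n))$ as $\mathbb{Z}^{n+1}/\langle \sum_i e_i\rangle$ with $\phi\colon e_i\mapsto e_{i+1}$ (indices mod $n+1$, $e_0=[S_0]$); then $K_0(C_{n,p}^A)$ is $\mathbb{Z}^{n+1}/\langle \sum_i e_i,\; e_i+e_{i+p}\rangle$, every orbit of $i\mapsto i+p$ has even length $N'$, the element $\sum_i e_i$ is already a sum of the generators $e_i+e_{i+p}$, and the quotient is free of rank $m=\gcd(p,n+1)$. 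Concretely, for $(n,p)=(5,3)$ (so $m=3$, $p/m=1$ odd) the cokernel of $1+\phi^3$ is $\mathbb{Z}^3$, and for $(n,p)=(7,3)$ it is $\mathbb{Z}$, with no $2$-torsion; so the final step you defer, checking that "no further relations survive" beyond those listed, cannot be completed for the presentation containing $2[S_j]=0$. Be aware that the same sign issue is present in the published proof you are reconstructing, so matching its conclusion is not evidence of correctness: with the relations of Lemma \ref{Lem: Si in Im}, the computation above gives $K_0(C_{n,p}^A)\cong\mathbb{Z}^{\gcd(p,n+1)}$ for $n$ and $p$ odd, and any correct write-up has to re-derive the target statement accordingly rather than reproduce the claimed torsion.
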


\begin{proof}
	As $p$ is odd, then our relations are $[S_j] + [S_{j+m}]$ by Proposition \ref{Prop: n odd, epi to K0}, and so $[S_j] = (-1)^s[S_{j+sm}]$ in $K_0(C_{n,p}^A)$.
	
	We have two conditions to consider, whether $m \mid p$ is odd or even, and if $a$ is even.
	\begin{itemize}
		\item 	If $\frac{p}{m}$ is even, then there exists an even number of elements in the equivalence class of $[S_i]$ in $\{[S_1],[S_2], \ldots, [S_p]\}$ for $0 < i \leq m$, and so $[S_i] = (-1)^s[S_{i+sm}]$, where $s$ is even if $sm=p$.
		Therefore we do not induce any more relations on $[S_i]$ than those we have already considered, as $[S_i] = [S_i]$.
		
		However, if $\frac{p}{m}$ is odd, then there exists an odd number of elements in the equivalence class of $[S_i]$ in $\{[S_1],[S_2], \ldots, [S_p]\}$ for $0 < i \leq m$, and so $[S_i] = (-1)^s[S_{i+sm}]$, where $s$ is odd if $sm=p$.
		Therefore we find the new relation $[S_i] = -[S_i]$ for all $0 < i \leq m$, and so $2[S_i]=0$.\\
		
		\item Consider the relation $[S_{n+1-p}] - \sum_{j=1}^n [S_j]$.
		Then $[S_{n+1-p}] = (-1)^t[S_c]$ for $tm + c = n+1-p$, and $\sum_{j=1}^n [S_j] = \sum_{j=1}^r [S_j]$, where $2tm + r =n$ for $0 \leq r < 2m$.
		This is due to
		\[
		\sum_{j=r+1}^{r+1+2m} [S_j] = \sum_{j=r+1}^{r+1+m} [S_j] + [S_{j+m}] = \sum_{j=r+1}^{r+1+m} [S_j] - [S_j] = 0.
		\]
		
		If $a$ is even, then $a=2t$ and $r=b < m$, and so we have the relation $[S_{n+1-p}] - \sum_{j=1}^n [S_j] = [S_{n+1-p}] - \sum_{j=1}^b [S_j] = (-1)^t[S_c] - \sum_{j=1}^b [S_j]$.
		Hence $(-1)^t[S_c] - \sum_{j=1}^b [S_j] =0$ in $K_0(C_{n,p}^A)$ if $a$ is even.
		
		If $a$ is odd, then $a=2t+1$, and $r=b+m$, so we have the relation
		\[
		[S_{n+1-p}] - \sum_{j=1}^n [S_j] = (-1)^t[S_c] - \sum_{j=1}^m [S_j] + \sum_{j=1}^b [S_j] = (-1)^t [S_c] - \sum_{j=b+1}^m [S_j].
		\]
		Hence $(-1)^t [S_c] - \sum_{j=b+1}^m [S_j] = 0$ in $K_0(C_{n,p}^A)$ if $a$ is odd.
	\end{itemize}
	
	Our claim follows by consideration of all combinations of the above two cases.
\end{proof}


\begin{lemma}\label{Lem: n odd, p equiv 1}
	Let $n$ be odd, and suppose that $p \equiv \pm 1 \mod (n+1)$, then $K_0(C_{n,p}^A) \cong \mathbb{Z}$.
\end{lemma}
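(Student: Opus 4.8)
The plan is to bypass any fresh computation and reduce the statement to the already-solved case $p=1$, where $C_{n,1}^A = C_n^A$ is the ordinary cluster category of type $A_n$ and Lemma~\ref{36} gives $K_0(C_n^A) \cong \mathbb{Z}$ because $n$ is odd. Before doing so I would record one parity observation that gets used throughout: since $n$ is odd, $n+1$ is even, so any $p$ with $p \equiv \pm 1 \mod (n+1)$ is itself odd. In particular $p$ and $q=1$ have the same parity, which is exactly the compatibility needed in order to apply the isomorphism results of this subsection (they keep us in the ``$1+\phi^p$'' case uniformly).

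I would then split into the two sign cases. If $p \equiv 1 \mod (n+1)$, I would apply Lemma~\ref{Lem: n odd, p,q equiv} with $q = 1$ to get $K_0(C_{n,p}^A) \cong K_0(C_{n,1}^A) = K_0(C_n^A) \cong \mathbb{Z}$. If instead $p \equiv -1 \mod (n+1)$, then $p + 1 \equiv 0 \mod (n+1)$, so I would apply Proposition~\ref{Prop: n odd, p+q equiv 0} with $q = 1$ to obtain $K_0(C_{n,p}^A) \cong K_0(C_{n,1}^A) \cong \mathbb{Z}$ in the same way. Together the two cases cover exactly the hypothesis $p \equiv \pm 1 \mod (n+1)$, and this finishes the argument.

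As a self-contained cross-check (and an alternative if one prefers not to route through Lemma~\ref{36} and Proposition~\ref{Prop: n odd, p+q equiv 0}), I would instead compute the cokernel directly from Lemma~\ref{Lem: Si in Im}. With $p$ odd and $p \equiv 1 \mod (n+1)$, that lemma shows $\mathrm{Im}(1 + \phi^p) \subseteq K_0(D^b(\mod A_n)) \cong \mathbb{Z}^n$ is generated by the elements $[S_i] + [S_{i+1}]$ for $i = 1, \ldots, n-1$ together with $[S_n] - \sum_{j=1}^n [S_j]$; the first $n-1$ relations identify $[S_i]$ with $(-1)^{i-1}[S_1]$, collapsing the group onto $\mathbb{Z}\langle [S_1]\rangle$, and the last relation becomes $0 = 0$ precisely because $n$ is odd (so $(-1)^{n-1} = 1$ and $\sum_{j=1}^n (-1)^{j-1} = 1$). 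The case $p \equiv -1 \mod (n+1)$ is the mirror image under $[S_i] \mapsto [S_{n+1-i}]$, as in the proof of Proposition~\ref{Prop: n odd, p+q equiv 0}. Both routes are short, so the only genuine point of care — the main obstacle, such as it is — is bookkeeping: checking that the parities line up so the cited lemma and proposition apply with $q=1$, and, in the direct computation, verifying that the final relation is truly vacuous rather than contributing torsion. Each of these is an immediate consequence of $n$ being odd.
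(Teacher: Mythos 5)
Your proposal is correct, but your primary route is genuinely different from the paper's. The paper proves this lemma by direct computation: it writes down the generators of $\mathrm{Im}(1+\phi^p)$ supplied by Lemma \ref{Lem: Si in Im}, namely $[S_1]+[S_2],\ldots,[S_{n-1}]+[S_n]$ and $[S_n]-\sum_{i=1}^n[S_i]$, deduces $[S_i]=(-1)^{i+1}[S_1]$ from the first $n-1$ relations, and checks that the last relation collapses to $0$ precisely because $n$ is odd, so the cokernel is $\langle[S_1]\rangle\cong\mathbb{Z}$ --- this is exactly your ``cross-check'' computation. Your main route instead reduces to $p=1$: Lemma \ref{Lem: n odd, p,q equiv} with $q=1$ when $p\equiv 1\bmod (n+1)$, Proposition \ref{Prop: n odd, p+q equiv 0} with $q=1$ when $p\equiv -1\bmod(n+1)$, and then Lemma \ref{36} (Barot--Kussin--Lenzing) for $K_0(C^A_{n,1})=K_0(C^A_n)\cong\mathbb{Z}$. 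This is legitimate and non-circular, since both cited results are established earlier and independently of the present lemma, and your parity remark ($n+1$ even forces $p$ odd, matching $q=1$) is exactly what keeps the hypotheses aligned. What the reduction buys is brevity and reuse of the structural isomorphisms already proved; what it costs is self-containedness: the paper's direct computation recovers the $p=1$ result of \cite{BKL} as a subcase (as noted immediately after the lemma), whereas your primary argument consumes that result as an input. Since you also include the direct cokernel computation, with the $p\equiv-1$ case handled by the symmetry $[S_i]\mapsto[S_{n+1-i}]$, nothing essential is missing from either route.
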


\begin{proof}
	By Lemma \ref{Lem: Si in Im}, we have the set of relations
	\[
	\{[S_1] + [S_2], [S_2] + [S_3], \ldots, [S_{n-1}] + [S_n], [S_n] - \sum_{i=1}^n [S_i]\}.
	\]
	The first $n-1$ relations in the above set imply that $[S_i] = (-1)^{i+1}[S_1]$ in $K_0(C_{n,p})$.
	Therefore we get:
	\begin{align*}
		[S_n] - \sum_{i=1}^n [S_i] &= [S_n] - [S_n] - \sum_{i=1}^{\frac{n-1}{2}} ([S_{2i-1}] +[S_{2i}])\\
		&= 0- \sum_{i=1}^{\frac{n-1}{2}} ((-1)^{2i}[S_1] + (-1)^{2i+1} [S_{2i}]\\
		&=0.
	\end{align*}
	Hence, 	$K_0(C_{n,p}^A) \cong \langle [S_1] \rangle \cong \mathbb{Z}$.
\end{proof}

As a subcase of Lemma \ref{Lem: n odd, p equiv 1}, when $p=1$, we recover the Grothendieck group of the classical cluster category of type $A_n$, with $n$ odd.
This is a result originally due to Barot, Kussin and Lenzing \cite{BKL}, see Lemma \ref{36}.

\begin{example}
Suppose $n=3$. Then we have
$$K_0(C_{3,1}^A)=K_0(C_{3,5}^A)=\cdots=K_0(C_{3,1+4k}^A)=\mathbb{Z}$$
by Lemma \ref{Lem: n odd, p equiv 1},
$$K_0(C_{3,2}^A)=K_0(C_{3,6}^A)=\cdots=K_0(C_{3,2+4k}^A), $$
and
$$K_0(C_{3,3}^A)=K_0(C_{3,7}^A)=\cdots=K_0(C_{3,4+4k}^A)=\mathbb{Z}$$
by Lemma \ref{Prop: n odd, p+q equiv 0},
$$K_0(C_{3,4}^A)=K_0(C_{3,8}^A)=\cdots=K_0(C_{3,4k}^A)=\mathbb{Z}^3$$
by Lemma \ref{Lem: n odd, p equiv 0},
where $k\geq 1$ and $k\in\mathbb{Z}$.
\end{example}

\subsubsection{When $n$ is even}

\begin{lemma}\label{Lem: n even, p,q equiv}
	Let $p,q \geq 1$ such that $p \equiv q \mod (2(n+1))$. Then $K_0(C_{n,p}^A) \cong K_0(C_{n,q}^A)$.
\end{lemma}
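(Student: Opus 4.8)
The plan is to mimic the argument of Lemma~\ref{Lem: n odd, p,q equiv}, the only new subtlety being that for $n$ even the integer $n+1$ is odd, so congruence modulo $n+1$ no longer controls the parity of the exponent; this is precisely why the modulus here is doubled. Recall from the proof of Lemma~\ref{Lem: n odd, p,q equiv} that in $D^b(\mod A_n)$ there is a natural isomorphism $\tau^{n+1} \xrightarrow{\sim} \Sigma^{-2}$. Passing to the Grothendieck group and using $[\Sigma X] = -[X]$, this gives $\phi^{n+1}([X]) = [\Sigma^{-2}X] = [X]$, i.e. $\phi^{n+1} = \mathrm{id}$ on $K_0(D^b(\mod A_n))$.

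Now suppose $p \equiv q \bmod 2(n+1)$, say $p = q + 2a(n+1)$ with $a \in \mathbb{Z}$. First, since $2(n+1)$ is even we have $p \equiv q \bmod 2$, hence $(-1)^{p+1} = (-1)^{q+1}$; this ensures that the two relevant short exact sequences from the start of Section~3 (the ``$+$'' one when $p,q$ are odd, the ``$-$'' one when $p,q$ are even) carry the same sign. Second, for every $X \in D^b(\mod A_n)$,
\[
\phi^p([X]) = (\phi^{n+1})^{2a}\,\phi^q([X]) = \phi^q([X]),
\]
so $[\tau^p X] = [\tau^q X]$ in $K_0(D^b(\mod A_n))$, and therefore $(1 + (-1)^{p+1}\phi^p)[X] = (1 + (-1)^{q+1}\phi^q)[X]$ for all $X$.

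Consequently $\mathrm{Im}(1 + (-1)^{p+1}\phi^p) = \mathrm{Im}(1 + (-1)^{q+1}\phi^q)$ as subgroups of $K_0(D^b(\mod A_n))$, and the cokernel descriptions $K_0(C_{n,p}^A) \cong \mathrm{Coker}(1 + (-1)^{p+1}\phi^p)$ and $K_0(C_{n,q}^A) \cong \mathrm{Coker}(1 + (-1)^{q+1}\phi^q)$ yield the desired isomorphism. There is no genuine obstacle here: the entire content is the bookkeeping of parity together with the periodicity $\phi^{n+1} = \mathrm{id}$ on $K_0$. The one point requiring a moment's care — and the reason the modulus is $2(n+1)$ rather than $n+1$ — is that the sign appearing in the short exact sequence is governed by the parity of the exponent, which for $n$ even is invisible to reduction modulo the odd number $n+1$; equivalently, since $\gcd(2,n+1)=1$, the condition $p \equiv q \bmod 2(n+1)$ is exactly $p \equiv q \bmod (n+1)$ together with $p \equiv q \bmod 2$.
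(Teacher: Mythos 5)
Your proposal is correct and follows essentially the same route as the paper: both rest on the identification $\tau^{n+1}\cong\Sigma^{-2}$ (so $[\tau^{p}X]=[\tau^{q}X]$ when $p=q+2a(n+1)$), the observation that $p\equiv q \bmod 2(n+1)$ forces equal parity and hence the same sign in $1\pm\phi^{p}$, and the resulting equality $\mathrm{Im}(1\pm\phi^{p})=\mathrm{Im}(1\pm\phi^{q})$ giving isomorphic cokernels. Your packaging of the key fact as $\phi^{n+1}=\mathrm{id}$ on $K_0$ and the uniform sign $(-1)^{p+1}$ is only a cosmetic variant of the paper's direct computation split into even and odd cases.
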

\begin{proof}
Since $n$ is even and  $p \equiv q \mod (2(n+1))$,  $p$ is even/odd if and only if $q$ is even/odd.
	
	Consider the case when $p$ is even, and $p = q +2a(n+1)$ for some $a \in \mathbb{Z}$.
	Then we have $(1 - \phi^p)(X) = [X] - [\tau^p X] = [X] - [\tau^{q + 2a(n+1)}X] = [X] - [\Sigma^{-4a} (\tau^q X)] = [X] - [\tau^q X] = (1 - \phi^q)(X)$ for all $X \in D^b(\mod {A_n})$.
	Therefore $\mathrm{Im}(1 - \phi^p) = \mathrm{Im}(1 - \phi^q)$.
	
	An analogous argument shows that when $p$ is odd, then $\mathrm{Im}(1 + \phi^p) = \mathrm{Im}(1 + \phi^q)$, and so $K_0(C_{n,p}^A) \cong K_0(C_{n,q}^A)$ whenever $p \equiv q \mod (2(n+1))$.
\end{proof}

\begin{proposition}\label{11}
	Suppose that $p,q \geq 1$ with $p+q \equiv 0 \mod (2(n+1))$.
	Then $K_0(C_{n,p}^A) \cong K_0(C_{n,q}^A)$.
\end{proposition}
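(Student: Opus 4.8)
The plan is to follow the proof of Proposition \ref{Prop: n odd, p+q equiv 0} almost verbatim, once we have isolated the two arithmetic facts the argument really uses. I would first unpack the hypothesis: for $n$ even, $\gcd(2,n+1)=1$, so $2(n+1)\mid p+q$ is equivalent to the conjunction of (i) $p$ and $q$ have the same parity (as $p+q$ is even) and (ii) $p+q\equiv 0\pmod{n+1}$. Fact (i) is exactly what guarantees that $p$ and $q$ lie on the same side of the parity dichotomy, so that $K_0(C_{n,p}^A)$ and $K_0(C_{n,q}^A)$ are the cokernels of $1+(-1)^{p+1}\phi^p$ and $1+(-1)^{q+1}\phi^q$ respectively with $(-1)^{p+1}=(-1)^{q+1}$; this additional requirement, which is automatic when $n$ is odd (since then $n+1$ is even), is precisely why the modulus here must be $2(n+1)$ rather than $n+1$.

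Next I would use Lemma \ref{Lem: Si in Im} to record the generating sets
\[
\mathrm{Im}\big(1+(-1)^{p+1}\phi^p\big)=\big\langle\, [S_i]+(-1)^{p+1}[S_{i+p}]\ (1\le i\le n),\ \ [S_{n+1-p}]+(-1)^p\textstyle\sum_{j=1}^n[S_j]\,\big\rangle,
\]
together with the analogous set for $q$, where indices are read in $\mathbb{Z}/(n+1)\mathbb{Z}$ and $[S_0]=-\sum_{j=1}^n[S_j]$. Then I would invoke the basis-reversal automorphism $\eta\colon K_0(D^b(\mod A_n))\to K_0(D^b(\mod A_n))$, $[S_i]\mapsto[S_{n+1-i}]$, which is an involution of $\mathbb{Z}^n$ satisfying $\eta([S_0])=[S_0]$. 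Setting $l=n+1-i$ and using (i) and (ii), one checks that $\eta$ sends $[S_i]+(-1)^{p+1}[S_{i+p}]$ to $[S_l]+(-1)^{q+1}[S_{l+q}]$, and sends the special generator $[S_{n+1-p}]+(-1)^p\sum_j[S_j]$ to $[S_{n+1-q}]+(-1)^q\sum_j[S_j]$; hence $\eta$ restricts to an isomorphism $\mathrm{Im}(1+(-1)^{p+1}\phi^p)\xrightarrow{\ \sim\ }\mathrm{Im}(1+(-1)^{q+1}\phi^q)$ and therefore descends to an isomorphism $K_0(C_{n,p}^A)\cong K_0(C_{n,q}^A)$ of cokernels.

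I do not expect a substantial obstacle; the one thing to get right is the modular bookkeeping in the last step --- keeping the $[S_0]$-convention and the reduction of indices modulo $n+1$ compatible with $\eta$, and in particular verifying that $\eta$ carries the special generator to the correct special generator --- but this is the identical computation already performed in the $n$ odd case. As a remark worth including, there is a shortcut avoiding $\eta$ altogether: since $\tau^{2(n+1)}\cong\Sigma^{-4}$ acts trivially on $K_0$, we have $\phi^{n+1}=\mathrm{id}$ on $K_0(D^b(\mod A_n))$, so $\phi^q=\phi^{-p}$; writing $\varepsilon=(-1)^{p+1}=(-1)^{q+1}$ one then has $1+\varepsilon\phi^q=(\varepsilon\phi^{-p})\circ(1+\varepsilon\phi^p)$ with $\varepsilon\phi^{-p}$ an automorphism of $K_0$, which yields the isomorphism of cokernels directly. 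I would keep the $\eta$-argument as the main proof to stay parallel with Proposition \ref{Prop: n odd, p+q equiv 0}, and mention this shortcut only in passing.
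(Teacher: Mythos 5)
Your proof is correct and is essentially the paper's argument: the paper's own proof just observes that $p+q\equiv 0 \pmod{2(n+1)}$ forces $p$ and $q$ to have the same parity and then appeals to the argument of Proposition \ref{Prop: n odd, p+q equiv 0}, which is precisely the $\eta$-reversal of the generators of $\mathrm{Im}(1+(-1)^{p+1}\phi^p)$ from Lemma \ref{Lem: Si in Im} that you carry out in detail. Your parenthetical shortcut using $\phi^{n+1}=\mathrm{id}$ on $K_0$ (so that $1+\varepsilon\phi^{q}$ differs from $1+\varepsilon\phi^{p}$ by the automorphism $\varepsilon\phi^{-p}$) is also valid, but as you say it is an aside rather than a different route.
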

\begin{proof}
The equation $p \equiv q \mod (2(n+1))$ ensures that $p$ is even/odd if and only if $q$ is even/odd. Otherwise the proof is similar as Lemma \ref{Lem: Si in Im}.
\end{proof}

\begin{lemma}\label{Lem: n even, p equiv 0}
	Suppose $p \equiv 0 \mod (2(n+1))$, then $K_0(C_{n,p}^A) \cong \mathbb{Z}^n$.
\end{lemma}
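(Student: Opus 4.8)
The plan is to mimic the argument of Lemma~\ref{Lem: n odd, p equiv 0}, using the fact that in the derived category $D^b(\mod A_n)$ we have the natural isomorphism $\tau^{n+1} \xrightarrow{\sim} \Sigma^{-2}$. First I would observe that since $p \equiv 0 \mod (2(n+1))$, we may write $p = 2a(n+1)$ for some $a \in \mathbb{Z}$, and in particular $p$ is even. Therefore the relevant short exact sequence is the one governed by $1 - \phi^p$, and $K_0(C_{n,p}^A) \cong \mathrm{Coker}(1-\phi^p)$.

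Next I would compute the action of $1 - \phi^p$ on an arbitrary class. For all $X \in D^b(\mod A_n)$,
\[
(1-\phi^p)[X] = [X] - [\tau^p X] = [X] - [\tau^{2a(n+1)} X] = [X] - [\Sigma^{-4a} X] = [X] - [X] = 0,
\]
where the penultimate equality uses $\tau^{2(n+1)} \cong \Sigma^{-4}$ (obtained by squaring $\tau^{n+1}\cong\Sigma^{-2}$), and the final equality uses that in the Grothendieck group $[\Sigma Y] = -[Y]$, so $[\Sigma^{-4a} X] = (-1)^{-4a}[X] = [X]$. By the reduction already recorded in the text it suffices to verify this on a basis $\{[S_1],\ldots,[S_n]\}$, but the computation above is uniform and applies verbatim.

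Consequently $\mathrm{Im}(1 - \phi^p) = 0$, and hence
\[
K_0(C_{n,p}^A) \cong K_0(D^b(\mod A_n)) \cong \mathbb{Z}^n,
\]
the last isomorphism being the standard identification of the Grothendieck group of $D^b(\mod A_n)$ with $\mathbb{Z}^n$ via the classes of the simple modules. I do not anticipate any real obstacle here: the only point requiring care is bookkeeping the exponent of $\Sigma$ (one must use $2(n+1)$ rather than $n+1$ precisely because $n$ is even, so that $p$ is forced to be even and the relevant operator is $1-\phi^p$, not $1+\phi^p$), and confirming that $-4a$ is even so that the shift acts trivially in $K_0$.
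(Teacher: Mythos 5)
Your proposal is correct and is essentially the paper's own proof: write $p=2a(n+1)$, use $\tau^{n+1}\cong\Sigma^{-2}$ to get $(1-\phi^p)[X]=[X]-[\Sigma^{-4a}X]=0$, hence $\mathrm{Im}(1-\phi^p)=0$ and $K_0(C_{n,p}^A)\cong K_0(D^b(\mod A_n))\cong\mathbb{Z}^n$. No gaps; the only difference is your added (correct) bookkeeping remarks about parity and the sign of the shift in $K_0$.
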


\begin{proof}
	Consider that $p$ must be even, then we have $(1 - \phi^p) [X] = [X] - [\tau^p X] = [X] - [\tau^{2a(n+1)} X] = [X] - [\Sigma^{-4a} X] =  [X] - [X] = 0$ for all $X \in D^b(\mod (A_n))$.
	Hence $\mathrm{Im}(1 - \phi^p) =0$, and so $K_0(C_{n,p}^A) \cong K_0( D^b(\mod (A_n)) \cong \mathbb{Z}^n$.
\end{proof}

\begin{lemma}\label{Lem: n even, p equiv n+1}
	Suppose $p \equiv (n+1) \mod (2(n+1))$, then $K_0(C_{n,p}^A) \cong (\mathbb{Z}/2\mathbb{Z})^n$.
\end{lemma}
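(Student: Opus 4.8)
The plan is to reduce the general $p$ to the single case $p = n+1$, and then to observe that $\phi^{n+1}$ acts as the identity on $K_0(D^b(\mod A_n))$, so that $1 + \phi^{n+1}$ becomes multiplication by $2$. First, since $n$ is even, $n+1$ is odd, and the hypothesis $p \equiv (n+1) \pmod{2(n+1)}$ forces $p = (n+1)(2k+1)$ for some $k \geq 0$; in particular $p$ is odd. Because $p \equiv n+1 \pmod{2(n+1)}$, Lemma~\ref{Lem: n even, p,q equiv} gives $K_0(C_{n,p}^A) \cong K_0(C_{n,n+1}^A)$, so it suffices to treat $p = n+1$.

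Next, since $p = n+1$ is odd, the short exact sequence recalled before Lemma~\ref{Lem: Si in Im} yields $K_0(C_{n,n+1}^A) \cong \mathrm{Coker}(1 + \phi^{n+1})$, where $\phi$ is the Coxeter transformation with $\phi([X]) = [\tau X]$. Now $D^b(\mod A_n)$ carries the natural isomorphism $\tau^{n+1} \xrightarrow{\sim} \Sigma^{-2}$ (already used in the proof of Lemma~\ref{Lem: n odd, p,q equiv}); combining this with the relation $[\Sigma X] = -[X]$ valid in the Grothendieck group of any triangulated category, one gets $\phi^{n+1}([X]) = [\Sigma^{-2} X] = [X]$ for all $X$, i.e.\ $\phi^{n+1} = \mathrm{id}$ on $K_0(D^b(\mod A_n)) \cong \mathbb{Z}^n$. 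Hence $1 + \phi^{n+1} = 2\cdot\mathrm{id}$, so $\mathrm{Im}(1 + \phi^{n+1}) = 2\mathbb{Z}^n$ and
$$K_0(C_{n,p}^A) \cong K_0(C_{n,n+1}^A) \cong \mathbb{Z}^n / 2\mathbb{Z}^n \cong (\mathbb{Z}/2\mathbb{Z})^n.$$

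Alternatively one can bypass Lemma~\ref{Lem: n even, p,q equiv} and argue directly from Lemma~\ref{Lem: Si in Im}: with $n$ even one has $(-1)^{p+1} = 1$, and for $p \equiv n+1 \pmod{2(n+1)}$ one checks $i + p \equiv i \pmod{n+1}$ for every $i = 1,\ldots,n$, so the generators of $\mathrm{Im}(1 + \phi^{p})$ are exactly $2[S_1], \ldots, 2[S_n]$, again giving $(\mathbb{Z}/2\mathbb{Z})^n$. I do not expect a genuine obstacle here; the only points requiring care are confirming that $p$ is odd (so that the short exact sequence involving $1+\phi^p$, rather than $1-\phi^p$, is the relevant one) and noting that the wrap-around convention $[S_0] = -\sum_i [S_i]$ of Lemma~\ref{Lem: Si in Im} never enters, since $i + (n+1) \not\equiv 0 \pmod{n+1}$ for $i = 1,\ldots,n$.
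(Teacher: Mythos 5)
Your proposal is correct and follows essentially the same route as the paper: the key step in both is that $\tau^{n+1}\cong\Sigma^{-2}$ acts trivially on $K_0(D^b(\mod A_n))$, so for odd $p$ in this congruence class $1+\phi^{p}$ is multiplication by $2$, giving $\mathrm{Im}(1+\phi^p)=\langle 2[S_1],\ldots,2[S_n]\rangle$ and hence $(\mathbb{Z}/2\mathbb{Z})^n$. The only cosmetic difference is that you first reduce to $p=n+1$ via Lemma~\ref{Lem: n even, p,q equiv}, while the paper performs the computation directly for every $p=(2a+1)(n+1)$ in the class; your alternative check via Lemma~\ref{Lem: Si in Im} is also consistent with this.
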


\begin{proof}
	Consider that $p$ must be odd, then we have $(1 + \phi^p) [X] = [X] + [\tau^p X] = [X] + [\tau^{a(n+1)} X] = [X] + [\Sigma^{-2a} X] = [X] + [X] =2[X]$.
	Hence
	\[
	\mathrm{Im}(1+ \phi^p) = \langle 2[S_1], 2[S_2], \ldots, 2[S_n] \rangle,
	\]
	and so $K_0(C_{n,p}^A) \cong K_0(\D^b(\mod(A_n)))/\mathrm{Im}(1 + \phi^p) \cong (\mathbb{Z}/2\mathbb{Z})^n$.
\end{proof}


\begin{lemma}\label{Lem: n even, p equiv 1}
	Suppose  $p \equiv \pm 1 \mod (2(n+1))$, then $K_0(C_{n,p}^A) = 0$.
\end{lemma}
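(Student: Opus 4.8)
The plan is to reduce to the case $p=1$ using the periodicity results already in hand, and then run essentially the same telescoping computation as in Lemma~\ref{Lem: n odd, p equiv 1}, observing that the parity of $n$ now forces the group to vanish rather than to be $\mathbb{Z}$.

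First I would dispose of the reduction. If $p \equiv 1 \mod 2(n+1)$, then Lemma~\ref{Lem: n even, p,q equiv} applied with $q=1$ gives $K_0(C_{n,p}^A) \cong K_0(C_{n,1}^A)$. If instead $p \equiv -1 \mod 2(n+1)$, then $p+1 \equiv 0 \mod 2(n+1)$, so Proposition~\ref{11} with $q=1$ again yields $K_0(C_{n,p}^A) \cong K_0(C_{n,1}^A)$. Thus in either subcase it suffices to compute $K_0(C_{n,1}^A)$, the Grothendieck group of the classical cluster category of type $A_n$ with $n$ even. At this point one could simply quote Lemma~\ref{36}; but to stay in the self-contained style of this section I would instead argue directly from Lemma~\ref{Lem: Si in Im}.

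For $p=1$ we have $(-1)^{p+1}=1$, so by Lemma~\ref{Lem: Si in Im} the subgroup $\mathrm{Im}(1+\phi)$ is generated by $[S_1]+[S_2], [S_2]+[S_3], \ldots, [S_{n-1}]+[S_n]$ together with $[S_n]+[S_0] = [S_n] - \sum_{j=1}^n [S_j]$. The first $n-1$ relations force $[S_i] = (-1)^{i-1}[S_1]$ in $K_0(C_{n,1}^A)$; since $n$ is even this gives $\sum_{j=1}^n [S_j] = \big(\sum_{j=1}^n (-1)^{j-1}\big)[S_1] = 0$ and $[S_n] = (-1)^{n-1}[S_1] = -[S_1]$. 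Substituting into the last relation yields $[S_n] = \sum_{j=1}^n [S_j] = 0$, hence $[S_1] = 0$, hence $[S_i] = 0$ for all $i$, so $K_0(C_{n,1}^A) = 0$ and therefore $K_0(C_{n,p}^A) = 0$.

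The only genuine care needed is in the bookkeeping. One must check that both congruence classes $p \equiv \pm 1$ are actually covered by the periodicity statements, with the $-1$ case routed through Proposition~\ref{11} rather than Lemma~\ref{Lem: n even, p,q equiv}; and one must track the alternating signs so that it is precisely the evenness of $n$ that makes $\sum_{j=1}^n (-1)^{j-1}$ vanish and forces $[S_n] = -[S_1]$ instead of $[S_n] = [S_1]$. That sign is exactly what distinguishes this statement from Lemma~\ref{Lem: n odd, p equiv 1}, where $n$ odd produced $K_0 \cong \mathbb{Z}$, so I expect the referee-sensitive point to be the reduction step rather than the final computation.
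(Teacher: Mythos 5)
Your proof is correct and follows essentially the same route as the paper: the heart of the argument is the same computation from Lemma~\ref{Lem: Si in Im}, where the relations $[S_i]+[S_{i+1}]$ force $[S_i]=(-1)^{i-1}[S_1]$ and the wrap-around relation $[S_n]-\sum_{j=1}^n[S_j]$ then kills $[S_1]$ because $n$ is even. The only difference is that you first reduce explicitly to $p=1$ via Lemma~\ref{Lem: n even, p,q equiv} and Proposition~\ref{11}, a step the paper leaves implicit (and which cleanly covers the $p\equiv -1$ case), but this does not change the substance of the argument.
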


\begin{proof}
	By Lemma \ref{Lem: Si in Im}, we have the set of relations
	\[
	\{[S_1] + [S_2], [S_2] + [S_3], \ldots, [S_{n-1}] + [S_n], [S_n] - \sum_{i=1}^n [S_i]\}.
	\]
	The first $n-1$ relations in the above set imply that $[S_i] = (-1)^{i+1}[S_1]$.
	Therefore we get that
$$
\begin{aligned}
{[S_n]-\sum_{i=1}^n [S_i]} &= [S_n] -  (\sum_{i=1}^{\frac{n-1}{2}} [S_{2i-1}] +[S_{2i}]) \\
&= [S_n] - (\sum_{i=1}^{\frac{n-1}{2}} (-1)^{2i}[S_1] + (-1)^{2i+1} [S_{2i}]) = [S_n]\\
 &= (-1)^{n+1}[S_1].
\end{aligned}
$$
	Hence,
	\[
	K_0(C_{n,p}^A) \cong \langle [S_1] \mid (-1)^{n+1}[S_1]=0 \rangle =0.
	\]
\end{proof}

As a subcase of Lemma \ref{Lem: n even, p equiv 1}, when $p=1$, we recover the Grothendieck group of the classical cluster category of type $A_n$, with $n$ even.
This is a result originally due to Barot, Kussin and Lenzing \cite{BKL}, see Lemma \ref{36}.

The proofs of the following results are analogous to their respective results when $n$ is odd.

\begin{proposition}\label{Prop: n even, epi to K0}
	Let $n$ be even, and suppose that $0 < p \leq n+1$, and
 $2(n+1) \equiv k \mod (p)$ with $0 < k \leq p$.
	Then there exists an epimorphism
	\[
	\langle [S_1], [S_2],\ldots, [S_m] \rangle \twoheadrightarrow K_0(C_{n,p}^A),
	\]
	where $m$ is the greatest common divisor of $p$ and $k$.
\end{proposition}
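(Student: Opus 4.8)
The plan is to transcribe the proof of Proposition~\ref{Prop: n odd, epi to K0}, replacing the modulus $n+1$ by $2(n+1)$ throughout. The starting point is the presentation of $K_0(C_{n,p}^A) \cong K_0(D^b(\mod (A_n)))/\mathrm{Im}(1 + (-1)^{p+1}\phi^p)$ furnished by Lemma~\ref{Lem: Si in Im}: the group is generated by $[S_1],\dots,[S_n]$ modulo the relations
\[
[S_i] + (-1)^{p+1}[S_{i+p}] = 0, \qquad i = 1,\dots,n,
\]
where the indices are taken in $\mathbb{Z}/(n+1)\mathbb{Z}$ and $[S_0] = -\sum_{j=1}^n [S_j]$. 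I would split these into the \emph{interior} relations, those with $1\le i$ and $i+p\le n$, and the \emph{boundary} relations, those with $n+1-p \le i \le n$, in which the second index wraps around modulo $n+1$. Fixing the parity of $p$ (so that the sign $(-1)^{p+1}$ is constant), the interior relations give $[S_i] = (-1)^{j-i}[S_j]$ whenever $i\equiv j \pmod p$ with $i,j \in \{1,\dots,n\}$; chaining them cuts the generating set down to $\{[S_1],\dots,[S_p]\}$, one class for each residue class modulo $p$. (When $p = n+1$ there are no interior relations, and when $p \mid n+1$ some boundary relations degenerate; both cases are immediate and treated separately.)

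Next I would feed in the boundary relations. The single relation at $i = n+1-p$, where $i+p \equiv 0$, reads $[S_{n+1-p}] + (-1)^p \sum_{j=1}^n [S_j] = 0$, and expanding $\sum_{j=1}^n [S_j]$ in blocks of length $p$ rewrites it as one relation among $[S_1],\dots,[S_p]$. Each of the remaining $p-1$ boundary relations, after substituting $[S_i] = (-1)^{i-\bar i}[S_{\bar i}]$ with $\bar i = i \bmod p$, becomes an identification $[S_a] = \pm[S_b]$ with $a-b \equiv n+1 \pmod p$, and such an identification is available for every residue $a$ except one (the residue of $n+1$). When $p$ is even these are honest equalities, so iterating them identifies all $[S_a]$ lying in a single coset of $\langle n+1\rangle \le \mathbb{Z}/p\mathbb{Z}$. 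When $p$ is odd each step also reverses the sign, so a sign-free identification is obtained by composing two consecutive steps, i.e.\ a net shift by $2(n+1)$, identifying all $[S_a]$ in one coset of $\langle 2(n+1)\rangle \le \mathbb{Z}/p\mathbb{Z}$ (and possibly forcing $2[S_a]=0$ on the survivors, which is harmless for the present statement). In every case the generating set $\{[S_1],\dots,[S_p]\}$ collapses onto at most $\gcd(p, 2(n+1) \bmod p) = \gcd(p,k) = m$ classes, which we may take to be $[S_1],\dots,[S_m]$. Since these classes generate $K_0(C_{n,p}^A)$, the assignment $[S_i] \mapsto [S_i]$ defines the desired epimorphism $\langle [S_1],\dots,[S_m]\rangle \twoheadrightarrow K_0(C_{n,p}^A)$.

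I expect the main obstacle to be precisely this boundary bookkeeping in the odd-$p$ case. One must check that replacing the single-step shift by $n+1$ with the double-step shift by $2(n+1)$ does not discard a relation needed to connect the cosets of $\langle 2(n+1)\rangle$ — although exactly one boundary relation is missing at a single residue, the remaining chain still links every such coset — and that the $2$-torsion possibly introduced by the sign flips, together with the single sum-relation coming from $i = n+1-p$, is correctly tracked. Once these verifications are in place, and the two degenerate ranges $p = n+1$ and $p \mid n+1$ are disposed of, the argument is a line-by-line adaptation of the proof of Proposition~\ref{Prop: n odd, epi to K0}.
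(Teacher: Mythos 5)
Your overall strategy is the one the paper intends (its proof of this proposition is literally ``analogous to Proposition~\ref{Prop: n odd, epi to K0}''), and your even-$p$ half is a correct transcription: the honest identifications $[S_a]=[S_{a-(n+1)}]$, available at every residue except that of $n+1$, collapse the residues modulo $p$ onto the cosets of $\langle n+1\rangle\le\mathbb{Z}/p\mathbb{Z}$, and $\gcd(p,n+1)$ divides $\gcd(p,k)=m$, so ``at most $m$ classes'' holds. The gap is in the odd-$p$ case, exactly at the point you flag and then assert without justification. A double step at residue $a$ uses the boundary relation at $a$ \emph{and} at $a-(n+1)$, so \emph{two} double-step moves are unavailable (at the residues of $n+1$ and of $2(n+1)$), not one; deleting two non-adjacent edges from the cycle that stepping by $2(n+1)$ traces out disconnects it, so the surviving double-step chain does in general fail to link the coset containing those residues. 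Concretely take $n=10$, $p=5$: then $2(n+1)\equiv 2 \pmod 5$, $k=2$, $m=1$. The missing boundary relation sits at $i=n+1-p=6$, i.e.\ at residue $1$, so the only available double-step identifications are $[S_3]\sim[S_1]$, $[S_4]\sim[S_2]$, $[S_5]\sim[S_3]$, leaving the two classes $\{[S_1],[S_3],[S_5]\}$ and $\{[S_2],[S_4]\}$. Your argument therefore only establishes generation by two elements, not by $[S_1]$ alone as required when $m=1$ (the proposition still holds in this example, but not by the step you invoke), so the claimed collapse onto $m$ classes is false as stated.

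The repair is to drop the insistence on sign-free identifications: for surjectivity a relation $[S_a]=\pm[S_b]$ already places $[S_a]$ in the subgroup generated by $[S_b]$. Using the single-step boundary relations as they stand (only the one at the residue of $n+1$ is missing), each coset of $\langle n+1\rangle$ in $\mathbb{Z}/p\mathbb{Z}$ collapses, up to sign, onto a single generator; since $p$ is odd, $\gcd(p,n+1)=\gcd(p,2(n+1))=\gcd(p,k)=m$, which gives generation by $[S_1],\dots,[S_m]$. This is precisely how the odd-$p$ part of Proposition~\ref{Prop: n odd, epi to K0} runs in the paper, which works with $[S_i]+(-1)^r[S_j]$ directly and never needs sign-free steps. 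Incidentally, your blanket claim that each boundary step ``reverses the sign'' is also inaccurate---after substituting the interior chains the sign depends on the number of interior $p$-steps used (in the example above the reduced boundary relations are in fact sign-free)---but once you argue up to sign this is immaterial, and the possible $2$-torsion only matters for the later theorems, not for the epimorphism claimed here.
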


\begin{theorem}\label{Thm: n even, p even}
	Let $n,p,m$ as in Proposition \ref{Prop: n even, epi to K0}, and further suppose that $p$ is even.
	Then
	\[
	K_0(C_{n,p}^A) \cong \langle [S_1], [S_2], \ldots, [S_m] \mid [S_c] + \sum_{j=1}^{m} \alpha_j [S_j] = 0 \rangle,
	\]
	where $n+1 - p \equiv c \mod (m)$, and
\[ \alpha_j =\left\{
\begin{array}{ll}
a+1, & \text{if \;} 0<j \leq b,\\
		a, & \text{if \;} b < j \leq m,
\end{array}
\right.
\]	
for $am+b=n$.	
\end{theorem}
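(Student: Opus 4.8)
The plan is to reproduce, almost verbatim, the argument of Theorem~\ref{Thm: n odd, p even}, with Proposition~\ref{Prop: n even, epi to K0} playing the role of Proposition~\ref{Prop: n odd, epi to K0}; the even-$n$ hypotheses $0<p\le n+1$ and $2(n+1)\equiv k \mod(p)$ are exactly what make this substitution legitimate.

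First I would record that, as $p$ is even, Lemma~\ref{Lem: Si in Im} identifies $\mathrm{Im}(1+(-1)^{p+1}\phi^p)=\mathrm{Im}(1-\phi^p)$ with the subgroup of $K_0(D^b(\mod A_n))$ generated by the elements $[S_i]-[S_{i+p}]$ for $i=1,\dots,n$, where indices are taken in $\mathbb{Z}/(n+1)\mathbb{Z}$ and $[S_0]=-\sum_{i=1}^n[S_i]$. Consequently, in $K_0(C_{n,p}^A)$ one has $[S_j]=[S_{j+p}]$ whenever both indices lie in $\{1,\dots,n\}$, and the ``wrap-around'' relations $[S_i]-[S_{i+p-(n+1)}]$ (for $n+2-p\le i\le n$) combine with these period-$p$ relations, via the modular bookkeeping already performed in Proposition~\ref{Prop: n even, epi to K0}, to force the finer periodicity $[S_j]=[S_{j+m}]$ for every $j$, where $m=\gcd(p,k)$. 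Thus $K_0(C_{n,p}^A)$ is generated by $[S_1],\dots,[S_m]$, and this periodicity exhausts the content of every relation $[S_i]-[S_{i+p}]$ except the single one indexed by $i=n+1-p$, namely $[S_{n+1-p}]-[S_0]=[S_{n+1-p}]+\sum_{j=1}^n[S_j]$.

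Next I would rewrite that surviving relation in terms of the generators $[S_1],\dots,[S_m]$. On one side, $[S_{n+1-p}]=[S_c]$, where $c\in\{1,\dots,m\}$ is the representative of $n+1-p$ modulo $m$. On the other side, writing $n=am+b$ with $0\le b<m$, among $1,\dots,n$ each residue class $j\in\{1,\dots,b\}$ modulo $m$ occurs $a+1$ times and each $j\in\{b+1,\dots,m\}$ occurs $a$ times, so that
\[
\sum_{j=1}^n[S_j]=\sum_{j=1}^b(a+1)[S_j]+\sum_{j=b+1}^m a[S_j]=\sum_{j=1}^m\alpha_j[S_j],
\]
with $\alpha_j$ as in the statement. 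Hence the surviving relation becomes $[S_c]+\sum_{j=1}^m\alpha_j[S_j]=0$, and I would conclude that $K_0(C_{n,p}^A)\cong\langle [S_1],\dots,[S_m]\mid [S_c]+\sum_{j=1}^m\alpha_j[S_j]=0\rangle$.

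The step I expect to require the most attention is not this final computation but the assertion, implicit in the second paragraph above, that the presentation is \emph{exact}: that $\mathrm{Im}(1-\phi^p)\subseteq\mathbb{Z}^n$ coincides with the subgroup generated by $\{[S_i]-[S_{i+m}]:i\}$ together with $[S_c]+\sum_{j=1}^m\alpha_j[S_j]$, so that no additional relation among $[S_1],\dots,[S_m]$ is forced. This is the even-$n$ analogue of the rank/counting verification underlying Proposition~\ref{Prop: n odd, epi to K0}, and is precisely where the hypotheses on $p$ and $k$ (hence on $m=\gcd(p,k)$) enter; once it is in hand, everything else is the routine bookkeeping displayed above.
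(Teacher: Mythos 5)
Your strategy---transporting the proof of Theorem~\ref{Thm: n odd, p even} word for word---is exactly what the paper intends (it gives no separate argument for even $n$), but the transported argument breaks at precisely the step you flag as needing attention, and the break is fatal. For even $p$, Lemma~\ref{Lem: Si in Im} gives the relations $[S_i]-[S_{i+p}]$ with indices taken modulo $n+1$ (on $K_0$ one has $\phi^{n+1}=\mathrm{id}$, since $\tau^{n+1}\cong\Sigma^{-2}$), and there are no signs; chaining these relations therefore identifies $[S_j]$ with $[S_{j+d}]$ where $d=\gcd(p,n+1)$. Because $n+1$ is odd and $p$ is even, the $m$ of Proposition~\ref{Prop: n even, epi to K0} is $m=\gcd(p,2(n+1))=2\gcd(p,n+1)=2d$, so the relations force a strictly \emph{finer} periodicity than the one your presentation records. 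Consequently the group $\langle [S_1],\dots,[S_m]\mid [S_c]+\sum_{j}\alpha_j[S_j]=0\rangle$ is strictly larger than $K_0(\C^A_{n,p})$: its free rank is $m-1=2d-1$, whereas the true free rank is $d-1$. The modulus $2(n+1)$ is only needed to preserve the parity of $p$ (Lemma~\ref{Lem: n even, p,q equiv}) and is the right bookkeeping device for \emph{odd} $p$, where the alternating signs make the relevant orbit one of ``index together with sign''; for even $p$ the correct modulus is $n+1$, exactly as in the odd-$n$ case, where Proposition~\ref{Prop: n odd, epi to K0} happens to produce $m=\gcd(p,n+1)$.

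Concretely, take $n=2$, $p=2$ (allowed in Proposition~\ref{Prop: n even, epi to K0}: $k=2$, $m=2$, $a=1$, $b=0$, $c=1$). Your presentation gives $\langle [S_1],[S_2]\mid 2[S_1]+[S_2]=0\rangle\cong\mathbb{Z}$, but $\mathrm{Im}(1-\phi^2)$ is generated by $2[S_1]+[S_2]$ and $[S_2]-[S_1]$, whose cokernel is $\mathbb{Z}/3\mathbb{Z}$; the same value $\mathbb{Z}/3\mathbb{Z}$ is obtained independently from the AR-triangles of $\C^A_{2,2}$ via Lemma~\ref{21}. So the statement cannot be proved as written (and this is a problem with the statement, not just your argument; note the paper's example for $n=2$ avoids evaluating $K_0(\C^A_{2,2})$). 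The repair is to replace $m$ by $d=\gcd(p,n+1)$: then your residue-counting computation goes through verbatim, the surviving relation becomes $\frac{n+1}{d}\bigl([S_1]+\cdots+[S_d]\bigr)=0$, and one obtains $K_0(\C^A_{n,p})\cong\mathbb{Z}^{d-1}\oplus\mathbb{Z}/\tfrac{n+1}{d}$, consistent with both examples above.
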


\begin{theorem}\label{Thm: n even,  p odd}
	Let $n,p,m$ as in Proposition \ref{Prop: n even, epi to K0}, and further suppose that $p \not\equiv \pm 1 \mod (2(n+1))$ is odd, and that $\frac{p}{m}$ is even.
	Then
\[ K_0(C_{n,p}^A) \cong\left\{
\begin{array}{ll}
\langle [S_1], [S_2], \ldots, [S_m] \mid (-1)^t [S_c] - \sum_{j=1}^{b} [S_j] = 0 \rangle, & \text{if\;}$a$\text{\;is\;even\;},\\
		\langle [S_1], [S_2], \ldots, [S_m] \mid (-1)^t [S_c] - \sum_{j=b+1}^{m} [S_j] = 0 \rangle, & \text{if\;}$a$\text{\;is\;odd\;},
\end{array}
\right.
\]	
	where $n+1 -p = tm +c$, and $am+b=n$.
	Further, if $\frac{p}{m}$ is odd.
	Then,
\[ K_0(C_{n,p}^A) \cong\left\{
\begin{array}{ll}
\langle [S_1], [S_2], \ldots, [S_m] \mid 2[S_j] = 0, (-1)^t [S_c] - \sum_{j=1}^{b} [S_j] = 0\rangle, & \text{if\;}$a$\text{\;is\;even\;},\\
		\langle [S_1], [S_2], \ldots, [S_m] \mid 2[S_j] = 0, (-1)^t [S_c] - \sum_{j=b+1}^{m} [S_j] = 0 \rangle, & \text{if\;}$a$\text{\;is\;odd\;}.
\end{array}
\right.
\]	
\end{theorem}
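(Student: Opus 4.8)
The plan is to follow the proof of Theorem~\ref{Thm: n odd, p odd} essentially line for line, replacing Proposition~\ref{Prop: n odd, epi to K0} by Proposition~\ref{Prop: n even, epi to K0} and the modulus $n+1$ by $2(n+1)$ wherever an equivalence of the parameter is invoked. First I would apply Proposition~\ref{Prop: n even, epi to K0} to obtain the epimorphism $\langle [S_1],\ldots,[S_m]\rangle \twoheadrightarrow K_0(C_{n,p}^A)$. Since $p$ is odd, Lemma~\ref{Lem: Si in Im} yields the relations $[S_i]+[S_{i+p}]=0$ with $i+p$ read in $\mathbb{Z}/(n+1)\mathbb{Z}$ and $[S_0]=-\sum_{i=1}^n[S_i]$; telescoping them in alternating fashion, exactly as in the proof of Proposition~\ref{Prop: n odd, epi to K0}, these descend to the relations $[S_j]+[S_{j+m}]=0$ on the generators, so that $[S_j]=(-1)^s[S_{j+sm}]$ in $K_0(C_{n,p}^A)$ for all $s$.

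Next I would split on the parity of $\frac{p}{m}$, mirroring the first bulleted case of the proof of Theorem~\ref{Thm: n odd, p odd}. Since $m\mid p$, the equivalence class of $[S_i]$ inside $\{[S_1],\ldots,[S_p]\}$ has exactly $\frac{p}{m}$ elements; when $\frac{p}{m}$ is even, the consequence $[S_i]=(-1)^{p/m}[S_i]$ is vacuous and imposes nothing new, whereas when $\frac{p}{m}$ is odd it forces $2[S_i]=0$ for all $0<i\le m$. This dichotomy is the source of the two families of presentations in the statement.

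Then I would treat the single remaining relation, the one coming from the identification $[S_0]=-\sum_{i=1}^n[S_i]$, namely $[S_{n+1-p}]-\sum_{j=1}^n[S_j]=0$. Writing $n+1-p=tm+c$ gives $[S_{n+1-p}]=(-1)^t[S_c]$, and a block of $2m$ consecutive terms of $\sum_{j=1}^n[S_j]$ cancels because $\sum_j\bigl([S_j]+[S_{j+m}]\bigr)=0$; hence $\sum_{j=1}^n[S_j]$ reduces to $\sum_{j=1}^r[S_j]$ with $n-r$ a multiple of $2m$. With $am+b=n$ one gets $r=b$ if $a$ is even and $r=m+b$ if $a$ is odd, and folding $\sum_{j=1}^{m+b}[S_j]$ via $[S_j]=-[S_{j+m}]$ rewrites it in terms of $\sum_{j=b+1}^m[S_j]$ up to sign. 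Combining this with the $\frac{p}{m}$-dichotomy above and matching signs yields the four presentations in the theorem.

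The step I expect to be the main obstacle is the modular bookkeeping in this last paragraph: one must check that the exponent $t$ in $[S_{n+1-p}]=(-1)^t[S_c]$ has the parity dictated by the relations $am+b=n$ and $n+1-p=tm+c$, i.e.\ that $a=2t$ in the even case and $a=2t+1$ in the odd case, so that the reduced sum over $b$ (resp.\ $m-b$) generators is paired with $(-1)^t[S_c]$ with the correct sign. A secondary point to verify is that passing from the modulus $n+1$ to $2(n+1)$ in Proposition~\ref{Prop: n even, epi to K0}---which is forced only because $n$ being even makes $n+1$ odd, so that adding $n+1$ changes the parity of $p$ and hence which of $1\pm\phi^p$ we use---does not affect the internal computation for the fixed $p$ at hand, which it does not, since everything here uses only Lemma~\ref{Lem: Si in Im} and the isomorphism $\tau^{n+1}\cong\Sigma^{-2}$ in $D^b(\mod A_n)$.
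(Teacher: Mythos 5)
Your proposal is correct and matches the paper's approach: the paper gives this argument in detail only for the $n$ odd analogue (Theorem \ref{Thm: n odd, p odd}) and simply declares the $n$ even case analogous, and your proof carries out precisely that analogy, including the observation that the internal computation for a fixed odd $p$ depends only on Lemma \ref{Lem: Si in Im}, with the modulus $2(n+1)$ entering solely through the parity/periodicity reductions of Proposition \ref{Prop: n even, epi to K0}.
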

\begin{example}
Suppose $n=2$. Then we have
$$K_0(C_{2,1}^A)=K_0(C_{2,5}^A)=K_0(C_{2,7}^A)=0$$
by Lemma \ref{Lem: n even, p equiv 1} and Lemma \ref{11},
$$K_0(C_{2,2}^A)=K_0(C_{2,4}^A)=K_0(C_{2,8}^A)$$
and
$$K_0(C_{2,3}^A)=(\mathbb{Z}/2\mathbb{Z})^2$$
by Lemma \ref{Lem: n even, p equiv n+1},
$$K_0(C_{2,6}^A)=\mathbb{Z}^2 $$
by Lemma \ref{Lem: n even, p equiv 0}.
\end{example}
\subsection{The Grothendieck group of $\C_{n,p}^D$}
Throughout this section consider the quiver $D_n$ to be linearly orientated as Figure \ref{quiverdn}. The  Auslander-Reiten quiver of $D^b(\mod (D_n))$ is shown in Figure \ref{o}.
\begin{figure}[H]
	\centering
	\begin{tikzpicture}[scale = 1.35]
		\foreach[count = \i] \txt in {1,...,5}
		\draw (-6.9 + 2*\i,0) -- (-6 + 2*\i,1.9) -- (-4.1 + 2*\i,1.9)-- (-5 + 2*\i, 0) -- (-6.9 + 2*\i,0);
		
		\node at (0.4,0.8) {\footnotesize $\mathcal{M}$};
		\node at (2.5,0.8) {\footnotesize $\Sigma \mathcal{M}$};
		\node at (4.6,0.8) {\footnotesize $\Sigma^2 \mathcal{M}$};
		\node at (-1.7,0.8) {\footnotesize $\Sigma^{-1} \mathcal{M}$};
		\node at (-3.8,0.8) {\footnotesize $\Sigma^{-2} \mathcal{M}$};
		
		\node at (-5,1) {$\cdots$};
		\node at (6,1) {$\cdots$};
	\end{tikzpicture}
	\caption{The Auslander-Reiten quiver of $D^b(\mod (D_n))$, where each component is equivalent to the Auslander-Reiten quiver of $\mathcal{M} = \mod (D_n)$.}
\label{o}
\end{figure}

We write the Auslander-Reiten quiver of $\mathcal{M} = \mod (D_n)$ by using coordinate system, see Figure \ref{ARDynkinD}.
\begin{figure}[H]
\centering
\resizebox{\textwidth}{!}{
$\xymatrix@R=10pt@C=8pt{ 
     &&&& (1,0)  \ar[dr]&& \cdots \ar[dr]&& (n-2,0)\ar[dr]&&(n-1,0) \\
     &&& (1,2)\ar[ur]\ar[r]\ar[dr]&(1,1)\ar[r]&\cdots\ar[ur]\ar[dr]\ar[r] & \cdots \ar[r]& (n-2,2)\ar[ur]\ar[dr]
     \ar[r] & (n-2,1)\ar[r]&(n-1,2)\ar[ur]\ar[r]&(n-1,1) \\
     &&\cdots \ar[ur]\ar[dr]&& \cdots \ar[ur]\ar[dr]&& \cdots\ar[ur]\ar[dr] && \cdots \ar[ur]\\
     & (1,n-2) \ar[dr] \ar[ur] & & \cdots  \ar[ur]\ar[dr]&& (n-2,n-2) \ar[ur]\ar[dr]&&(n-2,n-1)\ar[ur]\\
   (1,n-1)\ar[ur]&&\cdots \ar[ur]&& (n-2,n-1) \ar[ur]&& (n-1,n-1)\ar[ur] \\
}$}
\caption{The Auslander--Reiten quiver of $\mathrm{mod}(D_n)$.}
\label{ARDynkinD}
\end{figure}
The projective objects are $P_j=(1,j)$ for $0\leq j\leq n-1$, and the simple objects are $S_i=(n-i,n-1)$ for $2\leq i\leq n-1$, $S_1=(n-1,0)$ and $S_0=(n-1,1)$ for odd $n$; $S_1=(n-1,1)$ and $S_0=(n-1,0)$ for even $n$.

Throughout this section, we take the basis of $K_0(D^b(\mod (D_n)))$ to be
$$\{[S_0], [S_1], \ldots, [S_{n-1}]\},$$ where $S_i \in \mod (D_n) \subset D^b(\mod (D_n))$ corresponds to the simple module at vertex $i \in Q_0$.
Whenever we write $[S_i]$, we shall always consider $i \in \mod (n)$.
We note the classical result on the image of the projective objects in $K_0(\mod (D_n))$
\[ [P_i]=\left\{
\begin{array}{ll}
\sum_{j=i}^{n-1} [S_j] & \text{if\;}  i \geq 2,\\[3mm]
		\sum_{j=2}^{n-1} [S_j]+[S_{i}]& \text{if\;} i=0,1,
\end{array}
\right.
\]	
as well as the analogue for injective objects
\[ [I_i]=\left\{
\begin{array}{ll}
\sum_{j=0}^{i} [S_j] & \text{if\;} i \geq 2,\\[3mm]
       {[S_i]} & \text{if\;} i=0,1.
\end{array}
\right.
\]
	
\begin{lemma}\label{simple}
	Let $s = i + (n-1)t$ for $1 \leq i \leq n-1$. Then
\[ [(s,n-1)]=\left\{
\begin{array}{ll}
(-1)^t [S_{n-i}] & \text{if\;} 1 \leq i < n-1,\\[2mm]
		(-1)^t \sum_{i=0}^{n-1} [S_i] & \text{if\;} i=n-1.
\end{array}
\right.
\]	

\end{lemma}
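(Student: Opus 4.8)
The plan is to track the indecomposable $(s,n-1)$ under the Coxeter transformation $\phi$ on $K_0(D^b(\mod(D_n)))$, reducing the general index $s = i + (n-1)t$ to the base case $t=0$, where $(i,n-1)$ lives in $\mathrm{mod}(D_n)$ itself. First I would recall from Figure \ref{ARDynkinD} that the objects $(i,n-1)$ for $1 \le i \le n-1$ are precisely the simples of $\mathrm{mod}(D_n)$, up to reindexing: for $1 \le i < n-1$ one has $(i,n-1) = S_{n-i}$ (with the caveat about the two exceptional simples $S_0, S_1$ at the fork, which are $(n-1,0)$ and $(n-1,1)$ — these are handled by the convention $i \in \mathbb{Z}/n\mathbb{Z}$ fixed just before the lemma), and $(n-1,n-1) = S_1$. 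So for $t=0$ the statement should read off directly from the definitions, modulo checking that the index $n-i$ lands correctly in the exceptional range; this is the part where I would be careful to match the orientation of the quiver in Figure \ref{quiverdn} and the stated positions of $S_0, S_1$.

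Next, I would establish the inductive step relating $(s+(n-1), n-1)$ to $(s,n-1)$. The key observation is that $\tau^{-(n-1)}$ acts on $D^b(\mod(D_n))$ essentially as a shift: since $D_n$ has Coxeter number $h = 2(n-1)$ and the Serre functor is $\nu = \tau\Sigma$ with $\nu^{\,?}$ controlling the periodicity of the derived category, one has an isomorphism of functors $\tau^{n-1} \cong \Sigma^{-2}\circ w$ or, more usefully at the level of $K_0$, $\phi^{\,n-1}$ is $-1$ on $K_0(\mod(D_n))$ when $n$ is even and a reflection-type map when $n$ is odd — in any case, incrementing $s$ by $n-1$ multiplies the class $[(s,n-1)]$ by $-1$. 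Concretely I would argue: moving from $(s,n-1)$ to $(s+(n-1),n-1)$ in the AR-quiver of $D^b(\mod(D_n))$ corresponds to applying $\tau^{-(n-1)}$, and on the Grothendieck group $[\tau^{-(n-1)} X] = \phi^{-(n-1)}[X]$; since $\phi$ has order $h = 2(n-1)$ on $K_0(D^b(\mod(D_n)))$ and $\phi^{h}= \mathrm{id}$ while $\phi^{(n-1)} = \phi^{h/2}$ is the unique involution equal to $-\mathrm{id}$ on the root lattice of $D_n$ (a standard fact: the longest element $w_0$ of the Weyl group of $D_n$ acts as $-1$ precisely when $n$ is even, but $\phi^{h/2} = -w_0$ always acts as $-\mathrm{id}$ on the root lattice when $-1 \in W$, and more robustly $c^{h/2} = -1$ for the Coxeter element $c$ whenever $h$ is even — which it is here), we get $[(s+(n-1),n-1)] = -[(s,n-1)]$. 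Iterating $t$ times gives the factor $(-1)^t$.

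Finally, for the exceptional case $i = n-1$: here $(n-1,n-1) = S_1$ at $t=0$, but the statement claims $(s,n-1) = (-1)^t\sum_{i=0}^{n-1}[S_i]$, so I must reconcile $[S_1]$ with $\sum_i [S_i]$ — this tells me the intended reading is different, namely that when $i = n-1$ the relevant vertex in the AR-quiver is the one sitting at the "apex" whose class, after passing through the projective $P_1$ or injective $I_0$, equals $\pm$ the sum of all simples (this is the exact $D_n$-analogue of the identity $[\Sigma^{-1}P_1] = -\sum[S_i]$ used in Lemma \ref{Lem: Si in Im} for type $A$). So I would instead identify $(n-1,n-1)$ with the injective or projective whose dimension vector is $\sum_{i=0}^{n-1}[S_i]$ — reading Figure \ref{ARDynkinD}, the object $(n-1,1)$ is $I_{n-1}$ or a $\tau$-shift thereof, with $[I_{n-1}] = \sum_{j=0}^{n-1}[S_j]$ — then apply the same $(-1)^t$ periodicity argument. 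The main obstacle I anticipate is bookkeeping: correctly matching the coordinate labels $(i,j)$ in Figure \ref{ARDynkinD} to the modules $S_k, P_k, I_k$ under the chosen orientation, keeping straight the $n$-even versus $n$-odd swap of $S_0 \leftrightarrow S_1$, and verifying that the "shift by $n-1$ in the first coordinate $=$ $\tau^{-(n-1)}$" claim holds uniformly including at the fork vertices. Once the dictionary between coordinates and AR-positions is pinned down, the sign $(-1)^t$ follows formally from $c^{h/2} = -\mathrm{id}$ on $K_0$.
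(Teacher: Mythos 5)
Your overall strategy---identify the objects $(i,n-1)$, $1\le i\le n-1$, with explicit modules of $\mod(D_n)$ and then propagate along the first coordinate using the $(n-1)$-st power of $\tau$, which acts as a shift up to sign---is the same as the paper's. But the two places where actual verification is needed are exactly where your proposal goes wrong or stays vague. First, the case $i=n-1$: you begin by asserting $(n-1,n-1)=S_1$, then, after noticing the mismatch with the statement, guess that $(n-1,1)$ is $I_{n-1}$ ``or a $\tau$-shift thereof'' and leave the dictionary ``to be pinned down''. In the coordinates of Figure~\ref{ARDynkinD} the correct identifications are $S_j=(n-j,n-1)$ for $2\le j\le n-1$ and $(n-1,n-1)\cong I_{n-1}$, with $[I_{n-1}]=\sum_{i=0}^{n-1}[S_i]$; the object $(n-1,1)$ is one of the fork simples $S_0$ or $S_1$ (depending on the parity of $n$), not an injective of full support. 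Since the $i=n-1$ case of the lemma is precisely the identity $[(n-1,n-1)]=\sum_{i=0}^{n-1}[S_i]$ up to the sign $(-1)^t$, your proposal does not actually establish that case.

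Second, your sign argument rests on the claim that $c^{h/2}=-\mathrm{id}$ on the root lattice ``whenever $h$ is even''. This fails for $D_n$ with $n$ odd: there $-1\notin W(D_n)$, and $\tau^{n-1}\cong\sigma\Sigma^{-1}$ with $\sigma$ the diagram automorphism exchanging the two fork vertices, so on $K_0$ one has $\phi^{n-1}=-\sigma_*$, which is not $-\mathrm{id}$. The needed conclusion $[(s+(n-1),n-1)]=-[(s,n-1)]$ does hold, but only because the classes occurring in this lemma---$[S_j]$ with $j\ge 2$ and $\sum_{i=0}^{n-1}[S_i]$---are invariant under $\sigma_*$; you assert ``in any case'' the sign is $-1$ without making this observation, which is the point that actually needs checking for odd $n$. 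For comparison, the paper's proof is short and direct: it records $S_j=(n-j,n-1)$ for $2\le j\le n-1$ and $(n-1,n-1)\cong I_{n-1}$, uses $\tau^{n-1}\cong\Sigma^{-1}$ (with the parity-dependent labelling of $S_0,S_1$ built into its coordinate conventions), and computes $[(s,n-1)]=[\tau^{-(n-1)t}X]=[\Sigma^{-t}X]=(-1)^t[X]$ for $X=S_{n-i}$ or $X=I_{n-1}$.
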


\begin{proof}
Note that $S_j=(n-j,n-1)$ for $2\leq j\leq n-1$,
$$(n-1,n-1) \cong I_{n-1},~[I_{n-1}]=\sum_{i=0}^{n-1} [S_i],$$
and the natural isomorphism $\tau^{n-1} \xrightarrow{\sim}  \Sigma^{-1}$. If $1 \leq i < n-1$, then
$$[(s,n-1)]=[\tau^{-(n-1)t}S_{n-i}]=[\Sigma^{-t}S_{n-i}]=(-1)^t [S_{n-i}].$$
If $i=n-1$, then
$$[(s,n-1)]=[\tau^{-(n-1)t}I_{n-1}]=[\Sigma^{-t}I_{n-1}]=(-1)^t \sum_{i=0}^{n-1} [S_i].$$
\end{proof}

Now we consider the action of $\phi^p$ on simple objects $S_0$ and $S_1$.

\begin{lemma}\label{simple1}
Let $S_a\in\mod(D_n)$ be the simple object associated to the vertex $a\in\{0,1\}$. Then
$$\phi^p[S_a]=[\tau^pS_a]=\sum_{j=2}^{p+1} [S_j]+[S_{a+p}]$$
in $K_0(D^b(\mod(D_n)))$, where $p=1,\cdots,n-2$ and we consider $a+p$ modulo 2.
\end{lemma}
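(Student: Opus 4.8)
The plan is to prove the identity by induction on $p$, after first determining how $\phi$ acts on two relevant families of basis vectors. From the formulas for $[P_i]$ and $[I_i]$ recorded above one has $[S_a]=[P_a]-[P_2]$ for $a\in\{0,1\}$ and $[S_k]=[P_k]-[P_{k+1}]$ for $2\le k\le n-2$. Since $\tau P_i\cong\Sigma^{-1}I_i$ in $D^b(\mod H_n)$, we have $\phi[P_i]=[\tau P_i]=-[I_i]$; applying $\phi$ to the previous expressions and using $[I_a]=[S_a]$, $[I_2]=[S_0]+[S_1]+[S_2]$, and $[I_{k+1}]-[I_k]=[S_{k+1}]$ yields
\[
\phi[S_a]=-[I_a]+[I_2]=[S_2]+[S_{a+1}]\qquad(a\in\{0,1\}),
\]
\[
\phi[S_k]=-[I_k]+[I_{k+1}]=[S_{k+1}]\qquad(2\le k\le n-2),
\]
where $a+1$ is taken modulo $2$. (The first identity is also the Auslander--Reiten sequence $0\to\tau S_a\to I_2\to S_a\to 0$ ending at the injective module $S_a=I_a$, and the second records that $\tau$ shifts the linear tail $2\to 3\to\cdots\to(n-1)$ by one vertex; either derivation is available from the Auslander--Reiten quiver of $\mod(D_n)$ and the coordinate conventions fixed before the lemma.)

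Next I would run the induction. For $p=1$ the claim is precisely the first identity above. Assume $1\le p\le n-3$ and $\phi^p[S_a]=[S_{a+p}]+\sum_{j=2}^{p+1}[S_j]$. Applying $\phi$ and using that $p+1\le n-2$, so that $\phi[S_j]=[S_{j+1}]$ for every $j$ with $2\le j\le p+1$, the sum is sent to $\sum_{j=3}^{p+2}[S_j]$, while $[S_{a+p}]$, which is one of $[S_0]$ and $[S_1]$, is sent to $[S_2]+[S_{a+p+1}]$. Adding these gives $\phi^{p+1}[S_a]=[S_{a+p+1}]+\sum_{j=2}^{p+2}[S_j]$, which is the asserted formula for $p+1$. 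Hence it holds for all $1\le p\le n-2$, as claimed.

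The computation is almost entirely bookkeeping, and there is no deep obstacle; the only points that need care are the parity alternation in the index $a+p$, which is exactly the phenomenon that $\phi$ exchanges the roles of the two fork vertices $0$ and $1$, and the range $p\le n-2$, which is precisely what is needed to keep $a+p$ inside $\{0,1\}$ and to keep every index occurring in the sum inside the range $2\le j\le n-1$ where the formula $\phi[S_j]=[S_{j+1}]$ (equivalently $[I_{j+1}]-[I_j]=[S_{j+1}]$) applies. Consequently no separate boundary case arises.
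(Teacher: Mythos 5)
Your proposal is correct, and all the computations check out against the conventions fixed before the lemma: $[S_a]=[P_a]-[P_2]$ for $a\in\{0,1\}$, $[S_k]=[P_k]-[P_{k+1}]$ for $2\le k\le n-2$, and $\phi[P_i]=-[I_i]$ (from $\tau P_i\cong\Sigma^{-1}I_i$ in the derived category) do give $\phi[S_a]=[S_2]+[S_{a+1}]$ and $\phi[S_k]=[S_{k+1}]$, after which the induction is pure bookkeeping and the index ranges are handled correctly. The paper's proof is also an induction on $p$ with the same base case, but it is organized differently at the inductive step: there one observes $[\tau^k S_a]=[P_{a+k}]-[P_{k+2}]$, upgrades this to an actual projective resolution $0\to P_{k+2}\to P_{a+k}\to\tau^k S_a\to 0$ (using heredity and the identification of $\tau^k S_a$ as a module), and then applies the Nakayama functor to obtain $0\to\tau^{k+1}S_a\to I_{k+2}\to I_{a+k}\to 0$. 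Your version works entirely inside $K_0$: you compute the Coxeter action on the relevant basis classes once and iterate linearly, which avoids having to know that the $K_0$-identity for $\tau^k S_a$ lifts to a genuine projective presentation at every step; the paper's version, in exchange, keeps track of the actual modules $\tau^k S_a$ and the exact sequences realizing the relations. One small caveat: your parenthetical identification of $0\to\tau S_a\to I_2\to S_a\to 0$ as the Auslander--Reiten sequence ending at $S_a$ is not needed and is stated without justification; what your argument actually uses (and what the paper proves) is only that this is a short exact sequence coming from applying $\nu$ to the projective resolution of $S_a$, so the class identity $[\tau S_a]=[I_2]-[I_a]$ is all that matters.
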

\begin{proof}
We prove the claim via induction. Note that there is an isomorphism
$$K_0(D^b(\mod(D_n)))\cong K_0(\mod(D_n)).$$
Let $p=1$ and consider the projective resolution
$$0\rightarrow P_2\rightarrow P_a\rightarrow S_a\rightarrow 0.$$
Then we have the following exact sequence
$$0\rightarrow \tau S_a\rightarrow I_2\rightarrow I_a\rightarrow 0,$$
so $[\tau S_a]=[I_2]-[I_a]=[S_2]+[S_{a+1}]$, where we consider $a+1$ modulo 2.

Now suppose that our claim is true for $p=k<n-2$, and
$$[\tau^k S_a]=\sum_{j=2}^{k+1} [S_j]+[S_{a+k}].$$
Observe that $[\tau^k S_a]=[P_{a+k}]-[P_{k+2}]$, as $KD_n$ is an hereditary algebra, this implies that we have the projective resolution
$$0\rightarrow P_{k+2}\rightarrow P_{a+k}\rightarrow \tau^k S_a\rightarrow 0.$$
Again, by classical Auslander-Reiten theory, we have the short exact sequence
$$0\rightarrow \tau^{k+1} S_a\rightarrow I_{k+2}\rightarrow I_{a+k}\rightarrow 0,$$
so we find
$$[\tau^{k+1} S_a]=[I_{k+2}]-[I_{a+k}]=\sum_{j=0}^{k+2} [S_j]-[I_{a+k}]=\sum_{j=2}^{k+2} [S_j]-[I_{a+k+1}].$$
\end{proof}

\begin{proposition}\label{DLem: p,q equiv}
	Let $p,q \geq 1$ with $p \equiv q \mod (2(n-1))$.
Then $K_0(C_{n,p}^D) \cong K_0(C_{n,q}^D)$.
\end{proposition}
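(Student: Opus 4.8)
The plan is to reduce the statement to the cokernel description of the Grothendieck group established at the beginning of this section and then exploit the fractional Calabi--Yau property of $D^b(\mod(D_n))$. Recall that $K_0(C_{n,p}^D)\cong \text{Coker}(1+(-1)^{p+1}\phi^p)$, where $\phi^p[X]=[\tau^pX]$ and the underlying group is $K_0(D^b(\mod(D_n)))$. Hence it suffices to prove the equality of subgroups $\mathrm{Im}(1+(-1)^{p+1}\phi^p)=\mathrm{Im}(1+(-1)^{q+1}\phi^q)$, which already yields an isomorphism of the two cokernels, i.e. $K_0(C_{n,p}^D)\cong K_0(C_{n,q}^D)$. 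This is the exact $D_n$-analogue of Lemma~\ref{Lem: n even, p,q equiv} (and Proposition~\ref{11}) in type $A$.

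First I would record two elementary observations. Since $2(n-1)$ is even, the congruence $p\equiv q\mod 2(n-1)$ forces $p$ and $q$ to have the same parity, so the signs $(-1)^{p+1}$ and $(-1)^{q+1}$ coincide; write $\varepsilon$ for this common sign, so that the relevant operator is $1+\varepsilon\phi^p$ in both cases. Second, squaring the natural isomorphism $\tau^{n-1}\xrightarrow{\sim}\Sigma^{-1}$ from the proof of Lemma~\ref{simple} gives $\tau^{2(n-1)}\cong\Sigma^{-2}$, hence $\tau^{2a(n-1)}\cong\Sigma^{-2a}$ for every $a\in\mathbb{Z}$; since $\Sigma$ acts as $-1$ on $K_0(D^b(\mod(D_n)))$, this means $[\Sigma^{-2a}Y]=[Y]$ for all $Y$, i.e. $\phi^{2(n-1)}$ is the identity on the Grothendieck group.

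Next, writing $p=q+2a(n-1)$ for some $a\in\mathbb{Z}$, I would compute, for an arbitrary object $X\in D^b(\mod(D_n))$, the chain of equalities $(1+\varepsilon\phi^p)[X]=[X]+\varepsilon[\tau^{q+2a(n-1)}X]=[X]+\varepsilon[\Sigma^{-2a}(\tau^qX)]=[X]+\varepsilon[\tau^qX]=(1+\varepsilon\phi^q)[X]$, using the second observation above. Thus $1+\varepsilon\phi^p$ and $1+\varepsilon\phi^q$ are literally the same endomorphism of $K_0(D^b(\mod(D_n)))$, so their images and hence their cokernels agree, which completes the proof. The argument carries essentially no difficulty; the only step requiring care is the parity check in the previous paragraph, and it is precisely there that the factor $2$ in the modulus $2(n-1)$ is indispensable --- without it, $p$ and $q$ could have opposite parities, forcing a comparison of $\text{Coker}(1-\phi^p)$ with $\text{Coker}(1+\phi^q)$, which need not be isomorphic.
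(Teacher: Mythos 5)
Your proposal is correct and follows essentially the same route as the paper: writing $p=q+2a(n-1)$, using $\tau^{2a(n-1)}\cong\Sigma^{-2a}$ (so that $\Sigma^{-2a}$ acts trivially on $K_0(D^b(\mod(D_n)))$) to show $1\pm\phi^p$ and $1\pm\phi^q$ coincide as endomorphisms, and concluding that the images, hence the cokernels $K_0(C_{n,p}^D)\cong K_0(C_{n,q}^D)$, agree. Your explicit remarks on the parity check and on why $\Sigma^{-2a}$ acts as the identity on the Grothendieck group only make explicit what the paper leaves implicit.
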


\begin{proof}
Since $p \equiv q \mod (2(n-1))$,  $p = q +2a(n-1)$ for some $a \in \mathbb{Z}$. Then $p$ is even/odd if and only if $q$ is even/odd.
We have
$$
\begin{aligned}
(1\pm \phi^p)[X] &= [X] \pm [\tau^p X] = [X] \pm [\tau^{q + 2a(n-1)}X]\\
 &= [X] \pm [\Sigma^{-2a} (\tau^q X)] = [X] \pm [\tau^q X] \\
 &= (1 \pm \phi^q)[X]
\end{aligned}
$$
for all $X \in D^b(\mod {D_n})$.
Therefore $\mathrm{Im}(1 \pm \phi^p) = \mathrm{Im}(1 \pm \phi^q)$, so $K_0(C_{n,p}^D) \cong K_0(C_{n,q}^D)$ whenever $p \equiv q \mod (2(n-1))$.
\end{proof}

\begin{proposition}\label{DLem: p equiv 0}
	Let $p \equiv 0 \mod (2(n-1))$.
	Then $K_0(C_{n,p}^D) \cong \mathbb{Z}^n$.
\end{proposition}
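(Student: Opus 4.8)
The plan is to reduce the computation to the cokernel of an explicit endomorphism of $K_0(D^b(\mod(D_n)))$, exactly as in the proofs of Lemmas \ref{Lem: n odd, p equiv 0} and \ref{Lem: n even, p equiv 0}. First I would observe that, since $p \equiv 0 \mod 2(n-1)$, we may write $p = 2a(n-1)$ for some positive integer $a$; in particular $p$ is even, so the relevant short exact sequence is the one involving $1 - \phi^p$, which gives $K_0(C_{n,p}^D) \cong \mathrm{Coker}(1 - \phi^p) = K_0(D^b(\mod(D_n)))/\mathrm{Im}(1-\phi^p)$. The only bookkeeping point worth checking is precisely this parity: $2(n-1)\mid p$ forces $p$ even, so we are in the $1-\phi^p$ case rather than the $1+\phi^p$ case.

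Next I would invoke the natural isomorphism $\tau^{n-1} \xrightarrow{\sim} \Sigma^{-1}$ in $D^b(\mod(D_n))$ (the same isomorphism already used in Lemma \ref{simple}), raised to the $2a$-th power, so that $\tau^p = \tau^{2a(n-1)} \cong \Sigma^{-2a}$. Passing to the Grothendieck group, where the shift functor acts as multiplication by $-1$, we obtain $\phi^p[X] = [\tau^p X] = [\Sigma^{-2a}X] = (-1)^{2a}[X] = [X]$ for every $X \in D^b(\mod(D_n))$. Hence $(1 - \phi^p)[X] = [X] - [\tau^p X] = 0$ for all such $X$, so $\mathrm{Im}(1 - \phi^p) = 0$.

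Finally I would conclude $K_0(C_{n,p}^D) \cong K_0(D^b(\mod(D_n))) \cong \mathbb{Z}^n$, the last isomorphism holding because $D_n$ has $n$ vertices and hence its Grothendieck group is free abelian of rank $n$. I do not anticipate any real obstacle: the argument is a one-line consequence of the periodicity $\tau^{n-1} \cong \Sigma^{-1}$ combined with the elementary parity observation, and it runs in complete parallel to the type $A$ analogues already established.
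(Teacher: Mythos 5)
Your proposal is correct and follows essentially the same route as the paper: reduce to $\mathrm{Coker}(1-\phi^p)$ using the parity of $p$, use the periodicity $\tau^{2a(n-1)}\cong\Sigma^{-2a}$ (which you obtain by raising $\tau^{n-1}\cong\Sigma^{-1}$, already used in Lemma~\ref{simple}, to the $2a$-th power) to get $(1-\phi^p)[X]=0$ for all $X$, and conclude $K_0(C_{n,p}^D)\cong K_0(D^b(\mod(D_n)))\cong\mathbb{Z}^n$. The only cosmetic difference is that the paper invokes $\tau^{2a(n-1)}\cong\Sigma^{-2a}$ directly rather than via the square of $\tau^{n-1}\cong\Sigma^{-1}$, which is immaterial.
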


\begin{proof}
	Since $p \equiv 0 \mod (2(n-1))$,  $p =2a(n-1)$ is even for some $a \in \mathbb{Z}$.
	Then
$$(1 - \phi^p) [X] = [X] - [\tau^p X] = [X] - [\tau^{2a(n-1)} X]
= [X] - [\Sigma^{-2a} X] =0$$ for all $X \in D^b(\mod {D_n})$.
	Hence $\mathrm{Im}(1 - \phi^p) =0$, and so $K_0(C_{n,p}^D) \cong K_0(\D^b(\mod {D_n})) \cong \mathbb{Z}^n$.
\end{proof}

\begin{proposition}
	Let $p \equiv n-1 \mod (2(n-1))$.
	Then
	\[ K_0(C_{n,p}^D) =\left\{
	\begin{array}{ll}
		\mathbb{Z}^n & \text{if\;$n$\;is\;odd},\\[2mm]
		(\mathbb{Z}/2\mathbb{Z})^n & \text{if\;$n$\;is\;even}.
	\end{array}
	\right.
	\]	
\end{proposition}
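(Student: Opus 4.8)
The plan is to deduce $K_0(C_{n,p}^D)$ from the presentation $K_0(C_{n,p}^D)\cong\mathrm{Coker}(1\pm\phi^p)$ set up at the start of this section, once we know how $\phi^p$ acts on $K_0(D^b(\mod D_n))\cong\mathbb{Z}^n$. First I would unpack the hypothesis: $p\equiv n-1\pmod{2(n-1)}$ says precisely that $p=(2k+1)(n-1)$ for some integer $k\ge 0$, i.e.\ $p$ is an odd multiple of $n-1$. In particular $p$ has the same parity as $n-1$, so $p$ is even exactly when $n$ is odd and odd exactly when $n$ is even; this tells us which of the two operators $1-\phi^p$ (for $p$ even) and $1+\phi^p$ (for $p$ odd) controls the Grothendieck group.

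Next I would compute $\phi^p$ on $K_0(D^b(\mod D_n))$. By Lemma~\ref{simple} there is a natural isomorphism $\tau^{n-1}\xrightarrow{\sim}\Sigma^{-1}$, so $\tau^p\cong(\tau^{n-1})^{2k+1}\cong\Sigma^{-(2k+1)}$, and since $[\Sigma X]=-[X]$ in the Grothendieck group of any triangulated category this gives $\phi^p[X]=[\tau^p X]=(-1)^{2k+1}[X]=-[X]$ for all $X$; that is, $\phi^p=-\mathrm{id}$. It is worth contrasting this with Proposition~\ref{DLem: p equiv 0}, where $p$ is an \emph{even} multiple of $n-1$ and one gets $\phi^p=\mathrm{id}$: the extra sign here comes exactly from the single leftover factor $\tau^{n-1}\cong\Sigma^{-1}$, and it is the reason this case behaves differently.

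Finally, with $\phi^p=-\mathrm{id}$ the relevant operator is either $1+\phi^p=0$ or $1-\phi^p=2\,\mathrm{id}$ on $\mathbb{Z}^n$, whose cokernels are $\mathbb{Z}^n$ and $(\mathbb{Z}/2\mathbb{Z})^n$ respectively; selecting the right one according to the parity of $p$---hence of $n$, by the first paragraph---produces the two cases of the statement. The step I would treat most carefully is the appeal to Lemma~\ref{simple} when $n$ is odd: there the identification of the two ``fork'' simples $S_0,S_1$ with their Auslander--Reiten coordinates $(n-1,0),(n-1,1)$ is parity-dependent, reflecting the order-two diagram automorphism of $D_n$, so I would verify $\phi^p[S_0]$ and $\phi^p[S_1]$ directly (for instance via the inductive computation of $[\tau^j S_a]$ in Lemma~\ref{simple1} and the injective-object class formulas) to be sure the odd case really produces the group claimed. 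Granting that, the argument is a short cokernel computation, entirely parallel to Propositions~\ref{DLem: p,q equiv} and~\ref{DLem: p equiv 0}.
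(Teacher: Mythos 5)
Your proposal takes exactly the same route as the paper's own proof (reduce to the cokernel of $1\pm\phi^p$, then claim $\phi^p=-\mathrm{id}$ via $\tau^{n-1}\cong\Sigma^{-1}$), but as written it does not establish the statement, and the two places where it breaks are precisely the ones you passed over. First, the parity bookkeeping at the end comes out inverted. Granting $\phi^p=-\mathrm{id}$: if $n$ is odd then $p=(2k+1)(n-1)$ is even, so by the exact sequences at the start of Section 3 the relevant operator is $1-\phi^p=2\cdot\mathrm{id}$, whose cokernel is $(\mathbb{Z}/2\mathbb{Z})^n$; if $n$ is even then $p$ is odd and the operator is $1+\phi^p=0$, with cokernel $\mathbb{Z}^n$. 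Thus ``selecting the right one according to the parity of $p$'' yields $(\mathbb{Z}/2\mathbb{Z})^n$ for $n$ odd and $\mathbb{Z}^n$ for $n$ even, i.e.\ the two cases of the statement interchanged; your last sentence asserts agreement with the statement without noticing this mismatch. (The same tension is present in the paper's proof, whose displayed formulas likewise give the two cases the other way around from its stated conclusion.)

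Second, the input you flagged and then granted is false for odd $n$: $\tau^{n-1}\cong\Sigma^{-1}$, equivalently $\phi^{n-1}=-\mathrm{id}$ on $K_0(D^b(\mod KD_n))$, holds only when $n$ is even. For $n$ odd, $\tau^{n-1}$ is $\Sigma^{-1}$ only up to the auto-equivalence coming from the order-two diagram automorphism, and on $K_0$ the involution $\phi^{n-1}$ has a one-dimensional $(+1)$-eigenspace: the exponents of $D_n$ are $1,3,\ldots,2n-3$ together with $n-1$, and $n-1$ is even exactly when $n$ is odd, so $\phi^{h/2}$ is not $-\mathrm{id}$ in that case. Consequently, for $n$ odd the map $1-\phi^p=1-\phi^{n-1}$ has rank $n-1$, its cokernel has free rank one, and so it can never be $\mathbb{Z}^n$; the deferred verification of $\phi^{n-1}[S_0]$ and $\phi^{n-1}[S_1]$ (note Lemma \ref{simple1} only reaches $p\le n-2$, so it does not cover this power) is exactly the check that fails. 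A concrete cross-check inside the paper: $D_3$ is an orientation of $A_3$, so $K_0(C_{3,2}^D)\cong K_0(C_{3,2}^A)\cong\mathbb{Z}\oplus\mathbb{Z}/2\mathbb{Z}$ by Theorem \ref{Thm: n odd, p even} (or by a direct Coxeter-matrix computation), not $\mathbb{Z}^3$. So the odd case requires a genuinely different computation (whose answer has free rank one, e.g.\ $\mathbb{Z}\oplus\mathbb{Z}/2\mathbb{Z}$ for $n=3$), while the even case, carried out with the correct parity, gives $\mathbb{Z}^n$ rather than $(\mathbb{Z}/2\mathbb{Z})^n$; in particular your argument cannot be repaired to yield the proposition as stated.
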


\begin{proof}
	Via the natural isomorphism $\tau^{n-1} \xrightarrow{\sim} \Sigma^{-1}$, we have
	\[
	(1 - \phi^{(2a-1)(n-1)})[S_i] = [S_i] \pm [\tau^{(2a-1)(n-1)}S_i] = [S_i] \pm [\Sigma^{(1-2a)}S_i] = [S_i] \mp [S_i].
	\]
	Hence,
	\[
	(1 - \phi^{(2a-1)(n-1)})[S_i] = 2[S_i], \qquad (1 + \phi^{(2a-1)(n-1)})[S_i] = 0.
	\]
	And so we find that,
	\[ K_0(C_{n,p}^D) =\left\{
	\begin{array}{ll}
		\mathbb{Z}^n & \text{if\;$n$\;is\;odd},\\[2mm]
		(\mathbb{Z}/2\mathbb{Z})^n & \text{if\;$n$\;is\;even}.
	\end{array}
	\right.
	\]	
\end{proof}

\begin{proposition}\label{DLem: p equiv 1}
	Let  $p \equiv 1 \mod (2(n-1))$. Then
\[ K_0(C_{n,p}^D) =\left\{
\begin{array}{ll}
\mathbb{Z} & \text{if\;$n$\;is\;odd},\\[2mm]
		\mathbb{Z}^2 & \text{if\;$n$\;is\;even}.
\end{array}
\right.
\]	
\end{proposition}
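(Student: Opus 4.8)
The plan is to reduce to the case $p=1$ and then compute $\mathrm{Coker}(1+\phi)$ explicitly on the standard basis $\{[S_0],[S_1],\ldots,[S_{n-1}]\}$ of $K_0(D^b(\mod(D_n)))$. Since $2(n-1)$ is even and $p\equiv 1\bmod 2(n-1)$, the integer $p$ is odd, so $K_0(C_{n,p}^D)\cong\mathrm{Coker}(1+\phi^p)$; by Proposition~\ref{DLem: p,q equiv} we may take $q=1$, and it suffices to determine $K_0(C_{n,1}^D)\cong K_0(D^b(\mod(D_n)))/\mathrm{Im}(1+\phi)$. (This already reduces the statement to Lemma~\ref{36}, but I would give the intrinsic Coxeter computation, as in the type $A_n$ case of Lemma~\ref{Lem: n odd, p equiv 1}.)

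Next I would evaluate $1+\phi$ on each basis vector. For $i\in\{0,1\}$, Lemma~\ref{simple1} with $p=1$ gives $[\tau S_i]=[S_2]+[S_{i+1}]$ (index mod $2$), so $(1+\phi)[S_0]$ and $(1+\phi)[S_1]$ are both equal to $[S_0]+[S_1]+[S_2]$. For $2\le i\le n-2$, writing $S_i=(n-i,n-1)$ and applying Lemma~\ref{simple} to $(n-i-1,n-1)$ gives $\tau S_i\cong S_{i+1}$, hence $(1+\phi)[S_i]=[S_i]+[S_{i+1}]$. Finally $S_{n-1}=(1,n-1)=P_{n-1}$, so $\tau S_{n-1}=(0,n-1)$, and the case $i=n-1$ of Lemma~\ref{simple} (with $t=-1$, equivalently $(0,n-1)=\Sigma^{-1}I_{n-1}$) yields $[\tau S_{n-1}]=-\sum_{j=0}^{n-1}[S_j]$, so $(1+\phi)[S_{n-1}]=-\sum_{j=0}^{n-2}[S_j]$. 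Therefore
\[
K_0(C_{n,1}^D)\cong\big\langle\,[S_0],\ldots,[S_{n-1}]\ \big|\ [S_0]+[S_1]+[S_2],\ \ [S_j]+[S_{j+1}]\ (2\le j\le n-2),\ \ \textstyle\sum_{j=0}^{n-2}[S_j]\,\big\rangle.
\]

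Then I would simplify this presentation and split on the parity of $n$. The relations $[S_j]+[S_{j+1}]=0$ give $[S_j]=(-1)^j[S_2]$ for $2\le j\le n-1$, and $[S_0]+[S_1]=-[S_2]$; substituting into the last relation gives $\sum_{j=0}^{n-2}[S_j]=\big(-1+\sum_{j=2}^{n-2}(-1)^j\big)[S_2]$. The alternating sum $\sum_{j=2}^{n-2}(-1)^j$ has $n-3$ terms, hence equals $0$ if $n$ is odd and $1$ if $n$ is even. So when $n$ is odd the last relation becomes $[S_2]=0$, which kills every $[S_j]$ with $j\ge 2$ and leaves $K_0(C_{n,p}^D)=\langle[S_0]\rangle\cong\mathbb{Z}$; when $n$ is even it is vacuous, and eliminating $[S_1]$ together with $[S_3],\ldots,[S_{n-1}]$ leaves $K_0(C_{n,p}^D)\cong\mathbb{Z}^2$, free on $[S_0]$ and $[S_2]$ (no torsion, since in both cases the relation matrix reduces to the diagonal form just described).

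The only delicate point is the boundary term $[\tau S_{n-1}]=-\sum_j[S_j]$: one must correctly transport the projective $P_{n-1}=(1,n-1)$ across the fundamental domain via $\tau^{n-1}\cong\Sigma^{-1}$. Once that term is in place, the parity dichotomy of the answer drops straight out of the telescoping alternating sum above, and the rest is routine bookkeeping in the free abelian group $\mathbb{Z}^n$.
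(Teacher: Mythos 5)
Your proposal is correct and follows essentially the same route as the paper: identify $K_0(\C_{n,p}^D)$ with $\operatorname{Coker}(1+\phi^p)$, use Lemmas \ref{simple} and \ref{simple1} to obtain exactly the relation set $\{[S_0]+[S_1]+[S_2],\ [S_i]+[S_{i+1}]\ (2\le i\le n-2),\ \sum_{i=0}^{n-2}[S_i]\}$, and then simplify by the parity of $n$. The only difference is that you make explicit the reduction to $p=1$ via Proposition \ref{DLem: p,q equiv} and spell out the evaluation of $1+\phi$ on each basis vector (including the boundary term $[\tau S_{n-1}]=-\sum_j[S_j]$), which the paper leaves implicit.
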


\begin{proof}
Note that $p$ is odd in this case. By Lemma \ref{simple} and Lemma \ref{simple1}, we have the set of relations
	\[
	\{[S_2] + [S_3], [S_3] + [S_4], \ldots, [S_{n-2}] + [S_{n-1}], \sum_{i=0}^{n-2} [S_i], [S_1]+[S_2]+[S_0]\}.
	\]
	The first $n-3$ relations in the above set imply that $[S_i] = (-1)^{i}[S_2]$ in $K_0(C_{n,p}^D)$ for $3\leq i\leq n-1$.

	 For the last 2 relations in the above set, we have the following two cases:
\begin{itemize}
  \item [(1)] If $n$ is odd, then the last 2 relations in the above set imply that $[S_0]+[S_1]=0$ and $[S_1]+[S_2]+[S_0]=0$. This means
  $$K_0(C_{n,p}^D)=\langle[S_1]\rangle\cong\mathbb{Z}.$$
  \item [(2)]If $n$ is even, then the last 2 relations in the above set imply that $[S_0]+[S_1]+[S_{n-2}]=0$ and $[S_1]+[S_2]+[S_0]=0$. This means
  $$K_0(C_{n,p}^D)=\langle[S_1],[S_0]\rangle\cong\mathbb{Z}^2.$$
\end{itemize}
\end{proof}

Note that when $p=1$, we recover the Grothendieck group of the classical cluster category of type $D_n$. This is a result originally due to Barot, Kussin and Lenzing \cite{BKL}, see Lemma \ref{36}.
\vspace{3mm}

\section*{Acknowledgement}
We would like to thank the referees for carefully reading our manuscript and for pointing out problems in the previous version, as well as for their helpful suggestions.
\vspace{3mm}

\hspace{-4mm}\textbf{Data Availability}\hspace{2mm} Data sharing not applicable to this article as no datasets were generated or analysed during
the current study.
\vspace{2mm}

\hspace{-4mm}\textbf{Conflict of Interests}\hspace{2mm} The authors declare that they have no conflicts of interest to this work.

\end{document}